\numberwithin{equation}{section}
\newtheorem{theorem}{Theorem}[section]
\newtheorem{lemma}[theorem]{Lemma}
\newtheorem{proposition}[theorem]{Proposition}
\theoremstyle{definition}
\newtheorem{definition}[theorem]{Definition}
\newtheorem{remark}[theorem]{Remark}
\newcommand{\R}{\mathbb{R}}
\newcommand{\C}{\mathbf{C}}
\newcommand{\be}{\begin{equation}}
\newcommand{\ee}{\end{equation}}
\newcommand{\dist}{\operatorname{dist}}
\newcommand{\n}{\mathsf{n}}
\def \C{\mathcal{C}_{MST}}
\def\EE{\mathbb{E}}
\newcommand{\bra}[1]{\left( #1 \right)}
\newcommand{\sqa}[1]{\left[ #1 \right]}
\newcommand{\cur}[1]{\left\{ #1 \right\}}
\newcommand{\abs}[1]{\left| #1 \right|}
\newcommand{\T}{\mathbb{T}^d}
\title{On Minimum Spanning Trees for Random Euclidean Bipartite Graphs}
\thanks{D.T.\ was partially supported by Gnampa project 2020 ``Problemi di ottimizzazione con vincoli via trasporto ottimo e incertezza''.}
\author[M. Correddu]{Mario Correddu}
\address{M.C.: Dipartimento di Matematica, Università degli Studi di Pisa, 56125 Pisa, Italy  }
\email{m.correddu@studenti.unipi.it}
\author[D. Trevisan]{Dario Trevisan}
\address{D.T.: Dipartimento di Matematica, Università degli Studi di Pisa, 56125 Pisa, Italy  }
\email{dario.trevisan@unipi.it}
\date{\today}
\keywords{Euclidean functionals, minimum spanning tree, geometric probability}
\begin{document}

\begin{abstract}
We consider the minimum spanning tree problem on a weighted complete bipartite graph $K_{n_R, n_B}$ whose $n=n_R+n_B$ vertices are random, i.i.d.~uniformly distributed points in the unit cube in $d$ dimensions and edge weights are the $p$-th power of their Euclidean distance, with $p>0$. In the large $n$ limit with $n_R/n \to \alpha_R$ and  $0<\alpha_R<1$, we show that the maximum vertex degree of the tree grows logarithmically, in contrast with the classical, non-bipartite, case, where a uniform bound holds depending on $d$ only. Despite this difference, for $p<d$, we are able to prove that the total edge costs normalized by the rate $n^{1-p/d}$ converge to a limiting constant that can be represented as a series of integrals, thus extending a classical result of Avram and Bertsimas to the bipartite case and confirming a conjecture of Riva, Caracciolo and Malatesta.
\end{abstract}
\maketitle

\section{Introduction}

The minimum spanning tree (MST) problem ranks among the simplest Combinatorial Optimization problems, with many applications, well beyond its historical introduction for network design \cite{graham_history_1985} including approximation algorithms for more complex problems \cite{christofides1976worst, kou_fast_1981} and cluster analysis \cite{asano_clustering_1988}. %

Its formulation is straightforward: given a weighted undirected graph $G=(V,E,w)$ with $w : E \to (0, \infty)$, find a sub-graph $T\subseteq E$ that connects all nodes $V$ and has a minimal total edge cost
\[ \sum_{e \in T} w(e),\]
thus defining the MST cost functional $\C(G)$. 
Minimality yields that redundant connections can be discarded, so that the resulting sub-graph $T$ turns out to be a tree, i.e., connected and without cycles. Several algorithms have been proposed for its solution, from classical greedy to more efficient ones \cite{chazelle2000minimum}, possibly randomized \cite{karger_randomized_1995}.

Despite its apparent simplicity, a probabilistic analysis of the problem, i.e., assuming that weights are random variables with a given joint law and studying the resulting random costs and MST's yields interesting results. Moreover, it may suggest mathematical tools to deal with more complex problems, such as the Steiner tree problem or the travelling salesperson problem, where one searches instead for a cycle connecting all points having minimum total edge weight.

The most investigated random model is surely that of i.i.d.~weights with a regular density, as first studied by Frieze \cite{frieze_value_1985}, who showed in particular the following law of large numbers: if $G^n= (V^n, E^n, w^n)$, with $(V^n,E^n)= K_n$ the complete graph over $n$ nodes and $w^n = \bra{w_{ij}}_{i,j =1}^n$ are independent and uniformly distributed on $[0,1]$, then almost surely
\begin{equation}\label{eq:frieze} \lim_{n\to \infty}   \C(G^n) = \zeta(3) = \sum_{k=1}^\infty \frac{1}{k^3}.\end{equation}

Another well studied setting is provided by Euclidean models, where nodes are i.i.d.\ sampled points in a region (say uniformly on a cube $[0,1]^d \subseteq \R^d$, for simplicity) and edge weights are functions of their distance, e.g. $w(x,y) = |x-y|^p$ for some parameter $p>0$. This setting dates back at least to the seminal paper by Beardwood, Halton and Hammersley \cite{beardwood_shortest_1959} where they focused on the travelling salesperson problem, but stated that other problems may be as well considered, including the MST one. A full analysis was later performed by Steele \cite{steele_growth_1988} who proved that, if the Euclidean graph consists of $n$ nodes, then for every $0<p<d$, almost sure convergence holds 
\begin{equation}\label{eq:steele} \lim_{n\to \infty} \frac{ \C(G^n)}{n^{1-p/d}} =  \beta_{MST}(p,d),\end{equation}
where $\beta_{MST}(p,d) \in (0,\infty)$ is a constant. The rate $n^{1-p/d}$ is intuitively clear due to the fact that there are $n-1$ edges a tree over $n$ points and the typical distance between two adjacent points is expected to be of order $n^{-1/d}$. The constraint $p<d$ was removed by Aldous and Steele \cite{aldous_asymptotics_1992} and Yukich  \cite{yukich_asymptotics_2000}, so that convergence holds in fact for any $p>0$. This result can be seen as an application of a general Euclidean additive functional theory \cite{steele_subadditive_1981, yukich_probability_1998}. However, such general methods that work for other combinatorial optimization problems give not much insight on the precise value of the limit constant $\beta_{MST}(p,d)$. The MST problem is known to be exceptional, for a (sort of) explicit series representation, analogue  to \eqref{eq:frieze}, was obtained by Avram and Bertismas \cite{avram_minimum_1992}, although only in the range $0<p<d$. The latter was used by Penrose \cite{penrose_random_1996}, in connection with continuum percolation, to study, among other things, the MST in the high dimensional regime $d\to \infty$. An alternative approach towards explicit formulas was proposed by Steele \cite{steele_minimal_2002}, but limited to the case of i.i.d.\ weights, based on Tutte polynomials.

\vspace{1em}

 Aim of this paper is to investigate analogous results for \emph{bipartite} Euclidean random models, i.e., when nodes correspond to two distinct families of sampled points (e.g., visually rendered by red/blue colourings) and weights, still  given by a power of the distance, are only defined between points with different colours. Formally, we replace the underlying complete graph $K_n$ with a complete bipartite graph $K_{n_R, n_B}$ with $n_R+n_B=n$. 

A similar question was formulated and essentially solved in the model with independent weights by Frieze and McDiarmid \cite{frieze_random_1989}. In Euclidean models, however, it is known that such innocent looking variant may in fact cause quantitative differences in the corresponding asymptotic results. For example, in the Euclidean bipartite travelling salesperson problem with $d=1$ and $d=2$, the correct asymptotic rates (for $p=1$) are known to be respectively $\sqrt{n}$ \cite{caracciolo_solution_2018} and $\sqrt{n \log n }$ \cite{capelli_exact_2018}, larger than the natural $n^{1-1/d}$ for the non-bipartite problem.  Similar results are known for other problems, such as the minimum matching problem \cite{steele_subadditive_1981} and its bipartite counterpart, also related to the optimal transport problem \cite{ajtai_optimal_1984, talagrand_ajtai-komlos-tusnady_1992, talagrand_upper_2014, ambrosio_pde_2019, caracciolo_scaling_2014}. Barthe and Bordenave proposed a bipartite extension of the Euclidean additive functional theory \cite{barthe_combinatorial_2013} that allows to recover an analogue of \eqref{eq:steele} for many relevant combinatorial optimization problems on bipartite Euclidean random models, although its range of applicability is restricted to $0<p<d/2$ (the cases $p=1$, $d\in\cur{1,2}$ are indeed outside this range) and anyway the MST problem does not fit in the theory. The main reason for the latter limitation is that there is no uniform bound on the maximum degree of a MST on a bipartite Euclidean random graph -- their theory instead applies to a variant of the problem where a uniform bound on the maximum degree is imposed, which is in fact algorithmically more complex (if the bound is two it recovers essentially the travelling salesperson problem). 
 
 \subsection*{Main results}
 Our first main result describes precisely the asymptotic maximum degree of a MST on a bipartite Euclidean random graph, showing that it grows logarithmically in the total number of nodes, in the asymptotic regime where a fraction of points $\alpha_R \in (0,1)$ is red and the remaining $\alpha_B = 1-\alpha_R$ is blue.

\begin{theorem}\label{thm:main-degree}
Let $d\ge 1$, let  $n \ge 1$ and  $R^n = \bra{X_i}_{i=1}^{n_R}$, $B^n= \bra{Y_i}_{i=1}^{n_B}$ be (jointly) i.i.d.~uniformly distributed on $[0,1]^d$ with $n_R+n_B = n$ and
\[ \lim_{n \to \infty} \frac{n_R}{n} = \alpha_R \in (0,1), \quad \lim_{n \to \infty} \frac{n_B}{n} = \alpha_B = 1 -\alpha_R.\]
Let $T^n$ denote the MST over the complete bipartite graph with independent sets $R^n$, $B^n$ and weights $w(X_i,Y_j) = |X_i-Y_j|$, and let $\Delta(T^n)$ denote its maximum vertex degree. Then, there exists a constant $C= C(d,\alpha_R)>0$ such that
\[ \lim_{ n \to \infty} P\bra{ C^{-1} < \frac{\Delta(T^n)}{\log(n)} < C} = 1.\]
\end{theorem}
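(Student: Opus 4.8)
The plan is to prove the upper and lower bounds separately, both resting on the classical cut characterization of the MST. Since the combinatorial structure of a minimum spanning tree depends only on the ranking of the edge weights, $T^n$ (and hence $\Delta(T^n)$) is the same whether one uses $|X_i-Y_j|$ or $|X_i-Y_j|^p$, so I work with distances. Almost surely all pairwise distances are distinct, so $T^n$ is the unique minimum spanning tree and obeys the cut property in the form: a red--blue pair $\{x,y\}$ belongs to $T^n$ if and only if $x$ and $y$ lie in distinct connected components of the graph $G_{<t}$ on $R^n\cup B^n$ whose edges are exactly the red--blue pairs at distance $<t$, evaluated at $t=|x-y|$. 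I also fix $\rho_0:=(C_0\log n/n)^{1/d}$ and work on a regularity event $\mathcal{E}_n$ on which every ball of radius $\rho_0$ centred in $[0,1]^d$ contains at least one red point, at least one blue point, and at most $C_1\log n$ points in total; a union bound over an $O(n)$-point grid together with binomial tail estimates gives $P(\mathcal{E}_n)\to 1$ once $C_0=C_0(d,\alpha_R)$ is large enough.

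\emph{Upper bound.} Argue on $\mathcal{E}_n$ and bound $\deg_{T^n}(y)$ for a fixed blue vertex $y$, red vertices being symmetric. Partition the red $T^n$-neighbours of $y$ into dyadic shells, the $m$-th consisting of those $x$ with $|x-y|\in(2^{-m-1},2^{-m}]$. The key step is a packing argument: the $m$-th shell is covered by $c_d=c_d(d)$ cells (a bounded number of radial sub-bands times a bounded number of spherical caps) such that any two neighbours $x_a,x_b$ in the same cell satisfy $|x_a-x_b|<\tfrac12\,2^{-m-1}$; writing $r_b:=|y-x_b|\ge|y-x_a|$, the intersection $B(x_a,r_b)\cap B(x_b,r_b)$ then contains the ball of radius $\tfrac34\,2^{-m-1}$ about $\tfrac12(x_a+x_b)$. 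If that ball contained a blue point $w$, then $x_a-w-x_b$ would be a path in $G_{<r_b}$ joining $x_a$ --- hence $y$, via the $T^n$-edge $\{y,x_a\}$ of length $<r_b$ --- to $x_b$, contradicting $\{y,x_b\}\in T^n$; so the ball is empty of blue points, which on $\mathcal{E}_n$ forces $2^{-m}=O(\rho_0)$. Consequently, on $\mathcal{E}_n$, at every shell with $2^{-m}\gtrsim\rho_0$ the vertex $y$ has at most $c_d$ red neighbours; there are only $O(\log n)$ such shells, and the remaining red neighbours of $y$ all lie in $B(y,O(\rho_0))$, hence number at most $O(\log n)$. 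So $\deg_{T^n}(y)\le C\log n$, and a union bound over the $n$ vertices gives $\Delta(T^n)\le C\log n$ with probability tending to $1$.

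\emph{Lower bound.} Fix a small $\lambda=\lambda(d,\alpha_R)\in(0,1/(\omega_d\alpha_R))$, where $\omega_d=|B(0,1)|$, put $r:=(\lambda\log n/n)^{1/d}$ and $c:=\tfrac12\alpha_B\omega_d\lambda/2^d$, and call a red point $X_i$ \emph{good} if $B(X_i,r)$ contains no red point other than $X_i$ while $B(X_i,r/2)$ contains at least $c\log n$ blue points. If $X_i$ is good, then by the triangle inequality $X_i$ is the nearest red point to each of those blue points, so each corresponding edge lies in $T^n$ by the cut property applied to the singleton cut at that blue point, whence $\deg_{T^n}(X_i)\ge c\log n$. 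It thus suffices to show that with probability tending to $1$ some red point well inside the cube is good. Conditioning on the location of a red point, the event that its $r$-ball contains no further red point has probability $\approx n^{-\omega_d\lambda\alpha_R}$ and is independent of the number of blue points in the concentric $(r/2)$-ball, which is binomial with mean $\asymp\alpha_B\omega_d\lambda\log n=2c\log n$, so the good event has probability $\gtrsim n^{-\omega_d\lambda\alpha_R}$. Letting $Z$ count the good red points in the bulk, $\EE[Z]\gtrsim n^{1-\omega_d\lambda\alpha_R}\to\infty$. For the second moment, pairs of red points at mutual distance $>2r$ contribute to $\EE[Z^2]$ essentially as in the independent case, while the expected number of pairs within distance $2r$ is $O(n\log n)$, so their total contribution is $O(n\log n\cdot n^{-\omega_d\lambda\alpha_R})=o(\EE[Z]^2)$ because $\omega_d\lambda\alpha_R<1$; hence $\EE[Z^2]=(1+o(1))\EE[Z]^2$, and Chebyshev's inequality gives $P(Z\ge 1)\to 1$, so $\Delta(T^n)\ge c\log n$ with probability tending to $1$.

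The main obstacle, and the genuinely new ingredient compared with the classical non-bipartite degree bound, is the packing step in the upper bound. In the non-bipartite case two MST-edges at a common vertex must subtend an angle of at least $60^\circ$ via a direct edge swap, giving a degree bound depending only on $d$; here the two far endpoints $x_a,x_b$ share a colour and cannot be joined by an edge, so the obstruction to a short reconnecting path has to be manufactured from the points of the opposite colour near the vertex --- and this is precisely what ceases to be available below scale $\rho_0$, which is what lets the degree grow, and only logarithmically. Making the heuristics above rigorous --- in particular securing uniformity over all vertices and all dyadic scales on a single good event, and carrying the constants through the second-moment estimate --- is routine but needs some care.
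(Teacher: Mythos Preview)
Your argument is correct. The lower bound is close to the paper's: both locate an ``isolated'' red point and use the cut property (Lemma~\ref{lem:voronoi}) to force all nearby blues to attach to it. The paper invokes Proposition~\ref{prop:max-min} to exhibit a red point whose nearest red neighbour is at distance $\gtrsim(\log n/n)^{1/d}$ and then bounds the blue count in a small cube about it; you instead define ``good'' red points directly and run a second-moment computation on their number. These are interchangeable.

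The upper bound, however, follows a genuinely different route. The paper argues by contradiction through the empty-region Lemma~\ref{lem:empty-cone}: on a regularity event controlling $\dist(R^n,B^n)$ and the cube occupancy counts (Lemma~\ref{lem:cubes-filled}), a vertex of degree exceeding $\lambda^d\log n$ must have a $T^n$-edge longer than $\dist(R^n,B^n)$, which forces a cube of volume $\asymp\log n/n$ to be empty of one colour---impossible on the regularity event. Your proof instead stratifies the $T^n$-neighbours of a fixed vertex into dyadic distance shells and shows, via a packing into $c_d$ small cells and the cycle property, that two neighbours in the same cell would create a ball of radius $\asymp 2^{-m}$ devoid of blue points; on $\mathcal{E}_n$ this is impossible above scale $\rho_0$, so each of the $O(\log n)$ coarse shells contributes at most $c_d$ neighbours, while the $O(\rho_0)$-neighbourhood contributes $O(\log n)$ by the occupancy bound. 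The paper's route is shorter, resting on a single geometric lemma; yours yields extra structural information---namely that at every scale above $(\log n/n)^{1/d}$ a vertex has only $O_d(1)$ tree-neighbours, and the entire logarithmic degree is concentrated at the bottom scale. Two small remarks: the ``union bound over the $n$ vertices'' in your upper bound is unnecessary, since the shell argument is already deterministic on $\mathcal{E}_n$ and uniform in the vertex; and you should check that the midpoint $\tfrac12(x_a+x_b)$ lies in $[0,1]^d$ (it does, by convexity) so that the regularity event applies to the ball about it.
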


(Indeed, the structure of the MST does not depend on the specific choice of the exponent $p>0$, so we simply let $p=1$ above). The proof is detailed in Section~\ref{sec:main-degree}.

Our second main result shows that, although the general theory of Barthe and Bordenave does not apply and the maximum degree indeed grows, the total weight cost for the bipartite Euclidean MST problem turns out to be much closer to the non-bipartite one, since no exceptional rates appear  in low dimensions. Before we give the complete statement, let us introduce the following quantity, for $d \ge 1$, $k_R$, $k_B \ge 1$, 
$\alpha_R \in (0,1)$,
\[ \begin{split} E(k_R, k_B, \alpha_R) = & \int_{\Theta(k_R, k_B)} \bra{ \alpha_R |D(\cur{b_j}_{j=1}^{k_B})| + \alpha_B | D(\cur{r_i}_{i=1}^{k_R})|}^{-(k_R+k_B)/d}\cdot  \\
& \quad \cdot \bra{  \frac{k_R}{\alpha_R} \delta_0(r_1)d b_1 +  \frac{k_B}{\alpha_B}d r_1 \delta_0(b_1)} dr_2 \ldots d r_{k_R} db_2 \ldots d b_{k_B},\end{split} \]
where  $\alpha_B = 1-\alpha_R$ and we write 
\begin{equation}\label{eq:def-theta} \Theta(k_R, k_B) \subseteq (\R^d)^{k_R} \times (\R^d)^{k_B},\end{equation}
 for the set of (ordered) points $( \bra{r_i}_{i=1}^{k_R} , \bra{b_j}_{j=1}^{k_B})$ such that, in the associated Euclidean bipartite graph with weights  $\bra{ |r_i-b_j|}_{i,j}$, the subgraph with all edges having weight less than $1$ is connected (or equivalently, there exists a bipartite Euclidean spanning tree having all edges with length weight less than $1$), and for a set $A \subseteq \R^d$, we write
 \begin{equation}
 \label{eq:def-da}  D(A) = \cur{x \in \R^d\, : \, \dist(A,x)  \le 1},\end{equation}
 and $|D(A)|$ for its Lebesgue measure. Notice also that the overall integration is performed with respect to Lebesgue measure over $k_R+k_B-1$ variables in $\R^d$ and one (either $r_1$ or $b_1$) is instead with respect to a Dirac measure at $0$.

These quantities enter in the explicit formula for the limit constant in the bipartite analogue of \eqref{eq:steele}, as our second main result shows.

\begin{theorem}\label{thm:main-bipartite-euclidean}
Let $d\ge 1$, let  $n \ge 1$ and  $R^n = \bra{X_i}_{i=1}^{n_R}$, $B^n= \bra{Y_i}_{i=1}^{n_B}$ be (jointly) i.i.d.~uniformly distributed on $[0,1]^d$ with $n_R+n_B = n$ and
\[ \lim_{n \to \infty} \frac{n_R}{n} = \alpha_R \in (0,1), \quad \lim_{n \to \infty} \frac{n_B}{n} = \alpha_B = 1 -\alpha_R.\]
Let $T^n$ denote the MST over the complete bipartite graph with independent sets $R^n$, $B^n$ and weights $w(X_i,Y_j) = |X_i-Y_j|$. Then, for every $p \in (0, d)$, the following convergence holds
\begin{equation}\label{eq:lln-bip} \lim_{n \to \infty} \frac{ \EE\sqa{ \sum_{\cur{X_i,Y_j} \in T^n} |X_i - Y_j|^p}}{n^{1-p/d}} = \beta_{bMST}(d,p),\end{equation}
and  the constant $\beta_{bMST}(d,p) \in (0, \infty)$ is given by  the series
\begin{equation}\label{eq:bip-ab} \beta_{bMST}(d,p) = \frac{p}{d}\sum_{k_R,k_B=1}^{\infty} \frac{\alpha_R^{k_R}}{k_R! }\frac{ \alpha_B^{k_B}}{k_B!} \frac{\Gamma((k_R+k_B)/d)}{k_B+k_R} E(k_R, k_B, \alpha_R).\end{equation}
Moreover, if $p<d/2$ for $d \in \cur{1,2}$ or $p<d$ for  $d \ge 3$, convergence is almost sure:
\[ \lim_{n \to \infty} \frac{  \sum_{\cur{X_i,Y_j} \in T^n} |X_i - Y_j|^p}{n^{1-p/d}} = \beta_{bMST}(d,p).\]
\end{theorem}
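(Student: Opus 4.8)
The plan is to follow the strategy of Avram and Bertsimas, adapted to the bipartite setting: express the total cost through the connected components of the associated random geometric graph at varying thresholds, and then pass to a Poissonized, infinite-volume limit. The starting point is a deterministic identity. If $T$ is a minimum spanning tree of a (connected, finitely weighted) graph and $N(t)$ is the number of components of the subgraph keeping only edges of weight $\le t$, then by Kruskal's algorithm the number of MST edges of weight $>t$ equals $N(t)-1$; combined with $w^p=\int_0^\infty p\,t^{p-1}\mathbf 1[w>t]\,dt$ this gives
\[ \sum_{e\in T} w(e)^p=\int_0^\infty p\,t^{p-1}\bigl(N(t)-1\bigr)\,dt. \]
Applied to $T^n$ with $w(X_i,Y_j)=|X_i-Y_j|$ and after taking expectations, $\EE\bigl[\sum_{\{X_i,Y_j\}\in T^n}|X_i-Y_j|^p\bigr]=\int_0^\infty p\,t^{p-1}\EE[N^n(t)-1]\,dt$, where $N^n(t)$ counts the components of the bipartite random geometric graph on $R^n\cup B^n$ with edges $\{X_i,Y_j\}$ of length $\le t$. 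For the series representation it is convenient to rewrite this by charging each MST edge to the unique component-shape it creates (its internal ``bottleneck''), which automatically restricts attention to component-shapes having at least one red and at least one blue vertex, matching the summation range in \eqref{eq:bip-ab}.

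The relevant window is $t\asymp n^{-1/d}$. Substituting $t=u\,n^{-1/d}$ yields
\[ \frac{\EE\bigl[\sum_{\{X_i,Y_j\}\in T^n}|X_i-Y_j|^p\bigr]}{n^{1-p/d}}=\int_0^\infty p\,u^{p-1}\,\frac{\EE[N^n(u\,n^{-1/d})-1]}{n}\,du, \]
and for fixed $u$ one expects $\EE[N^n(u\,n^{-1/d})-1]/n\to g(u)$, the density of components of the infinite bipartite model made of two independent Poisson processes of intensities $\alpha_R$ and $\alpha_B$ on $\R^d$ with the same connection rule at threshold $u$. Writing $N^n(t)=\sum_v |C_v(t)|^{-1}$ for $C_v(t)$ the component of $v$, decomposing $g(u)$ by the type $(k_R,k_B)$ of the component, and applying the multivariate Mecke formula to the two Poisson processes, each type contributes $\tfrac{\alpha_R^{k_R}\alpha_B^{k_B}}{k_R!\,k_B!}$ times an integral over connected configurations (with one vertex fixed) of a ``void'' factor $\exp\!\bigl(-\alpha_R|D_u(\{b_j\})|-\alpha_B|D_u(\{r_i\})|\bigr)$. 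Rescaling by $u$ — equivalently, polar-type coordinates adapted to the homogeneous function $\mathrm{conn}$ — turns the $t$-integration into a Gamma integral: the factor $\Gamma$ arises from $\int_0^\infty t^{\,\cdot\,-1}e^{-t^dM}\,dt$, the power of $M=\alpha_R|D(\{b_j\})|+\alpha_B|D(\{r_i\})|$ from the matching homogeneity, and the combinatorial weights (including the per-vertex factor $1/(k_R+k_B)$ and the symmetrised pinning $\tfrac{k_R}{\alpha_R}\delta_0(r_1)+\tfrac{k_B}{\alpha_B}\delta_0(b_1)$) from Mecke's formula together with the ``charge to a vertex'' bookkeeping; this produces precisely the $E(k_R,k_B,\alpha_R)$ and the series \eqref{eq:bip-ab}.

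I expect the main obstacle to be the interchange
\[ \lim_{n\to\infty}\int_0^\infty p\,u^{p-1}\,\frac{\EE[N^n(u\,n^{-1/d})-1]}{n}\,du=\int_0^\infty p\,u^{p-1}g(u)\,du . \]
On compact $u$-ranges this is dominated convergence with the trivial bound $N^n/n\le 1$ and $u^{p-1}$ integrable near $0$ since $p>0$. The delicate regime is large $u$, i.e.\ the mesoscopic and macroscopic scales $n^{-1/d}\ll t\lesssim\sqrt d$: one needs a bound of the form $\EE[N^n(t)-1]/n\lesssim e^{-c\,n t^d}$, uniformly in $n$, for $t$ up to a fixed constant, which amounts to showing that once the scale exceeds the percolation window almost every vertex already lies in a single giant component and that being in a small component costs a void of radius $\asymp t$; for $t$ of constant order the graph is connected except with probability exponential in $n$. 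Integrating these estimates is where the hypothesis $p<d$ is used, as it makes $\int^\infty u^{p-1}e^{-cu^d}\,du$ and the macroscopic remainder negligible after dividing by $n^{1-p/d}$, and it is also the range in which the series defining $\beta_{bMST}(d,p)$ converges. Quantitative control of the component structure of the bipartite random geometric graph across all scales, uniformly in $n$, is the technical heart: it must be done essentially by hand, since the Barthe--Bordenave functional machinery does not apply and there is no uniform bound on the maximum degree (Theorem~\ref{thm:main-degree}). Finiteness and strict positivity of $\beta_{bMST}(d,p)$ come along the way, the lower bound being clear already from, e.g., the contribution of nearest-neighbour edges.

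Finally, given convergence of the mean, almost sure convergence follows by showing that $\sum_{\{X_i,Y_j\}\in T^n}|X_i-Y_j|^p$ concentrates around it at scale $o(n^{1-p/d})$: a bounded-differences / Efron--Stein estimate for the effect of resampling a single point, combined with a Borel--Cantelli argument along a subsequence $n_k$ and interpolation between consecutive values. The restriction to $p<d/2$ when $d\in\{1,2\}$, versus $p<d$ for $d\ge3$, reflects the anomalously large fluctuations of bipartite Euclidean optimization problems in low dimension, in the spirit of the Ajtai--Komlós--Tusnády phenomenon for bipartite matching and optimal transport, which is exactly the range in which the required variance bound can be proved.
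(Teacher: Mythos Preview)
Your overall strategy matches the paper's: the Avram--Bertsimas integral representation via component counts, rescaling $t=un^{-1/d}$ to a Poisson/infinite-volume limit, and a concentration argument for a.s.\ convergence. But three technical points are handled quite differently in the paper, and at least one of them is a genuine gap in your sketch.

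\textbf{Torus first, then cube.} The paper does not attack $[0,1]^d$ directly. It first proves convergence of the expected cost and the series formula on the flat torus $\T^d$ (Theorem~\ref{formula-torus}), where translation invariance lets one pin $r_1=0$ without boundary bias and the lifting to $\R^d$ is exact for small thresholds. Only afterwards does it compare torus to cube (Lemma~\ref{lem:upper-cube-torus-fat-boundary} and Proposition~\ref{torus-equals-cube}), showing the two expected costs differ by $O\bigl(n^{1-(p+1)/d}(\log n)^{(p+1)/d}\bigr)$. Your direct Mecke computation on the cube would have to track boundary corrections through every limit; the paper sidesteps this.

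\textbf{Tail control.} Your proposed percolation-style bound $\EE[N^n(t)-1]/n\lesssim e^{-cnt^d}$ is not what the paper does, and you yourself flag it as the unproven ``technical heart''. The paper instead truncates by component size: with $C_k^n(z)$ the number of components of size $\ge k$, Lemma~\ref{lem:prim} bounds $\int_0^\infty(\EE[C_k^n(z)]-1)\,dz$ by the expected MST cost of a coarsened bipartite graph on at most $n/k$ super-vertices, which Lemma~\ref{lemma:boundbi}, \eqref{eq:linfty-bound} and \eqref{eq:min-moment-p} then control by $O((n/k)^{1-p/d})$. Sending $k\to\infty$ after $n\to\infty$ verifies assumption~\ref{ab-hp-2}) of Theorem~\ref{theoBert} without any continuum-percolation input.

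\textbf{Concentration.} Here there is an actual gap. Resampling one point can alter the MST cost by roughly $\Delta(T^n)\cdot(\text{max edge length})^p$, and the maximum degree is \emph{not} uniformly bounded. The paper uses Theorem~\ref{thm:main-degree} (the $\Delta(T^n)=O(\log n)$ bound) explicitly: for $p<1$ it feeds $g_i=\Delta(T^n)d^{p/2}$ into a custom $L^q$ McDiarmid inequality (Lemma~\ref{lem:mcdiarmid-sobolev}); for $p\ge1$ it combines the degree bound with the bottleneck estimate $\C^\infty(R^n,B^n)\lesssim(\log n/n)^{1/d}$ from Lemma~\ref{lem:boundexp-bip-any-p} inside a Poincar\'e inequality on the cube. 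The split $p<d/2$ for $d\in\{1,2\}$ versus $p<d$ for $d\ge3$ falls out of these moment bounds (summability of $n^{q(p/d-1/2)}$ or $n^{-q(1/2-1/d)}$ for large $q$), not from an AKT-type heuristic. Your Efron--Stein/bounded-differences argument does not close without invoking the logarithmic degree bound, and no subsequence interpolation is needed once one has complete convergence.
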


The proof is detailed in Section~\ref{sec:avram-berstimas}. The one-dimensional random bipartite Euclidean MST has been recently theoretically investigated in the statistical physics literature by Riva, Caracciolo and Malatesta \cite{caracciolorandom}, together with extensive numerical simulations also in higher dimensions, hinting at the possibility of a non-exceptional rate $n^{1-p/d}$ also for $d=2$. In particular, our result confirms this asymptotic rate in the two dimensional case, with a.s.\ convergence if $p<1$ and just convergence of the expected costs if $1\le p <2$ -- in fact we also have a general upper bound if $p\ge 2$ (Lemma~\ref{lem:boundexp-bip-any-p}).

\subsection*{Further questions and conjectures}

Several extensions of the results contained in this work may be devised, for example by generalizing to $k$-partite models or more general block models, allowing for weights between the same coloured points but possibly with a different function, e.g.\ the same power of the distance function, but multiplied by a different pre-factor according to pair of blocks. An interesting question, also open for the non-bipartite case, is to extend the series representation for the limiting constant to the case $p\ge d$. On the other side, we suspect that additivity techniques may yield convergence in \eqref{eq:lln-bip} also in the range $p \ge d$, without an explicit series, but we leave it for future explorations. A further question, that has no counterpart in the non-bipartite case, is what happens if the laws of different coloured points are different, say with densities $f_R$ and $f_B$ that are regular, uniformly positive and bounded. Assuming that $n_R/n \to 1/2$, a natural conjecture is that the limit holds with \eqref{eq:bip-ab} obtained by replacing $\alpha_R$ and $\alpha_B$  with the ``local'' fraction of points $f_R(x)/2$, $f_R(x)/2$ and then integrating with respect to $x \in [0,1]^d$, i.e.,
\begin{equation}\label{eq:conjecture-ab}  \frac{p}{d}\sum_{k_R,k_B=1}^{\infty}\frac{1}{2^{k_R+k_B}}\int_{[0,1]^d} \frac{f_R^{k_R}}{k_R! }\frac{ f_B^{k_B}}{k_B!} \frac{\Gamma((k_R+k_B)/d)}{k_B+k_R} E(k_R, k_B, f_R/2, f_B/2).\end{equation}
Finally, a central limit theorem is known for MST problem \cite{kesten_central_1996, chatterjee_minimal_2017} and it may be interesting to understand the possible role played by the additional fluctuations introduced in bipartite setting for analogue results.

\subsection*{Structure of the paper}
In Section~\ref{sec:notation} we collect useful notation and properties of general MST's, together with crucial observations in the metric setting (including the Euclidean one) and some useful probabilistic estimates. We try here to keep separate as much as possible probabilistic from deterministic results,  to simplify the exposition. In Section~\ref{sec:main-degree} we prove Theorem~\ref{thm:main-degree} and in Section~\ref{sec:avram-berstimas} we first extend  \cite[Theorem 1]{avram_minimum_1992} to the bipartite case and then apply it in the Euclidean setting. An intermediate step requires to argue on the flat torus $\T$ to exploit further homogeneity. We finally use a concentration result to obtain almost sure convergence: since the vertex degree is not uniformly bounded, the standard inequalities were not sufficient to directly cover the case $p<1$, so we prove a simple variant of McDiarmid inequality in Appendix~\ref{app:mcdiarmid} that we did not find in the literature and may be of independent interest.

\section{Notation and preliminary results}\label{sec:notation}

\subsection{Minimum spanning trees}

Although our focus is on weighted graphs induced by points in the Euclidean space $\R^d$, the following general definition of minimum spanning trees will be useful.

\begin{definition}\label{def:mst-general}
Given a weighted undirected finite graph $G = (V,E, w)$, with $w: E \to [0, \infty]$, the MST cost functional is defined as
\begin{equation}\label{eq:mst-cost} \C(G) = \inf \cur{ \sum_{e \in T} w(e)\, : \, \text{$T \subseteq E$ is a connected spanning subgraph}},\end{equation}
\end{definition}

Here and below, connected is in the sense that only edges with finite weight must be considered. We consider only minimizers $T$ in \eqref{eq:mst-cost} that are trees, i.e., connected and acyclic, otherwise removing the most expensive edge in a cycle would give a competitor with smaller cost (since we assume possibly null weights, there may be other minimizers). The following lemma is a special case of the cut property of minimum spanning trees, but will play a crucial role in several occasions, so we state it here. 

\begin{lemma}\label{lem:voronoi}
Let $G = (V, E, w)$, $v \in V$ and assume that $e \in \operatorname{arg min}\cur{w(f)\, : \, v \in f}$ is unique. Then, $e$ belongs to every minimum spanning tree of $G$.  
\begin{proof}
 Assume that $e$ does not belong to a minimum spanning tree $T$. Addition of $e$ to $T$ induces a cycle that includes  necessarily another edge $f$, with $v \in f$ and by assumption $w(f)>w(e)$. By removing $f$, the cost of the resulting connected graph is strictly smaller that the cost of $T$, a contradiction.
\end{proof}
\end{lemma}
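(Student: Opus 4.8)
The plan is to establish this by the standard exchange argument behind the cut property of minimum spanning trees. I would proceed by contradiction: suppose $T$ is a minimum spanning tree of $G$ with $e \notin T$. As a preliminary remark I may assume $w(e) < \infty$, since if $w(e) = \infty$ then by uniqueness of the arg min $e$ is the only edge incident to $v$ and hence lies in every spanning connected subgraph, making the claim trivial.

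The core of the argument has three short steps. First, since $T$ is a spanning tree and $e \notin T$, the subgraph $T \cup \cur{e}$ contains a (unique) cycle $\sigma$, and because $e$ is incident to $v$ this cycle passes through $v$; therefore $\sigma$ contains a second edge $f \ne e$ that is also incident to $v$. Second, the hypothesis that $e$ is the \emph{unique} minimizer of $w$ among edges incident to $v$ yields $w(f) > w(e)$, in particular $w(f) < \infty$. Third, I perform the exchange: since deleting an edge that lies on a cycle cannot disconnect a graph, $T' := (T \cup \cur{e}) \setminus \cur{f}$ is again a connected spanning subgraph, with cost
\[ \sum_{g \in T'} w(g) = \sum_{g \in T} w(g) + w(e) - w(f) < \sum_{g \in T} w(g) = \C(G), \]
contradicting the minimality of $T$. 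It follows that $e$ belongs to every minimum spanning tree of $G$.

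Since the lemma is elementary, I do not expect a real obstacle. The only points that deserve an explicit line are that the cycle created by adding $e$ genuinely passes through $v$ — which is exactly what guarantees the existence of a competing edge $f$ incident to $v$ — and the degenerate case $w(e) = \infty$; both are dispatched in the opening reduction. Everything else is a routine connectivity fact about trees.
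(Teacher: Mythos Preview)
Your proof is correct and follows essentially the same exchange argument as the paper's proof: assume $e \notin T$, add $e$ to create a cycle through $v$, find a second edge $f$ at $v$ with $w(f)>w(e)$, and swap to contradict minimality. The only difference is that you spell out the degenerate case $w(e)=\infty$ and the connectivity of $T'$ explicitly, which the paper leaves implicit.
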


We write $\n_G(v) \in V$, or simply $\n(v)$ if there are no ambiguities, for the closest node to $v$ in $G$, i.e.,
\[ e = \cur{v, \n(v)} \in \operatorname{arg min}_{w \in E}\cur{w(f)\, : \, v \in f},\]
assuming that such node is unique.

The subgraphs
\[ G(z) := \cur{ e \in E \, : \, w(e) \le z}, \quad \text{for $z \ge 0$,}\]
are strongly related to the minimum spanning tree on $G$, since the execution of Kruskal's algorithm yields the identity, already observed in \cite{avram_minimum_1992},
\begin{equation}\label{eq:ab-general}
 \C(G) =\int_{0}^{\infty}\bra{ C_{G(z)}-1}dz,
\end{equation}
where we write $C_G$ denotes the number of connected components of a graph $G$. Indeed, the function $z \mapsto C_{G(z)}$ is piecewise constant and decreasing from $|V|$ towards $1$ (assuming that all weights are strictly positive and $G$ is connected). Assume for simplicity that all weights $\cur{w_e}_{e \in E}$ are different, so that $z \mapsto C_{G(z)}$ has only unit jumps, on a set $J_{-}$. An integration by parts gives the identity 
\[
\int_{0}^{\infty}\bra{C_{G(z)}-1}dz= 
 \sum_{z\in J_{-}} z. \]
To argue that the right hand side is the cost of a MST, e.g., obtained by Kruskal's algorithm, we may represent the connected components of $G(z)$ as a function of $z$ in a tree-like graph (see Fig.~\ref{fig:prim}): starting with components consisting of single nodes at $z=0$, whenever two components merge (i.e., at values $z \in J_{-}$) we connect the corresponding segments. This yields a (continuous) tree with leaves given by the nodes and a root at $z =\infty$. Since Kruskal's algorithm returns exactly the tree consisting of the edges corresponding to such $z \in J_{-}$, we obtain \eqref{eq:ab-general}. 

\begin{remark}\label{rem:p-dependence-mst}
The construction above also yields that the minimum spanning trees of $G = (V, E, w)$ are also minimum spanning trees associated to the graph $G^\psi = (V,E, \psi\circ w)$, i.e., weights are $\psi(w(e))$ where $\psi$ is an increasing function.  In particular, assuming that $\psi:[0, \infty)$ is strictly increasing with $\psi(0) = 0$, then $G^\psi(z) = G(\psi^{-1}(z))$, hence
\[ \C(G^\psi) = \int_0^\infty \bra{ C_{G^\psi(z)} - 1 } d z = \int_0^\infty \bra{C_{G(u)} -1 }d \psi(u).\]
In particular, choosing $\psi(x) = x^p$ and letting $p \to \infty$, we obtain that any minimum spanning tree $T$ is also a minimum bottleneck spanning tree, i.e., $T$ minimizes the functional
\begin{equation}\label{eq:min-bottleneck}\C^\infty(G) := \inf \cur{ \max_{e \in T}w(e)\, : \, \text{$T \subseteq E$ is a connected spanning subgraph}}.\end{equation}
\end{remark}

A similar argument \cite[Lemma 4]{avram_minimum_1992} yields an upper bound for a similar quantity where $C_{G}$ is replaced with $C_{k,G}$, the number of connected components having at least $k$ nodes.

\begin{lemma}\label{lem:prim}
Let $G = (V, E, w)$ be connected with all distinct weights $(w(e))_{e \in E}$ (if finite) and
 $2 \le k \le |V|$. 
Then, there exists a partition $V = \bigcup_{i=1}^{m} C_i$ such that letting $G_k$ be the graph over the node set $\cur{C_1,\ldots, C_m}$ with weights \[w(C_i,C_j) = \inf \cur{w(e) \, : \, e = \cur{x,y} \in V, x\in C_i, y \in C_j},  \quad \text{for $i$, $j \in \cur{1, \ldots, m}$,}\]
then
\begin{equation} \label{eq:ab-bound-ckg} \int_0^\infty ( C_{k,G(z)} -1 ) d z  \le \C(G_k).\end{equation}
Moreover, for every $i=1, \ldots, m$, $|C_i| \ge k$, hence $m \le |V|/k$, and there exists $v \in C_i$ such that $\n_G(v) \in C_i$ and $\n_G(\n_G(v)) = v$. 
\begin{proof}


The function $z \mapsto C_{k, G(z)}$ is piecewise constant, with jumps of absolute size $1$, with positive sign on a set $J_+$ and negative sign on a set $J_-$. 
An integration by parts gives
\[ \int_{0}^{\infty}\bra{ C_{k,G(z)}(z)-1}dz=\sum_{z \in J_{-}}z-\sum_{z\in J_{+}}z\leq
\sum_{z \in J_{-}}z.\]
We interpret the right hand side above as $\C(G_k)$ for a suitable graph $G_k$. To define the sets $C_i$, we let $J_+ = \cur{z_1, \ldots, z_{m}}$ and define, for every $z_i$, the ``seed'' of $C_i$  as the set of nodes that gives an additional component with at least $k$ nodes, i.e., obtained by merging two components in $G(z_i^-)$, both having less that $k$ nodes. Notice that, since $C_i$ will be then completed by adding nodes to such seeds, the last statement is already fulfilled. Indeed, any seed contains at least $k$ nodes we can  always choose $v_1$, $v_2$ in a seed such that the paths from $v_1$, $v_2$ merge first (among those from other nodes in the same seed). This gives that $v_2= \n_G(v_1)$ and $v_2 = \n_G(v_1)$.
\begin{figure}
[h]
\begin{tikzpicture}[scale=1]

  \draw[->] (-0.2,0) -- (10.5,0) node[right] {$z$};
  \draw[->] (0,-0.2) -- (0,10.5) node[above] {$V$};

  \foreach \x in {1,2,3,4,5,6,7,8,9,10}
    \draw[shift={(\x,0)}] (0pt,2pt) -- (0pt,-2pt) node[below] {$\x$};

  \foreach \ix/\iy/\ox/\oy/\weight/\where in {1/1/1/2/1/0.8, 1/2/1/3/2/0.8, 1/3/2/4/5/1.3, 1/2/2/4/6/1.7, 2/4/2/5/3/2.2, 2/5/2/6/4/2.2, 2/6/1/7/6/1.7, 1/7/1/3/7/0.8, 1/7/2/8/9/1.3, 2/8/2/9/7/2.2,2/9/2/10/8/2.2}
   \draw (-\ix, \iy) -- (-\ox, \oy) node at (-\where, {(\iy+\oy)/2}) {$\weight$};

  \foreach \x/\y in {1/1,1/2,1/3,2/4,2/5,2/6,1/7,2/8,2/9,2/10}
    \filldraw[white] (-\x,\y) circle (8 pt);
     \foreach \x/\y in {1/1,1/2,1/3,2/4,2/5,2/6,1/7,2/8,2/9,2/10}
    \draw (-\x,\y) circle (8 pt) node {$\y$};

   \draw (-0.2,1) -- (1,1);
   \draw (-0.2,2) -- (1,2);
   \draw (1,1) -- (1,2);
   \draw (1,1.5) -- (2,1.5);
   \draw (-0.2,3) -- (2,3);
   \draw (2, 1.5) -- (2, 3);
   \filldraw (2, {(3+1.5)/2}) circle (2 pt);
   \draw[very thick] (2, {(3+1.5)/2}) -- (5,  {(3+1.5)/2});
   \draw (-0.2,4) -- (3,4);
   \draw (-0.2,5) -- (3,5);
   \draw (3,4) -- (3,5);
   \draw (3,4.5) -- (4,4.5);
   \draw (-0.2,6) -- (4,6);
   \draw (4,4.5) -- (4,6);
   \filldraw (4, {(6+4.5)/2}) circle (2 pt);
   \draw [very thick](4,{(6+4.5)/2}) -- (5,{(6+4.5)/2});
   \draw[very thick] (5, {(6+4.5)/2}) -- (5, {(3+1.5)/2});
   \draw[very thick] (5, {(6+4.5+3+1.5)/4}) -- (6, {(6+4.5+3+1.5)/4});
   \draw[very thick] (6, {(6+4.5+3+1.5)/4}) -- (6,{(6+4.5+3+1.5)/8 + 7/2});
   \draw (6, {(6+4.5+3+1.5)/8 + 7/2}) -- (6, 7);
   \draw [very thick] (6, {(6+4.5+3+1.5)/8 + 7/2}) --(9, {(6+4.5+3+1.5)/8 + 7/2}) ; 
   \draw (-0.2,7) -- (6,7);
   \draw (-0.2,8) -- (7,8);
   \draw (-0.2,9) -- (7,9);
   \draw (7,8) -- (7, 9);
   \draw (7,8.5) -- (8, 8.5);
   \draw (-0.2,10) -- (8,10);
   \draw (8,8.5) -- (8,10);
   \filldraw (8,{(8.5+10)/2})  circle (2 pt);
   \draw[very thick] (8,{(8.5+10)/2}) -- (9,{(8.5+10)/2});
   \draw[very thick] (9,{(8.5+10)/2}) -- (9, {(6+4.5+3+1.5)/8 + 7/2}) ;
	\draw[very thick] (9, { (8.5+10)/4+ (6+4.5+3+1.5)/16 + 7/4}) -- (10.5, { (8.5+10)/4+ (6+4.5+3+1.5)/16 + 7/4});

\end{tikzpicture}
\caption{A weighted graph $G$  and its tree-like representation of the connected components of $G(z)$. Black dots correspond to seeds for the construction of $C_i$'s with $k=3$. Notice that regardless whether the node $7$ is added to $\cur{1,2,3}$ or $\cur{4,5,6}$, the resulting (different) trees have always total weight $5+9=14$.} \label{fig:prim}
\end{figure}
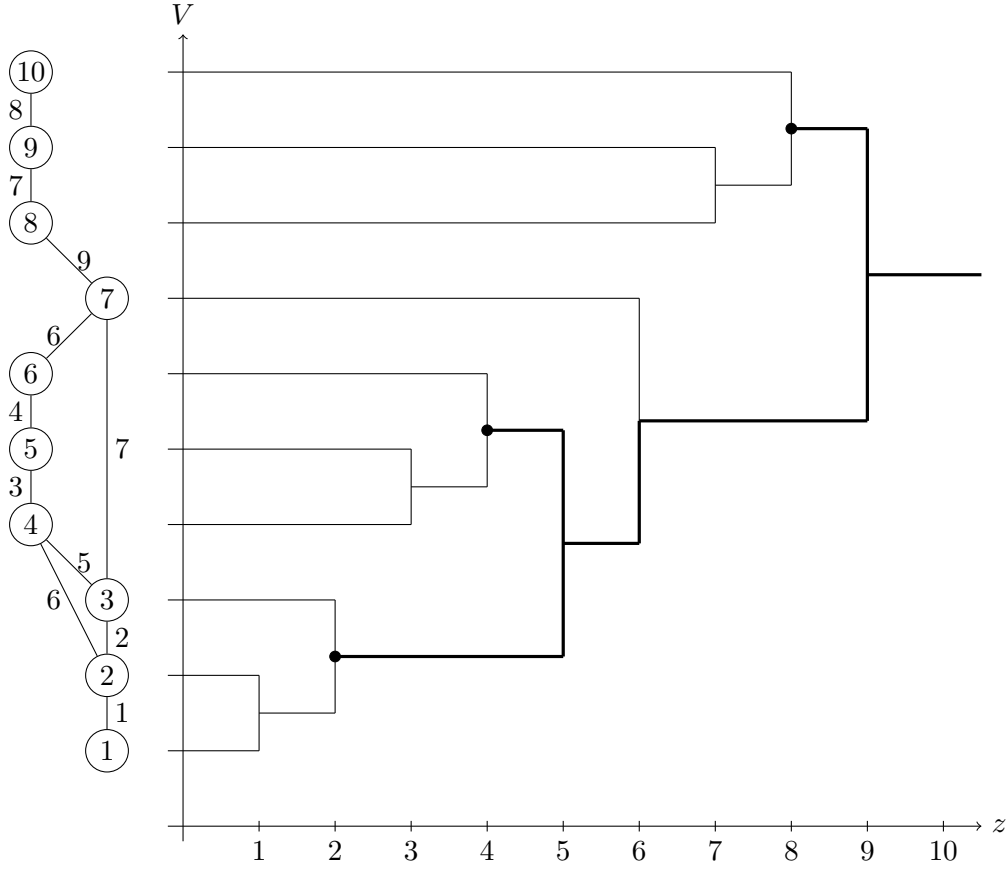

To completely determine every $C_i$, it is simpler to argue graphically on the the tree-like representation (Fig.~\ref{fig:prim}), where we highlight the ``birth'' of $C_i$ at $z_i \in J_{-}$ by thickening the shortest path from the seed towards the root at $z =\infty$. At every $z$ such that two thick paths merge, the corresponding two connected components with at least $k$ elements become one, hence $C_{k,G(z)}$ jumps downwards, i.e., $z \in J_{-}$. For $v \in V$, to determine the set $C_i$ such that $v$ belongs, consider the shortest path towards from the trivial component containing only $v$ at $z=0$ towards the root at $z = \infty$ in the tree-like representation. Let $z_v$ be the smallest value such that such path merges with a thick one (the case that $z_v \in J_+$ and $v$ becomes part of a seed is trivial). We then add to a single $C_i$, among those further from the root on such thick line, all the connected component to which $v$ belongs in $G(z_v^-)$. In fact, a precise choice is not relevant to the thesis, so we simply add it to the $C_i$ with smallest $i$ (other choices may give more desirable properties, but we do not need them for our purposes). 

To prove \eqref{eq:ab-bound-ckg} it is sufficient to realize that the graphical representation of Kruskal's algorithm on the graph $G_k$ gives exactly the thickened tree. \qedhere

\end{proof}
\end{lemma}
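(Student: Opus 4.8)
The plan is to repeat the integration-by-parts computation behind \eqref{eq:ab-general}, now for the non-monotone step function $z\mapsto C_{k,G(z)}$, and then to recognise the resulting sum of weights as the cost $\C(G_k)$ of a suitable contraction of $G$. Because the weights are distinct, each added edge merges exactly two components, so $z\mapsto C_{k,G(z)}$ has only unit jumps: it increases by $1$ on a finite set $J_+$, exactly when two components of $G(z^-)$ of size $<k$ merge into one of size $\ge k$, and decreases by $1$ on $J_-$, exactly when two components of size $\ge k$ merge. Since $C_{k,G(z)}$ runs from $0$ (no large components) to $1$ ($G$ connected, $\abs{V}\ge k$), we get $\abs{J_+}=\abs{J_-}+1=:m$, and integration by parts yields
\[ \int_0^\infty \bra{C_{k,G(z)}-1}\,dz \;=\; \sum_{z\in J_-} z-\sum_{z\in J_+} z \;\le\; \sum_{z\in J_-} z. \]
It therefore suffices to build a partition $V=\bigsqcup_{i=1}^m C_i$ with $\abs{C_i}\ge k$ (hence $m\le\abs{V}/k$) for which $\C(G_k)=\sum_{z\in J_-} z$.

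For $z_i\in J_+$ let the seed $S_i$ be the component of $G(z_i)$ born at that jump; then $\abs{S_i}\ge k$, and the $S_i$ are pairwise disjoint, since a later up-jump merges two components of size $<k$, neither of which can contain an earlier, already large, seed. Dually, every component of $G(z^-)$ of size $\ge k$ contains at least one seed, and two such components merge precisely at the heights $z\in J_-$. In the tree-like representation of the components of $G(z)$ (see Fig.~\ref{fig:prim}), thicken, for each $i$, the path running from the birthpoint of $S_i$ up to the root: two thick paths then meet exactly at the heights in $J_-$, so the thick paths together with the root form a tree $\mathcal T$ on $\cur{S_1,\dots,S_m}$ whose $m-1$ edges are realised by the original edges that merged the corresponding large components, and thus carry the weights $\cur{z:z\in J_-}$.

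Next I extend $\cur{S_i}$ to a partition: following any $v\in V$ upward from $z=0$, its path first meets a thick path at some height, and I assign $v$ to the seed of smallest index lying below that meeting point along the thick path. Then $C_i\supseteq S_i$, so $\abs{C_i}\ge k$. It remains to check that Kruskal's algorithm on $G_k$ returns exactly $\mathcal T$, so that $\C(G_k)=\sum_{z\in J_-} z$. Fix the edge of $\mathcal T$ of height $z\in J_-$ joining blocks $C_a,C_b$; I claim every original edge between $C_a$ and $C_b$ has weight $\ge z$. Such an edge of weight $<z$ would join two vertices lying in the same component of $G(z^-)$; if that component is one of the two large components merged at $z$, we contradict that the height-$z$ edge is the cheapest edge between them; otherwise the two vertices lie in a common component of size $<k$, which (Kruskal's components never split) later joins the thickened structure at a single height and along a single thick path, so the tie-breaking rule assigns both vertices to the same block — a contradiction. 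The same computation shows that every non-$\mathcal T$ edge of $G_k$ is heavier than all $\mathcal T$-edges on the cycle it closes, so $\mathcal T$ is the minimum spanning tree of $G_k$, and \eqref{eq:ab-bound-ckg} follows.

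Finally, for the mutual-nearest-neighbour assertion, write $S_i=A\sqcup B$ for the two components merging at $z_i$, with $\abs{A}\ge\abs{B}$. If $\abs{A}\ge 2$, let $\cur{a_1,a_2}$ be the cheapest edge internal to $A$; any edge at $a_1$ cheaper than $\cur{a_1,a_2}$ would be active strictly before $z_i$, hence would have its other endpoint in $A$ and be a cheaper internal edge — impossible. So $\n_G(a_1)=a_2$ and, symmetrically, $\n_G(a_2)=a_1$, with $a_1,a_2\in S_i\subseteq C_i$; and if $\abs{A}=1$ then $\abs{B}=1$ and $S_i$ consists of a single MST edge whose endpoints are mutual nearest neighbours. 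I expect the main difficulty to be the contraction step in the third paragraph: showing that the tie-broken assignment of the ``loose'' vertices to seeds never creates a crossing edge in $G_k$ cheaper than the height of the corresponding $\mathcal T$-edge, so that passing to $G_k$ does not lower the cost below $\sum_{z\in J_-} z$. Everything else — the integration by parts, the count of seeds, and the size and degree bookkeeping — is a direct reading of the tree-like picture.
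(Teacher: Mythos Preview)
Your approach mirrors the paper's almost exactly: integration by parts, seeds at the $J_+$ jumps, extension to a partition via the tree-like picture, and identification of $\sum_{z\in J_-} z$ with $\C(G_k)$ by arguing that Kruskal's algorithm on $G_k$ returns the thickened tree $\mathcal T$. You in fact supply more detail than the paper on this last step (the paper simply asserts it), and your mutual-nearest-neighbour argument via the cheapest internal edge of the larger pre-seed component is cleaner than the paper's sketch.

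There is, however, a gap in the case analysis of your third paragraph. When $\{x,y\}$ has $x\in C_a$, $y\in C_b$, weight $<z$, and the common component $X$ of $G(z^-)$ is one of the two large components merged at $z$, you write ``we contradict that the height-$z$ edge is the cheapest edge between them''. This is either circular (if ``them'' means $C_a,C_b$, which is precisely what you are trying to prove) or a non sequitur (if ``them'' means $P,Q$, since $\{x,y\}$ lies \emph{inside} $X$, not between $P$ and $Q$). The correct argument is: since $X$ is large, both $x$ and $y$ have already joined the thick structure by height $z^-$; hence the seed $S_a$ lies in the same component as $x$ at height $z^-$, namely $X$, and likewise $S_b\subseteq X$. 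But $u\in C_a\cap P$ forces $S_a\subseteq P$ and $v\in C_b\cap Q$ forces $S_b\subseteq Q$, so $X=P$ and $X=Q$ simultaneously, which is impossible. With this correction the small-component case is as you wrote, and the ``same computation'' for non-$\mathcal T$ edges (replacing $z$ by the largest $J_-$ height on the cycle) then genuinely goes through.
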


\renewcommand{\dist}{\mathsf{d}}
\subsection{Metric MST problem}
If $(X, \dist)$ is a metric space and $V \subseteq X$, then a natural choice for a weight is $w(\cur{x,y}) = \dist(x,y)^p$, where $p>0$ is fixed. If $V$, $R$, $B \subseteq X$, are finite sets and $p>0$, we write
\[ \C^p(V) \quad \text{and respectively} \quad \C^p(R, B),\]
for the the MST cost functional on the complete graph on $V$ (and respectively, on the complete bipartite graph with independent sets $R$, $B$) and edge weights $w(\cur{x,y}) = \dist(x,y)^p$, for $\cur{x,y} \in E$. Notice that, by Remark~\ref{rem:p-dependence-mst}, the MST does not in fact depend on the choice of $p$, and moreover we may let $p\to \infty$ and obtain 
\[ \bra{ \C^p(V)}^{1/p} \to \C^\infty(V), \quad \bra{ \C^p(R, B)}^{1/p} \to   \C^\infty(R,B),\]
where $\C^\infty$ is the minimum bottleneck spanning tree cost defined in \eqref{eq:min-bottleneck} with edge weight given by the distance. 

We denote by
 \[ \dist(V,x) = \inf_{v \in V} \dist(v,x)\]
 and
\[ \dist(R, B) = \sup \cur{ \sup_{r \in R} \inf_{b \in B} \dist(r,b) , \sup_{b \in B} \inf_{r \in R} \dist(r,b) }\]
respectively the distance function from $V$ and the  Hausdorff distance between $R$ and $B$. Clearly, $\C^p(R \cup B) \le \C^p(R, B)$. The following lemma provides a sort of converse inequality.

\begin{lemma}\label{lemma:boundbi}
Let  $p>0$. There exists a constant $C = C(p)>0$ such that, for finite sets $R$, $B \subseteq X$,
\[  \C^p(R, B) \le C \bra{ \C^p(R) +  \sum_{r \in R}  \dist(B,r)^p + \sum_{b \in B} \dist(R,b)^p},\]
and, for some constant $C>0$,
\[ \C^\infty(R,B) \le C\bra{ \C^\infty(R) + \dist(R,B)}.\]
\begin{proof}
For simplicity, we assume that all edge weights are different (otherwise a small perturbation of the weights and a suitable limit gives the thesis).  Let $T_R$ denote the MST for the vertex set $R$ and fix $\bar{r} \in R$. For every $r \in R$, there exists a unique path in $T_R$ with minimal length connecting $r$ to $\bar{r}$. We associate to every $r \in R\setminus\cur{\bar {r}}$ the first edge $e(r)$ of such path (so that $r \in e(r)$). The correspondence $r \mapsto e(r)$ is a bijection.

We use this correspondence to define a connected spanning graph $S$ (not necessarily a tree) on the bipartite graph with independent sets $R$, $B$. For every $r \in R \setminus \cur{\bar{r}}$, if $e(r) = \cur{r, r'}$, we  add the edge $\cur{\n(r), r'}$ to $S$, where $\n(r) = \n_G(r) \in B$ and $G$ is the complete bipartite graph with independent sets $R$, $B$. Moreover, for every $r \in R$, $b \in B$ we also add the edges $\cur{r, \n(r)}$, $\cur{b, \n(b)}$. $S$ is connected because every $b \in B$ is connected to $R$ and the vertex set $R$ is connected: any path on $T_R$ naturally corresponds to a path on $S$ using the pair of edges $\cur{r, \n(r)}$, $\cur{\n(r), r'}$ instead of an edge $e(r) = \cur{r,r'}$. The triangle inequality gives
\[ \dist(\n(r), r')^p \le \bra{ \dist(\n(r),r) + \dist(r, r')}^p \le C\bra{ \dist(\n(r),r)^p + 
\dist(r, r')^p},\]
for some constant $C = C(p)>0$, hence
\[ \sum_{ \cur{r,b} \in S} \dist(r,b)^p \le C \bra{  \sum_{\cur{r,r'} \in T} \dist(r, r')^p + \sum_{r \in R} \dist(r,\n(r))^p + \sum_{b \in B} \dist(b,\n(b))^p}\]
and the first claim follows. Taking the $p$-th root both sides and letting $p \to \infty$ yields the second inequality.
\end{proof}
\end{lemma}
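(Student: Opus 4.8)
The plan is to bound $\C^p(R,B)$ from above by the weight of one carefully chosen connected spanning subgraph $S$ of the complete bipartite graph $G$ with independent sets $R$, $B$; since $\C^p(R,B)$ is the infimum of the weight over all such subgraphs, this is enough. I would first reduce to the case in which all pairwise distances are distinct (an arbitrarily small perturbation of the finitely many points, followed by a limit, handles the general case). Then I would fix a minimum spanning tree $T_R$ of $R$ for the weight $\dist(\cdot,\cdot)^p$, root it at an arbitrary $\bar r \in R$, and for each $r \in R\setminus\{\bar r\}$ let $e(r)=\{r,r'\}$ be the first edge on the unique $T_R$-path from $r$ to $\bar r$; the map $r\mapsto e(r)$ is a bijection onto the edge set of $T_R$. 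Writing $\n(r)\in B$ for the nearest blue neighbour of $r\in R$ and $\n(b)\in R$ for the nearest red neighbour of $b\in B$, I would take $S$ to consist of the edges $\{r,\n(r)\}$ for $r\in R$, the edges $\{b,\n(b)\}$ for $b\in B$, and a single ``detour'' edge $\{\n(r),r'\}$ for each $r\in R\setminus\{\bar r\}$ with $e(r)=\{r,r'\}$.

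Next I would check that $S$ is connected and spanning: every vertex is an endpoint of some edge of $S$, each $b\in B$ is joined to $\n(b)\in R$, so it is enough that the red vertices all lie in one component of $S$, and this holds because every $T_R$-edge $e(r)=\{r,r'\}$ is realised in $S$ by the length-two path $r-\n(r)-r'$, so every $T_R$-path — in particular the path from each $r$ to $\bar r$ — lifts to an $S$-path. For the weight estimate I would combine the triangle inequality with the elementary bound $(a+b)^p\le C_p(a^p+b^p)$ to get $\dist(\n(r),r')^p\le C_p\bra{\dist(B,r)^p+\dist(r,r')^p}$, while $\dist(r,\n(r))=\dist(B,r)$ and $\dist(b,\n(b))=\dist(R,b)$ by definition of nearest neighbour. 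Summing over the edges of $S$ and using that $r\mapsto e(r)$ is a bijection, so that $\sum_{r\ne\bar r}\dist(r,r')^p=\sum_{e\in T_R}w(e)=\C^p(R)$, one obtains
\[ \C^p(R,B)\le\sum_{\{r,b\}\in S}\dist(r,b)^p\le C\bra{\C^p(R)+\sum_{r\in R}\dist(B,r)^p+\sum_{b\in B}\dist(R,b)^p} \]
for a constant $C=C(p)$, which is the first assertion.

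For the bottleneck inequality I would raise the displayed bound to the power $1/p$ and let $p\to\infty$. The left-hand side tends to $\C^\infty(R,B)$ as recalled right before the statement of the lemma, and on the right-hand side the $1/p$-th root of a sum of finitely many $p$-th powers converges to the maximum of the bases, which here is at most $\max\bra{\C^\infty(R),\dist(R,B)}$ — using Remark~\ref{rem:p-dependence-mst} to see that every edge of $T_R$ has length at most $\C^\infty(R)$, and that $\sup_{r\in R}\dist(B,r)$ and $\sup_{b\in B}\dist(R,b)$ are at most the Hausdorff distance $\dist(R,B)$ — while $C(p)^{1/p}$ stays bounded. This yields $\C^\infty(R,B)\le C\bra{\C^\infty(R)+\dist(R,B)}$.

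I expect the only genuinely delicate step to be the construction of $S$: the naive idea of ``replace each $T_R$-edge by a detour through a blue point'' risks either disconnecting the graph or counting some edges with uncontrolled multiplicity, and the rooting-and-bijection device is exactly what makes the detour edges and the $\{r,\n(r)\}$ edges line up so that the $\dist(r,r')^p$ terms telescope precisely to $\C^p(R)$ rather than to a sum of unbounded multiplicities. The triangle/power-mean inequality and the passage to the limit $p\to\infty$ are then routine.
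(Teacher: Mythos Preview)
Your proof is correct and follows essentially the same approach as the paper: the same rooted-tree bijection $r\mapsto e(r)$, the same spanning subgraph $S$ built from nearest-neighbour edges plus the detour edges $\{\n(r),r'\}$, the same triangle-inequality estimate, and the same $p\to\infty$ limit for the bottleneck bound. If anything, you supply a bit more detail than the paper on why $C(p)^{1/p}$ stays bounded and why the right-hand side converges to at most $\max\bra{\C^\infty(R),\dist(R,B)}$.
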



\begin{remark}\label{rem:general-euclidean}
In the Euclidean setting $X=\R^d$, it is known \cite{steele_growth_1988} that, if $p<d$, there exists a constant $C  =C(d,p)>0$ such that
\begin{equation}\label{eq:linfty-bound} \C^p(V) \le C |V|^{1-p/d},\end{equation}
for any $V \subseteq [0,1]^d$. A similar uniform bound cannot be true in the bipartite case, as simple examples show.
\end{remark} 

A second fundamental difference between the usual Euclidean MST and its bipartite variant is that for the latter its maximum vertex degree does not need to be uniformly bounded by a constant $C = C(d)>0$ (again, examples are straightforward). The following result will be crucial to provide an upper bound in the random case. We say that $Q \subseteq \R^d$ is a cube if $Q = \prod_{i=1}^d (x_i, x_i+a)$ with $x = (x_i)_{i=1}^d \in \R^d$ and $a>0$ is its side length. The diameter of $Q$ is then $\sqrt{d}a$ and its volume $|Q| = a^d$. 

\begin{lemma}\label{lem:empty-cone}
Let $R$, $B \subseteq X$, let $T$ be a MST on the bipartite graph with independent sets $R$, $B$ and edge weight $w(x,y) = \dist(x,y)$ and let $\cur{r,b}  \in T$ with $\delta := \dist(r,b) > \dist(R,B)$. Then $S \cap R = \emptyset$, where
\[ S = \cur{ x \in X\, : \, \text{ $\dist(x,r) < \delta -\dist(R,B)$   and  $\dist(x,b)<\delta$}}.\]
In particular, when $X = [0,1]^d$, $S$ contains a cube $Q\subseteq [0,1]^d$ with volume
\[ |Q| =  \min \cur{ \bra{ \frac{ \delta - \dist(R,B)}{2 \sqrt{d}}}^{d}, 1}.\]
\begin{proof} 
Assume by contradiction that there exists $r' \in S \cap R$, and consider the two connected components of the disconnected graph $T \setminus \cur{r,b}$. If $r'$ is in the same component as $r$, then adding $\cur{r', b}$ to $T \setminus \cur{r,b}$ yields a tree (hence, connected) with strictly smaller cost, since $\dist(r',b) < \delta$, hence a contradiction. If $r'$ is in the same component as $b$, then adding $\cur{\n(r), r'}$ to $T\setminus \cur{r,b}$ again yields a tree with strictly smaller cost, since the triangle inequality gives
\[ \dist(\n(r),r') \le \dist(\n(r),r) + \dist(r,r') < \dist(R,B) + \delta - \dist(R,B) < \delta.\]
To prove the last statement, notice first that by convexity of $[0,1]^d$, the point $x$ on the segment connecting $r$ and $b$  at $(\delta - \dist(R,B))/2$ from $r$ belongs to $[0,1]^d$. Moreover, the open ball centred at $x$ with radius $(\delta - \dist(R,B))/2$ is entirely contained in $S$. Finally, intersection of any ball with radius $u\ge 0$ and center  $x \in [0,1]^d$ contains at least a cube of side length $\min\{ u/\sqrt{d}, 1\}$ (the worst case is in general when $x$ is a vertex of $[0,1]^d$).
\end{proof}
\end{lemma}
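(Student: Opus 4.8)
This is a deterministic exclusion property for edges of a bipartite MST, and the right tool is the exchange (``cut'') argument already behind Lemma~\ref{lem:voronoi}: assume the conclusion fails, delete the edge $\cur{r,b}$ from $T$, and build a strictly cheaper connected spanning subgraph. In detail, I would suppose there is a red point $r' \in S \cap R$, remove $\cur{r,b}$ from the tree $T$ — splitting it into two connected components, say $K_r \ni r$ and $K_b \ni b$ — and then, in each case below, exhibit a connected spanning subgraph of the complete bipartite graph on $R,B$ whose total weight (for $w = \dist$) is strictly less than that of $T$, contradicting minimality. Observe first that $r \notin S$, since the requirement $\dist(r,b) < \delta$ fails at $x = r$; hence any such $r'$ satisfies $r' \ne r$.

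If $r' \in K_r$, then the edge $\cur{r',b}$ joins $K_r$ to $K_b$, so replacing $\cur{r,b}$ by $\cur{r',b}$ produces a spanning tree whose cost drops by $\delta - \dist(r',b) > 0$, the inequality $\dist(r',b) < \delta$ being the second defining condition of $S$. If instead $r' \in K_b$, I would bring in the nearest blue neighbour $\n(r) \in B$ of $r$; since $\dist(r,\n(r)) = \dist(r,B) \le \dist(R,B) < \delta$ we have $\n(r) \ne b$. If $\n(r) \in K_b$, then replacing $\cur{r,b}$ by $\cur{r,\n(r)}$ reconnects the tree and lowers the cost because $\dist(r,\n(r)) < \delta$; if $\n(r) \in K_r$, then replacing $\cur{r,b}$ by $\cur{\n(r),r'}$ reconnects the tree and, by the triangle inequality together with both defining inequalities of $S$,
\[ \dist(\n(r),r') \le \dist(\n(r),r) + \dist(r,r') < \dist(R,B) + \bra{\delta - \dist(R,B)} = \delta, \]
so the cost strictly decreases. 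In every case we contradict minimality of $T$, proving $S \cap R = \emptyset$.

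For the Euclidean refinement ($X = [0,1]^d$), set $\rho := (\delta - \dist(R,B))/2 > 0$ and take the point $x$ on the segment $[r,b]$ at distance $\rho$ from $r$ (so $\dist(x,b) = \delta - \rho$); convexity of $[0,1]^d$ gives $x \in [0,1]^d$. A short triangle-inequality estimate shows the open ball of radius $\rho$ about $x$ lies in $S$: for $\dist(y,x) < \rho$ one has $\dist(y,r) < 2\rho = \delta - \dist(R,B)$ and $\dist(y,b) < \rho + (\delta-\rho) = \delta$. Finally, a ball of radius $\rho$ centred at a point of $[0,1]^d$ always contains an axis-parallel cube of side $\min\cur{\rho/\sqrt d, 1}$ (the worst case being when $x$ is a corner of $[0,1]^d$, the $\sqrt d$ coming from the diameter of the cube), which gives the stated value of $|Q|$.

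The only genuinely delicate point is the second case: one must be sure the repair edge truly joins the two components $K_r$ and $K_b$, rather than two vertices already in the same component, and the position of $\n(r)$ relative to this partition is not known in advance — so the case split $\n(r) \in K_r$ versus $\n(r) \in K_b$ is exactly what makes the exchange go through, and it does so without any genericity hypothesis on the edge weights. The remaining steps are routine triangle-inequality bookkeeping together with the elementary fact that Euclidean balls contain cubes.
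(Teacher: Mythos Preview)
Your proof is correct and follows essentially the same exchange argument as the paper, including the Euclidean ball-contains-cube step. The only difference is that in the case $r' \in K_b$ you add a sub-case for $\n(r) \in K_b$; the paper omits this because Lemma~\ref{lem:voronoi} (together with the uniqueness built into the definition of $\n$) forces $\{r,\n(r)\} \in T$ and hence $\n(r) \in K_r$ automatically, but your version has the mild advantage of not relying on that uniqueness.
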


\subsection{Probabilistic estimates}

In this section we collect some basic probabilistic bounds on distances between  i.i.d.\ uniformly distributed random variables $(X_i)_{i=1}^n$ on a cube $Q \subseteq \R^d$. Some of these facts are well known, especially for $d=1$, since they are related to order statistics, but we provide here short proofs for completeness. The basic observation is that, for every $x\in \R^d$, $0 \le \lambda \le \bra{ |Q|/\omega_d}^{1/d}$, then 
\begin{equation}\label{eq:steele-lower} P(\abs{x- X_i} > \lambda) = \frac{| Q \setminus B(x, \lambda)| }{|Q|} \ge 1- \omega_d \lambda^d/ |Q|,\end{equation}
hence, by independence,
\[P\bra{ \min_{i=1,\ldots, n}\abs{x- X_i} > \lambda} \ge  \bra{1- \omega_d \lambda^d/ |Q|}^n, \quad \text{for every $x \in \R^d$.}
\]
If $x \in Q$, we also have the upper bound (the worst case being $x$ a vertex of $Q$)
\[ P(\abs{x- X_i} > \lambda) \le  1- \omega_d 2^{-d} \lambda^d/ |Q|,\]
hence,
\begin{equation}\label{eq:steele-dist-xi}
   P\bra{ \min_{i=1,\ldots,n} \abs{ x - X_i} > \lambda} \le \bra{1- \omega_d 2^{-d} \lambda ^d/ |Q|}^n, \quad \text{for every $x \in Q$.}
\end{equation}

Assume $Q = [0,1]^d$. The layer-cake formula $\EE\sqa{Z^p} = \int_0^\infty P(Z> \lambda) p \lambda^{p-1} d \lambda$ yields, for every $p>0$,  existence of a constant $C =C(d,p)>0$ such that, for every $n\ge 1$,
\begin{equation}\label{eq:min-moment-p}   C^{-1} n^{-p/d} \le \sup_{x \in [0,1]^d} \EE \sqa{ \min_{i=1,\ldots,n} \abs{ x - X_i}^p}  \le C n^{-p/d}.\end{equation}

For $A \subseteq [0,1]^d$, write $N(A) = \sum_{i=1}^n I_{A}(X_i)$. Then $N(A)$ has binomial law with parameters $(n, |A|)$. In particular, for every $t>0$, 
\[ \EE\sqa{ t^{N(A)}} = (1+ (t-1)|A|)^n.\]
Markov inequality yields that, letting  $F(t) = \bra{t \log t - t + 1}$ then, for every $t>1$,
\begin{equation}\label{eq:concentration-upper}
 P( N(A) > t n|A| ) \le  \exp\bra{ -n |A|F(t)},
\end{equation}
and, for $t<1$,
\begin{equation}\label{eq:concentration-lower} P(N(A) < t n|A|) \le \exp\bra{- n|A|F(t)}.
\end{equation}

We need some uniform bounds on $N(Q)$ for every cube $Q \subseteq [0,1]^d$ with sufficiently large or small volume. We write for brevity, for $v \ge 0$,
\[ N^*(v) := \sup \cur{ N(Q) \, : \, \text{ $Q \subseteq [0,1]^d $ cube with $|Q| \le v$}},\]
and
\[ N_*(v) := \inf \cur{  N(Q) \, : \, \text{ $Q \subseteq [0,1]^d $ cube with $|Q| \ge v$}}.\]

\begin{lemma}\label{lem:cubes-filled}
Let $(X_i)_{i=1}^n$ be i.i.d.\ uniformly distributed on $[0,1]^d$. For every $v\in (0,1)$,  there exist $C = C(v,d) \ge 1$ such that, for every $n \ge C$, if $t>2^{2d}$, then
\begin{equation} \label{eq:upper-uniform-cubes} P\bra{ N^*(v) > t nv } \le\frac{1}{2^d v}  \exp\bra{ - n v 2^{d} F( t 2^{-2d})}, \end{equation}
while, if $t<2^{-2d}$, 
\begin{equation} \label{eq:lower-uniform-cubes} P\bra{ N_*(v) < t nv } \le  \frac{  2^{2d}}{v} \exp\bra{ - n v 2^{-2d} F( t 2^{2d})}.\end{equation}
\begin{proof}
We prove \eqref{eq:upper-uniform-cubes} first. Let $k \in \mathbb{Z}$ such that 
\[ 2^{-k-2} \le v^{1/d} < 2^{-k-1},\]
 so that every cube $Q$ with $|Q| =v$ is contained in a dyadic cube $\prod_{\ell=1}^d ( n_\ell 2^{-k}, (n_\ell+1)2^{-k})$, with $(n_\ell)_{i=1}^d \in \cur{0,\ldots, 2^{k}-1}^d$. It is then sufficient to consider the event $N(Q) > t nv$ for at least one such dyadic cube, i.e., using the union bound \eqref{eq:concentration-upper}, we bound from above 
\[ P\bra{ N^*(v ) > tnv }\le 2^{kd} P\bra{ N(Q) > t nv ) }.\]
Using that, for a dyadic cube with $|Q| = 2^{-dk}$,
\[ 2^{kd} \le 1/( 2^{d} v ) \quad \text{and} \quad  |Q|2^{-2d} \le v \le 2^{-d} |Q|, \]
it follows that \eqref{eq:concentration-upper} applies with $2^{-2d}t$ instead of $t$, yielding
\[ \begin{split} 2^{kd} P\bra{ N(Q) > t nv } & \le 2^{kd} P\bra{ N(Q) > 2^{-2d} t n|	Q| } \\
& \le \frac{1}{2^d v}  \exp\bra{ - n|Q|  F( t 2^{-2d})}\\
& \le \frac{1}{2^d v}  \exp\bra{ - n v 2^{d} F( t 2^{-2d})}.
\end{split}\]

The argument for \eqref{eq:lower-uniform-cubes} is analogous. Let $k \in \mathbb{Z}$ be such that
\[ 2^{-k+1} \le v^{1/d} < 2^{-k+2},\]
hence every cube $Q$ with $|Q| \ge v$ contains at least one dyadic cube with volume $2^{-kd}$, and we are reduced to consider the event that for such a cube $N(Q) < t nv $, i.e., 
 \[ P\bra{ N_*(v ) < tnv }\le 2^{kd} P\bra{ N(Q) < tnv } .\]
Using that 
\[ 2^{kd} \le 2^{2d}/v \quad \text{and} \quad  2^d |Q|\le v \le 2^{2d}|Q|,  \]
it follows that \eqref{eq:concentration-lower} applies with $2^{2d}t$ instead of $t$, yielding
\[ \begin{split} 2^{kd} P\bra{ N(Q) < t nv } & \le 2^{kd} P\bra{ N(Q) < 2^{2d} t n |Q|} \\
& \le \frac{  2^{2d}}{v} \exp\bra{ - n |Q| F( t 2^{2d})}\\
& \le \frac{  2^{2d}}{v} \exp\bra{ - n v 2^{-2d} F( t 2^{2d})}.\qedhere
\end{split}\] 
\end{proof}
\end{lemma}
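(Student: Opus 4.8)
The plan is to reduce the supremum and infimum over the uncountable family of sub-cubes to one fixed length scale and then combine the Chernoff-type binomial bounds \eqref{eq:concentration-upper}--\eqref{eq:concentration-lower} with a union bound. The constraint $|Q|\le v$ (resp.\ $|Q|\ge v$) forces $Q$ to have side length of order $v^{1/d}$, so no multiscale chaining is needed: I would fix an integer $k$ with $2^{-k}$ of order $v^{1/d}$ and cover by at most $2^{kd}$ reference cubes of volume $2^{-kd}$ (of order $v$), taken from a grid of mesh of order $v^{1/d}$, so that every $Q$ with $|Q|\le v$ is contained in one of them and, dually, every $Q$ with $|Q|\ge v$ contains one of them. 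On each reference cube $Q$ the count $N(Q)$ is exactly a $\mathrm{Bin}(n,|Q|)$ variable, so \eqref{eq:concentration-upper}--\eqref{eq:concentration-lower} apply directly; the union bound costs only the prefactor $2^{kd}$, which is of order $1/v$ and hence harmless against the exponentially small probabilities.

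For \eqref{eq:upper-uniform-cubes}, take $k$ with $2^{-k-2}\le v^{1/d}<2^{-k-1}$; then any $Q$ with $|Q|\le v$ has side $<2^{-k-1}$, hence lies in a reference cube of side $2^{-k}$, for which $|Q|=2^{-kd}$ satisfies $2^{-2d}|Q|\le v\le 2^{-d}|Q|$ and the number of reference cubes is $2^{kd}\le 1/(2^d v)$. Since $t>2^{2d}$ we have $2^{-2d}t>1$, and since $v\ge 2^{-2d}|Q|$ the event $\{N(Q)>tnv\}$ implies $\{N(Q)>2^{-2d}t\,n|Q|\}$; applying \eqref{eq:concentration-upper} with threshold $2^{-2d}t$, and using $|Q|\ge 2^{d}v$ together with $F\ge 0$, yields $P(N(Q)>tnv)\le\exp(-n|Q|F(2^{-2d}t))\le\exp(-nv\,2^{d}F(2^{-2d}t))$. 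By monotonicity of $N$ and a union bound over the reference cubes, $P(N^*(v)>tnv)\le 2^{kd}\exp(-nv\,2^{d}F(2^{-2d}t))$, which is \eqref{eq:upper-uniform-cubes}.

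The bound \eqref{eq:lower-uniform-cubes} is the mirror image. Take $k$ with $2^{-k+1}\le v^{1/d}<2^{-k+2}$; then any $Q$ with $|Q|\ge v$ has side $\ge 2\cdot 2^{-k}$, hence contains a dyadic cell of side $2^{-k}$, for which $2^{d}|Q|\le v\le 2^{2d}|Q|$ and the number of cells is $2^{kd}\le 2^{2d}/v$. Since $t<2^{-2d}$ we have $2^{2d}t<1$, and since $v\le 2^{2d}|Q|$ the event $\{N(Q)<tnv\}$ implies $\{N(Q)<2^{2d}t\,n|Q|\}$; applying \eqref{eq:concentration-lower} with threshold $2^{2d}t$, and using $|Q|\ge 2^{-2d}v$, yields $P(N(Q)<tnv)\le\exp(-n|Q|F(2^{2d}t))\le\exp(-nv\,2^{-2d}F(2^{2d}t))$. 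A union bound over the cells then gives \eqref{eq:lower-uniform-cubes}.

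The only step requiring genuine care is the covering claim: a cube of volume $\le v$ fits inside, and a cube of volume $\ge v$ contains, a cube from a fixed grid of the right scale. The ``contains'' direction is automatic once the grid mesh is at most half the side of the large cube; the ``fits inside'' direction has the mild subtlety that a small cube may straddle a grid hyperplane, so one should take the grid mesh a fixed factor finer than $v^{1/d}$ and allow the reference cubes (blocks of adjacent cells) to overlap --- this is harmless and only affects the dimensional constants $2^{\pm 2d}$, which already appear in the statement. The restrictions $t>2^{2d}$ and $t<2^{-2d}$ are precisely what is needed for the rescaled thresholds $2^{-2d}t$, $2^{2d}t$ to lie in the ranges where \eqref{eq:concentration-upper}, \eqref{eq:concentration-lower} hold. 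Everything else is routine; in particular $n\ge C(v,d)$ does not enter the inequalities themselves and is kept only to ensure their right-hand sides are genuinely small in the intended applications.
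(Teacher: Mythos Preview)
Your proposal is correct and follows essentially the same route as the paper: the same choice of dyadic scale $k$ in each part, the same comparison $2^{-2d}|Q|\le v\le 2^{-d}|Q|$ (resp.\ $2^d|Q|\le v\le 2^{2d}|Q|$), the same rescaled thresholds $2^{-2d}t$ and $2^{2d}t$ in \eqref{eq:concentration-upper}--\eqref{eq:concentration-lower}, and the same union bound yielding the prefactors $1/(2^d v)$ and $2^{2d}/v$. You even flag the covering subtlety (a small cube straddling a dyadic hyperplane) that the paper's proof passes over; your suggested fix via overlapping reference blocks is the standard remedy and affects only the dimensional constants.
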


In the following result we investigate the random variable (for $n \ge 2$)
\[ M^n = \max_{i=1, \ldots, n} |X_i - \n(X_i)|,\]
where $\n(X_i)$ denotes the closest point to $X_i$ among $\cur{X_j}_{j \neq i}$, so that
\[ |X_i - \n(X_i)| = \min_{\substack{ j \le n \\ j \neq i}} |X_i - X_j|.\]
Heuristically, since $|X_i-\n(X_i)| \sim n^{-1/d}$, and the random variables are almost independent, we still expect that $M^n \sim n^{-1/d}$, up to logarithmic factors. This is indeed the case.


\begin{proposition}\label{prop:max-min}
Let $n \ge 2$, $(X_i)_{i=1}^n$ be i.i.d.~ uniformly distributed on $[0,1]^d$. Then, for every $a>0$, there exists a constant $C = C(a,d) \ge 1$ such that
\[  P \bra{ C^{-1} \le  \frac{M^n}{(\log( n) / n)^{1/d}} \le C } \ge 1 - \frac{C}{n^a}.\]
In particular, for every $q>0$, there exists $C =C(d,q)>0$ such that, for every $n \ge 2$,
\begin{equation} \label{eq:moment-M-n}\EE\sqa{ \bra{ M^n }^q } \le C \bra{ \frac{ \log n}{n}}^{q/d}.\end{equation}
\begin{proof}



%
%
%
%
For every $n$ sufficiently large, we choose $\eta = \eta(d,n) \in ((a+1)2^{2d+1}, (a+1)2^{2d+2})$ such that, defining $\delta =  \bra{ \eta \log( n) / {n}}^{1/d}$, we have that $1/(3\delta)^d$ is integer.
 
 Consider a partition of $[0,1]^d$ into cubes $\cur{Q_j}_{j\in J}$ of volume $|Q_j| = v :=\delta^d$, with $J = \cur{1, \ldots, \delta^{-d}}$. Fix $\bar{t}>2^{2d}$ such that $2^d F(\bar{t}2^{-2d}) > a+1$.  Lemma~\ref{lem:cubes-filled} entails that the event
\[ E := \bigcap_{j \in J} \cur{2 \le N(Q_j)  \le \bar{t} \log n } \supseteq \cur{N^*(v) \le \bar{t} \log n } \cap \cur{N_*(v) \ge 2}.\]
has probability larger than $1- C/n^a$ for some constant $C = C(d,a)$. Indeed, \eqref{eq:upper-uniform-cubes} gives
\[\begin{split}  P( N^*(v) \le \bar{t} \log n ) & = P\bra{N^*(v) \le  \bar{t}/\eta nv} \\
& \le 2^{-d} \frac{n}{\eta \log n} \exp\bra{ - \log(n) \eta  2^d F(\bar{t}2^{2d}/\eta ) } \\
&^\le \frac{1}{2^{d+1} \eta n^a},\end{split}\]
having also used that $\eta \ge 1$. Similarly, by \eqref{eq:lower-uniform-cubes}, with $t = 2/(\log(n) \eta)$,
\[ \begin{split} P( N_*(v) > 2 )  & = P\bra{N_*(v) > \bra{ 2/(\log(n) \eta) } nv} 
\\
& \le 2^{2d} \frac{n}{\eta \log n} \exp\bra{ - \log(n) \eta 2^{-2d} F(2/(\log(n) \eta))}.\end{split} \]
For $n$ sufficiently large, $ F\bra{2/(\log(n) \eta) }o> 1/2$, hence
\[\begin{split}  2^{2d} \frac{n}{\eta \log n} \exp\bra{ - \log(n) \eta 2^{-2d} F(2/(\log(n) \eta))} & \le 2^{2d} \frac{n}{\eta \log n } \exp\bra{ - \log(n) \eta 2^{-2d-1} }\\
& \le \frac{2^{2d}}{\eta n^a}. 
\end{split} \]


 Hence,  to prove the thesis, we can assume that $E$ holds.  In such a case, it follows  at once that $M^n \le \sqrt{d} v^{1/d}$, hence
\[ P\bra{ \frac{M^n}{(\log (n) /n)^{1/d}} > \sqrt{d} \eta^{1/d} }  \le  \frac{C}{n^a}.\]

Next, we introduce the random variables
\[ M^n_j  := \sup_{\substack{ i \le n \\ X_i \in Q_j} }\,  \inf_{ \substack{ \ell \le n \\ 0< |X_i- X_\ell| < \delta } } | X_i - X_\ell|,\]
i.e., we maximize the minimum distances between points in $Q_j$ and those that are at distance at most $\delta$. These are not necessarily in $Q_j$ but must belong to the union of all cubes $Q_\ell$ covering $\cur{x |  \dist(x, Q_j) \le \delta}$, that we denote by $Q_{j, \delta}$. Since  every cube $\cur{Q_j}_{j\in J}$ has side length $\delta$, then $Q_{j,\delta}$  is a cube of side length $3 \delta$, hence  $|Q_{j,\delta}| /|Q| = 3^d$. Notice also that, since each cube $Q_j$ contains at least two elements (for we assume that $E$ holds) then
\[ M^n = \max_{j \in J} M^n_j.\]

We now use the following fact: for every $f: \cur{1, \ldots, n} \to J$, conditioning upon the event 
\[  A_f = \cur{ X_i \in Q_{f(i)}\quad  \text{for every $i=1, \ldots, n$,}}, \]
the random variables $(X_i)_{i=1}^n$ are independent, each $X_{i}$ uniform on $Q_{f(i)}$. Thus, we further disintegrate upon the events $A_f$, and since $E$ holds we consider only $f$'s such that, for every $j\in J$, 
\[ 2 \le \abs{\cur{f = j} } 	\le \bar{t} \log n.\]
%
We let $K \subseteq J$ denote a subfamily consisting of $(\delta 3)^{-d}$ cubes such that, for $j,k \in K$, with $j \neq k$, $Q_{j,\delta} \cap Q_{k,\delta} = \emptyset$ so that the random variables $(M^n_k)_{k \in K}$ are independent (after conditioning upon $A_f$). %
Using 
\[ M^n = \max_{j\in J} M^n_j \ge \max_{k \in K}  M^n_k,\]
and independence we have, for every $\lambda>0$,
\[ 
P(  M^n   \le   \lambda | A_f) \le P( \max_{k \in K}  M^n_k \le \lambda | A_f ) = \prod_{k \in K}  P(   M^n_k \le \lambda  | A_f ) \]
The probability $P( M^n_k \le \lambda | A_f )$ clearly depends only on the number of elements in each $\cur{f = j}$, i.e., on the number of points in each $Q_j$, for $j \in J$, not their labels. We may therefore assume that $f(1) = k$, i.e., $X_1 \in Q_k$ and that
\[ \cur{X_i}_{i=1}^n \cap Q_{k,\delta} = \cur{X_1, \ldots, X_\ell},\]
with
\begin{equation}\label{eq:n-qk-delta-upper} \ell = N(Q_{k, \delta})\le 3^d \bar{t} \log n.\end{equation}
 Then, 
\[ M^n_k \ge \min_{ j=2,\ldots, \ell} |X_j - X_1|,\]
hence, further conditioning upon $X_1$ and using independence,
\[ \begin{split} P(M^n_k > \lambda  | A_f ) & \ge \frac{1}{|Q_k|} \int_{Q_k} P\bra{ \min_{  j=2,\ldots, \ell} |X_j - x| >\lambda  | A_f, X_1=x} d x \\ 
& \ge \frac{1}{|Q_k|} \int_{Q_k}  \prod_{j=2}^\ell P( |X_j - x| >\lambda | A_f, X_1=x) d x\\
& \ge \frac{1}{|Q_k|} \int_{Q_k}  \prod_{j=2}^\ell \bra{ 1- \omega_d \lambda^d/ |Q_{f(j)}|}_+ d x \quad \text{by \eqref{eq:steele-lower},}\\
& \ge \bra{ 1- \omega_d (\lambda/\delta)^{d} }^{N(Q_{k,\delta})-1}_+ =  e^{-u ( N(Q_{k,\delta})-1)},
\end{split} \]
where in the last equality we choose $\lambda = \delta ((1-e^{-u})/ \omega_d)^{1/d}$ with $u =1/(2\cdot 3^d \bar{t})$.  Indeed, this choice ensures that, by \eqref{eq:n-qk-delta-upper} we bound from above, for $n$ sufficiently large, 
\[ P\bra{ M^n \le  \lambda| A_f }  \le \bra{ 1-e^{-u ( 3^d \bar{t} \log(n)-1)}}^{(\delta 3)^{-d}} \le \bra{ 1-\frac{e^{u}}{n^{1/2}}}^{n/ (3^{d}\log(n))} \le \frac{C}{n^a}.
\]
Finally, \eqref{eq:moment-M-n} follows since $M^n \le \sqrt{d}$, hence, choosing $a = q/d$, we bound from above
\[ \EE\sqa{ \bra{M^n}^q } \le d^{q/2}  P( M^n > C (\log(n)/n)^{1/d} ) + C^q \bra{ \frac{ \log n}{n}}^{q/d}\le C'\bra{ \frac{ \log n}{n}}^{q/d} . \qedhere\]
\end{proof}
\end{proposition}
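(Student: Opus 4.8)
The plan is to prove the two-sided concentration bound for $M^n$ via a partitioning argument at the critical scale $(\log n/n)^{1/d}$, and then to read off the moment estimate \eqref{eq:moment-M-n} from it using only the deterministic bound $M^n\le\diam([0,1]^d)=\sqrt d$. Fix $a>0$ and abbreviate $\rho=(\log n/n)^{1/d}$. Choose a constant $\eta=\eta(a,d)\ge1$ (large, in the sense made precise below), adjusted slightly with $n$ so that, with $\delta:=(\eta\log n/n)^{1/d}=\eta^{1/d}\rho$, the number $(3\delta)^{-d}$ is an integer; partition $[0,1]^d$ into the $\delta^{-d}$ cubes $\cur{Q_j}_{j\in J}$ of common volume $v=\delta^d$, each of diameter $\sqrt d\,\delta$ and expected occupancy $nv=\eta\log n$.

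\emph{Upper bound.} If every cube $Q_j$ contains at least two of the points, then each $X_i$ has a neighbour inside its own cube, hence $M^n\le\sqrt d\,\delta$. By Lemma~\ref{lem:cubes-filled}, the uniform lower-occupancy estimate \eqref{eq:lower-uniform-cubes} applied with $t=2/(\eta\log n)$ gives $P(N_*(v)<2)\le C/n^a$ provided $\eta$ is large enough depending on $a,d$, while the uniform upper-occupancy estimate \eqref{eq:upper-uniform-cubes} applied with a fixed $\bar t=\bar t(a,d)$ large enough gives $P(N^*(v)>\bar t\log n)\le C/n^a$. On the intersection $E$ of these two events, which has probability at least $1-C/n^a$, every $Q_j$ carries between $2$ and $\bar t\log n$ points, so $M^n\le\sqrt d\,\delta=\sqrt d\,\eta^{1/d}\rho$: this is the desired upper bound on $M^n/\rho$.

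\emph{Lower bound.} Here one must produce, with high probability, at least one point whose nearest neighbour is at distance $\gtrsim\delta$. On $E$ we may write $M^n=\max_{j\in J}M^n_j$, where $M^n_j$ is the largest, over points in $Q_j$, of the distance to the nearest point lying within $\delta$ of it; all such competitors lie in the $3\delta$-fattening $Q_{j,\delta}$ of $Q_j$, which on $E$ contains at most $3^d\bar t\log n$ points. Since the isolation events are not independent, I would condition on the assignment $i\mapsto$ (index of the cube containing $X_i$): under it the $X_i$ are independent and uniform on their respective cubes, and it suffices to treat assignments compatible with $E$. Picking a subfamily $K\subseteq J$ of $(3\delta)^{-d}$ cubes whose fattenings $Q_{k,\delta}$ are pairwise disjoint makes the variables $(M^n_k)_{k\in K}$ conditionally independent, and $M^n\ge\max_{k\in K}M^n_k$. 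Fixing $k$, singling out one point $X_1\in Q_k$ and letting $X_2,\dots,X_\ell$ be the $\ell-1\le3^d\bar t\log n$ other points of the configuration inside $Q_{k,\delta}$, conditional independence and \eqref{eq:steele-lower} give
\[ P(M^n_k>\lambda)\ge\bra{1-\omega_d(\lambda/\delta)^d}_+^{\ell-1};\]
choosing $\lambda=c\,\delta$ with $c=c(d,\bar t)>0$ small makes the right-hand side at least $n^{-1/2}$. By conditional independence over $K$, which has $(3\delta)^{-d}$ elements, i.e. of order $n/\log n$,
\[ P(M^n\le\lambda)\le\bra{1-n^{-1/2}}^{(3\delta)^{-d}}\le\exp\bra{-c'\,n^{1/2}/\log n}\le C/n^a,\]
so, averaging over the cell assignment and intersecting with $E$, we get $M^n\ge c\,\delta\ge c\,\rho$ with probability at least $1-C/n^a$: the desired lower bound.

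\emph{Moment bound and main obstacle.} Since $M^n\le\sqrt d$ deterministically, applying the two-sided bound with $a=q/d$ yields
\[ \EE\sqa{(M^n)^q}\le d^{q/2}\,P(M^n>C\rho)+C^q\rho^q\le C'\rho^q=C'\bra{\frac{\log n}{n}}^{q/d},\]
which is \eqref{eq:moment-M-n}. I expect the genuine difficulty to lie entirely in the lower bound: the $n$ isolation events have to be decoupled — by conditioning on the cell assignment and then keeping only a well-separated subfamily of cells — while simultaneously the number of potential near neighbours of a given point must be kept below $C\log n$, which is exactly what the uniform upper-occupancy bound $N^*(v)\le\bar t\log n$ provides; with this scaffolding in place the remaining work is the book-keeping needed to choose $\eta$, $\bar t$ and $c$ compatibly.
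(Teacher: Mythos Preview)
Your proposal is correct and follows essentially the same approach as the paper: the same partition at scale $\delta=(\eta\log n/n)^{1/d}$, the same appeal to Lemma~\ref{lem:cubes-filled} for the event $E$ on which every cube carries between $2$ and $\bar t\log n$ points, the same conditioning on the cell assignment, the same extraction of a $(3\delta)^{-d}$-sized well-separated subfamily $K$ to gain conditional independence of the $M^n_k$, and the same derivation of the moment bound from the deterministic diameter estimate. The only differences are cosmetic (you leave the constants $\eta,\bar t,c$ implicit where the paper spells them out).
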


%
%

A minor variation of the proof of the previous proposition yields the following bipartite analogue, where we replace $M^n$ with the (random) Hausdorff distance between $R$ and $B$.

\begin{proposition}\label{prop:max-min-bipartite}
For $n \ge 1$, let $R^n = \cur{X_i}_{i=1}^{n_R}$, $B^n= \cur{Y_i}_{i=1}^{n_B}$ be (jointly) i.i.d.~uniformly distributed on $[0,1]^d$ with $n_R+n_B = n$ and
\[ \lim_{n \to \infty} \frac{n_R}{n} = \alpha_R \in (0,1), \quad \lim_{n \to \infty} \frac{n_B}{n} = \alpha_B = 1 -\alpha_R.\]
Then, for every $a>0$, there exists a constant $C = C(d,\alpha_R, \alpha_B, a)\ge 1$ such that, for every $n$ sufficiently large,
\[  P \bra{ C^{-1} \le  \frac{\dist(R^n, B^n)}{(\log( n) / n)^{1/d}} \le C } \ge 1 - \frac{C}{n^a}.\]
In particular, for every $q>0$, there exists $C =C(d,q,\alpha_R)>0$ such that, for every $n$ sufficiently large,
\[ \EE\sqa{  \dist(R^n, B^n ) ^q } \le C \bra{ \frac{ \log n}{n}}^{q/d}.\]
\end{proposition}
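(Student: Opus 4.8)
The plan is to follow the proof of Proposition~\ref{prop:max-min} almost verbatim, replacing the nearest-neighbour distance $M^n$ by the (red-to-blue) Hausdorff distance $\dist(R^n,B^n)$ and the single i.i.d.\ sample by the two independent samples $R^n$ (of size $n_R\sim\alpha_R n$) and $B^n$ (of size $n_B\sim\alpha_B n$); in particular Lemma~\ref{lem:cubes-filled} and the bounds \eqref{eq:steele-lower}, \eqref{eq:concentration-upper}--\eqref{eq:concentration-lower} are applied to each colour separately, with $n$ replaced by $n_R$ or $n_B$, which only affects numerical constants since $\alpha_R,\alpha_B\in(0,1)$. Fix $a>0$, set $\delta=(\eta\log n/n)^{1/d}$ where $\eta$ is a large parameter (bounded above and below uniformly in $n$, depending on $d,a,\alpha_R$), chosen as in Proposition~\ref{prop:max-min} so that $(3\delta)^{-d}\in\N$, and partition $[0,1]^d$ into cubes of volume $v=\delta^d$.

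For the upper bound I would show that, on the event that \emph{every} cube of the partition contains at least one red and at least one blue point -- which by \eqref{eq:lower-uniform-cubes} applied to each colour has probability $\ge 1-C/n^a$ once $\eta$ is large enough (depending on $d,a,\alpha_R$) -- every point of $R^n\cup B^n$ has a point of the other colour within distance $\diam(Q)=\sqrt{d}\,\delta$, so that $\dist(R^n,B^n)\le\sqrt{d}\,\eta^{1/d}(\log n/n)^{1/d}$.

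For the lower bound, exactly as in Proposition~\ref{prop:max-min}, I would select a sub-family $K$ of $(3\delta)^{-d}=3^{-d}/v$ cubes with pairwise disjoint $3\delta$-neighbourhoods $Q_{k,\delta}$, put $\lambda=c\delta$ with $c=c(d,\eta)>0$ small (so $\lambda<\delta$), and for $k\in K$ consider the event $D_k$ that some red point in the $k$-th cube has no blue point within distance $\lambda$; if any $D_k$ occurs then $\dist(R^n,B^n)\ge\lambda=c\,\eta^{1/d}(\log n/n)^{1/d}$, so it suffices to bound $P(\bigcap_{k\in K}D_k^{c})$. Conditioning on the cube-assignment of each point (the events $A_f$ of the proof of Proposition~\ref{prop:max-min}) the points become independent uniform in their cubes, and since $\lambda<\delta$ each $D_k$ depends only on the points lying in $Q_{k,\delta}$, hence the $(D_k)_{k\in K}$ are conditionally independent. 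Restricting further to the (conditionally deterministic, high-probability) events that no cube of the partition contains more than $\bar{t}\,\eta\log n$ blue points -- by \eqref{eq:upper-uniform-cubes} applied to the blue sample, for a suitable $\bar{t}>2^{2d}$ -- and that at least half of the cubes $Q_k$, $k\in K$, contain a red point -- by a Markov bound on the number of empty cubes, which requires $\eta\alpha_R$ large, and which is the one step with no analogue in the non-bipartite proof (there the good event forced every cube to contain two points) -- the estimate of Proposition~\ref{prop:max-min} via \eqref{eq:steele-lower} yields $P(D_k\mid A_f)\ge n^{-2\bar{t}\eta\omega_d c^d}$ for each red-occupied $Q_k$, $k\in K$. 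Since $|K|=3^{-d}n/(\eta\log n)$, choosing $c$ small enough -- \emph{after} $\eta$ and $\bar{t}$ are fixed -- that $2\bar{t}\eta\omega_d c^d<1$ makes $\prod_{k\in K}(1-P(D_k\mid A_f))\le\exp\!\big(-\tfrac{1}{2}|K|\,n^{-2\bar{t}\eta\omega_d c^d}\big)$ super-polynomially small, whence $P(\bigcap_{k\in K}D_k^{c})\le C/n^a$ and the lower bound holds with probability $\ge 1-C/n^a$. Combining the two bounds and enlarging $C$ gives the stated two-sided estimate; the moment bound then follows from its upper-tail half together with the deterministic bound $\dist(R^n,B^n)\le\sqrt{d}$, by the truncation/layer-cake argument at the end of the proof of Proposition~\ref{prop:max-min} (taking $a=q/d$).

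The main obstacle is the lower bound, and within it the bookkeeping of the constants $\eta,\bar{t},c$: one must first take $\eta$ large (depending on $a$ and $1/\alpha_R$) both to fill every cube with points of each colour and to guarantee, via Markov, that a positive fraction of the separated cubes $Q_k$ is red-occupied; then $\bar{t}$ must be large enough for the uniform blue-occupancy bound; and only after these are fixed can $c$ be taken small enough that the per-cube success probability $n^{-2\bar{t}\eta\omega_d c^d}$ decays strictly slower than $1/|K|\asymp(\log n)/n$, so that the (conditionally) independent trials over $k\in K$ succeed with overwhelming probability. Once this order of quantification is respected, the remaining estimates are the same routine computations as in the non-bipartite case.
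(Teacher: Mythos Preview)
Your proposal is correct and follows essentially the same route the paper indicates (``a minor variation of the proof of Proposition~\ref{prop:max-min}''), with the natural replacement of nearest-neighbour distances by cross-colour ones. One small simplification: your extra Markov step guaranteeing that half the cubes in $K$ are red-occupied is unnecessary, since the high-probability event you already invoke for the upper bound (every cube contains at least one red and one blue point) forces \emph{every} $Q_k$, $k\in K$, to contain a red point, exactly paralleling the role of the event $E$ in the non-bipartite proof.
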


\section{Proof of Theorem~\ref{thm:main-degree}}\label{sec:main-degree}

Throughout this section, for $n \ge 1$, let $R^n = \cur{X_i}_{i=1}^{n_R}$, $B^n= \cur{Y_i}_{i=1}^{n_B}$ be (jointly) i.i.d.~uniformly distributed on $[0,1]^d$ with $n_R+n_B = n$ and
\[ \lim_{n \to \infty} \frac{n_R}{n} = \alpha_R \in (0,1), \quad \lim_{n \to \infty} \frac{n_B}{n} = \alpha_B = 1 -\alpha_R.\]
Let $T^n$ denote the Euclidean bipartite MST on $R^n$, $B^n$, 
and write $\Delta(T^n)$ be the maximum vertex degree of $T^n$.

We split the proof into two separate results.

\begin{lemma}
There exists $\epsilon>0$ such that, as $n \to \infty$, 
\[ P\bra{ \Delta(T^n) < \epsilon \log n} \to 0.\]
\begin{proof} For $n$ sufficiently large, we have $n_R \ge\alpha_R n /2$, $n_B \ge \alpha_B n /2$. Fix any $a>0$ and let $C_1 = C(d,a)\ge 1$ be as in Proposition~\ref{prop:max-min} applied to the variables $(X_i)_{i=1}^{n_R}$, so that the event $E^n$, such that there exists $X_i \in R^n$ with
\[ \min_{j \neq i} |X_i - X_j| \ge C^{-1}_1 \bra{ \log(n_R)/n_R}^{1/d} \ge C^{-1} \bra{ \log(n)/n }^{1/d},\]
(the second inequality being true if $n$ is sufficiently large) has probability $P(E^n)\ge 1-C/n^a$ as $n \to \infty$, for a suitable constant $C = C(d,\alpha_R, a)>0$.
In particular, every point in the cube $Q$ centred at $X_i$ with side length
\[ \delta = \bra{ \log(n)/n }^{1/d} / (4 \sqrt{d} C)\]
is strictly closer to $X_i$ than any other point in $R^n$. Notice that $E^n$ and such cube $Q$ depend on the random variables $R^n$ only. By Lemma~\ref{lem:voronoi}, if $E^n$ holds, every $Y_j \in Q$ will be adjacent in $T^n$ to $X_i$, hence, choosing 
\[ \epsilon = 1/\bra{ 2(4\sqrt{d} C)^d}, \quad \text{ so that } \quad (4 \sqrt{d} C)^d \epsilon=1/2,\]
 and writing $N_{B^n}(Q)$ for the number of elements in $B^n \cap  Q$, we have
\[ \begin{split} P( \Delta(T_n) <  \epsilon \log(n) ) & \le P((E^n)^c) + P( N_{B^n}(Q) < n|Q|/2, E^n) \\
& \le \frac{C}{n^a} + \EE\sqa{ I_{E^n} P \bra{N_{B^n}(Q) < n \delta^d/2 | \cur{X_i}_{i=1}^{n_B} }}.
\end{split}\]
By independence, the conditional law of $N_{B^n}(Q)$ is Binomial with parameters $\bra{ n_B, \delta^d}$, hence we may use \eqref{eq:concentration-lower} to obtain
\[ P \bra{N_{B^n}(Q) < \epsilon \log(n) | \cur{X_i}_{i=1}^{n_B} } \le \exp\bra{ -n_B \delta^d F(1/2)}.\]
Since $n_B \ge \alpha_B n /2$, $\delta^d = \log(n)/ (n (4 \sqrt{d} C))$ and $F(1/2)>0$, it follows that, as $n \to \infty$, 
\[ \exp\bra{ -n_B \delta^d F(1/2)} \to 0.\qedhere\]

\end{proof}
\end{lemma}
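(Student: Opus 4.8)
The plan is to show that, with probability tending to $1$, a single red vertex already has degree of order $\log n$ in $T^n$; the natural candidate is a red point lying unusually far from all other red points.

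\textbf{Step 1 (an isolated red point).} Apply Proposition~\ref{prop:max-min} to the red family $(X_i)_{i=1}^{n_R}$ alone. Fixing $a>0$, there are a constant $C_1=C_1(d,a)$ and an event $E^n$, depending only on $R^n$, with $P((E^n)^c)\le C_1/n_R^a$, on which $M^{n_R}\ge C_1^{-1}(\log n_R/n_R)^{1/d}$; that is, some $X_{i_0}$ satisfies $\min_{j\ne i_0}|X_{i_0}-X_j|\ge C_1^{-1}(\log n_R/n_R)^{1/d}$. Since $n_R\ge\alpha_R n/2$ and $n_B\ge\alpha_B n/2$ for $n$ large, this yields $\min_{j\ne i_0}|X_{i_0}-X_j|\ge C^{-1}(\log n/n)^{1/d}$ for a suitable $C=C(d,\alpha_R,a)$, with $P((E^n)^c)\le C/n^a$.

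\textbf{Step 2 (forced edges from the cut property).} On $E^n$, pick a cube $Q\subseteq[0,1]^d$ of side $\delta=(\log n/n)^{1/d}/(4\sqrt d\,C)$ with $X_{i_0}\in Q$ (possible since $\delta<1$ for $n$ large, just by translating $Q$ inside the unit cube). For every $x\in Q$ one has $|x-X_{i_0}|\le\diam(Q)=\sqrt d\,\delta<\tfrac12 C^{-1}(\log n/n)^{1/d}$, so by the triangle inequality $|x-X_k|\ge C^{-1}(\log n/n)^{1/d}-\sqrt d\,\delta>|x-X_{i_0}|$ for every other red point $X_k$; hence $X_{i_0}$ is strictly the nearest red point of every $x\in Q$. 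In particular, for each blue point $Y_j\in Q$ the edge $\{X_{i_0},Y_j\}$ is the unique cheapest edge of $K_{n_R,n_B}$ incident to $Y_j$, so it lies in $T^n$ by Lemma~\ref{lem:voronoi}. Therefore $\Delta(T^n)\ge\deg_{T^n}(X_{i_0})\ge N_{B^n}(Q)$, the number of blue points in $Q$.

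\textbf{Step 3 (binomial concentration for the blue count).} Conditionally on $R^n$ — which determines $E^n$ and the cube $Q$ — the count $N_{B^n}(Q)$ is Binomial with parameters $(n_B,\delta^d)$, hence of mean $n_B\delta^d$, which is of order exactly $\log n$ by the choice of $\delta$ and $n_B\ge\alpha_B n/2$. Choosing $\epsilon>0$ strictly below half this order and applying the lower-tail estimate \eqref{eq:concentration-lower} with $t=\tfrac12$ gives, on $E^n$, $P\bra{N_{B^n}(Q)<\epsilon\log n\mid R^n}\le\exp\bra{-n_B\delta^d F(1/2)}$, and since $F(1/2)>0$ while $n_B\delta^d$ grows like a positive multiple of $\log n$, the right-hand side is $o(1)$. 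Combining,
\[ P(\Delta(T^n)<\epsilon\log n)\le P((E^n)^c)+\EE\sqa{I_{E^n}\,P\bra{N_{B^n}(Q)<\epsilon\log n\mid R^n}}\le\frac{C}{n^a}+\exp\bra{-n_B\delta^d F(1/2)}\longrightarrow0. \]

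The only delicate point is the constant bookkeeping in Step 2: the cube must be small enough (the $4\sqrt d$ factor, so that $\diam(Q)$ beats the separation of $X_{i_0}$ from the other red points uniformly over $Q$) yet large enough that its expected blue content stays $\Theta(\log n)$. Everything else is the standard two-stage conditioning — first on the red points, to fix the isolated point and its surrounding cube, then a binomial lower-tail bound for the blue points falling inside it.
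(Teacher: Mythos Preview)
Your proof is correct and follows essentially the same approach as the paper's: find an isolated red point via Proposition~\ref{prop:max-min}, use Lemma~\ref{lem:voronoi} to force all blue points in a surrounding small cube to be its neighbours in $T^n$, and finish with a binomial lower-tail bound for the blue count. Your handling is in fact slightly tidier on two minor points: you explicitly translate $Q$ to sit inside $[0,1]^d$ so that the conditional law of $N_{B^n}(Q)$ is genuinely Binomial$(n_B,\delta^d)$, and you calibrate $\epsilon$ against $n_B\delta^d$ rather than $n\delta^d$.
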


\begin{lemma}\label{lem:lower-bound-max-deg}
For every $a>0$, exists $C = C(d,a, \alpha_R) >0$ such that, for $n$ sufficiently large, 
\[ P\bra{ \Delta(T^n) > C \log(n) } \le \frac{C}{n^a}.\]
In particular, for every $q>0$, there exists $C =C(d, q, \alpha_R)>0$ such that, for every $n$ large enough,
\begin{equation} \label{eq:moment-delta-tn}\EE\sqa{ \Delta(T^n) ^q } \le C \bra{ \log n}^q.\end{equation}
\begin{proof}
For $n$ sufficiently large, we have $n_R \ge\alpha_R n /2$, $n_B \ge \alpha_B n /2$. Fix $a>0$ and let $C_1 = C_1(d,a, \alpha_R)\ge 1$ be as in Proposition~\ref{prop:max-min-bipartite} applied to the variables $(X_i)_{i=1}^{n_R}$, $(Y_j)_{j=1}^{n_B}$ so that, if $n$ is sufficiently large, the event 
\[  \cur{ \dist(R^n,B^n) > C_1 (\log(n) /n)^{1/d} }\]
has probability smaller than $C_1/n^a$. We claim that there exist $\lambda=\lambda(d,a, \alpha_R) > 8 C_1$ and $C = C(d,a,\alpha_R)$ such that the following conditions hold, for $n$ sufficiently large:
\begin{enumerate}
\item letting $v_1 = \bra{\lambda/8}^d \log(n)/n$, then
\[ P\bra{ N^*_{R^n}(v_1) >  \lambda^d \log n } \le \frac{C}{n^a} \quad \text{and} \quad P\bra{ N^*_{B^n}(v_1) >  \lambda^d \log n } \le \frac{C}{n^a};\]
\item letting $v_2 = \sqa{ \bra{ \lambda/8 - C_1)}/(\sqrt{d})}^{d}  \log(n)/n$, then
\[ P\bra{ \bra{N_{R^n}}_*(v_2) =0 } \le  \frac{C}{n^a}, \quad P\bra{ \bra{N_{B^n}}_*(v_2) =0 } \le  \frac{C}{n^a}.
\]
\end{enumerate}
Once the claim is proved, it is immediate to show that on the event
\[\begin{split} 
E = \Big\{ \dist(R,B) > C_1 (\log(n) /n)^{1/d}, & \max\cur{  N^*_{R^n}(v_1),  N^*_{B^n}(v_1)} \le \lambda^d \log n \\
& \min\cur{  \bra{N_{R^n}}_*(v_2), \bra{N_{B^n}}_*(v_2)} \ge 1 \Big\}
\end{split}\]
that satisfies $P(E^c) < C/n^a$ (with a different constant $C$), it must be $\Delta(T^n) \le \lambda^d \log(n)$. Indeed, if $\Delta(T^n) > \lambda^d \log n$ assuming without loss of generality that $X_i \in R^n$ has degree larger than $\lambda^d \log n$, it follows from $N_{B^n}^*(v_1) \le \lambda^d \log n$ that there must be a node in $Y_j \in B^n$ adjacent to $X_i$ that does not belong to the cube with volume $v_1$ centered at $X_i$. In particular,
\[ |Y_j-X_i| \ge \frac{\lambda}{8} \bra{ \log(n)/n}^{1/d} > C_1 \bra {\log(n)/n}^{1/d} \ge \dist(R^n,B^n).\]
By Lemma~\ref{lem:empty-cone}, there exists a cube $Q$ with $Q \cap R^n = \emptyset$ and
\[ |Q| \ge \sqa{ \bra{ \lambda/8 - C_1)}/ (2 \sqrt{d})}^{d}  \frac{\log(n)}{n} = v_2,\]
contradicting $(N_{R^n})_*(v_2) >0$.

To prove the claim, we apply Lemma~\ref{lem:cubes-filled}. Indeed, \eqref{eq:upper-uniform-cubes} with $t= 8^d n/n_R$ gives
\[\begin{split} P\bra{ N^*_{R^n}(v_1) >  \lambda^d \log n } & = P\bra{ N^*_{R^n}(v_1) >  8^d (n/n_R)  \cdot v_1 n_R  } \\
& \le \frac{1}{2^d v_1} \exp\bra{-n_R v_1 2^dF(2^dn/n_R)} \\
& \le \frac{2^{2d} n }{\lambda^d \log(n)} \exp\bra{ - \log(n) (n_R/n) \cdot \lambda^d2^{-2d}F(2^d)} \le  \frac{C}{n^a},
\end{split}\]
provided that $n$ is sufficiently large and $\lambda^d \alpha_R 2^{-2d-1}F(2^d) \ge a+1$. Conversely, we use \eqref{eq:lower-uniform-cubes} with $t = 1/n_R v_2 = \sqa{ \bra{ \lambda/8 - C_1)}/( \sqrt{d})}^{-d} n/(n_R\log(n)) < 2^{-2d}$ (if $n$ is sufficiently large) so that
\[  P\bra{ N^*_{R^n}(v_1) < 1 } \le  2^{2d} \sqa{ \bra{ \lambda/8 - C_1)}/(2 \sqrt{d})}^{-d} \frac{n}{\log n} \exp\bra{ -n_R v_2 2^{-2d} F(t2^{2d})} \le \frac{C}{n^a},\]
provided that we choose $\lambda$ large enough such that 
\[ \sqa{ \bra{ \lambda/8 - C_1)}/(2 \sqrt{d})}^{d}2^{-2d-1} \ge a+1,\]
since for $n$ sufficiently large, we have $F(t2^{2d}) > 1/2$. 

Finally, \eqref{eq:moment-delta-tn} follows since trivially $\Delta(T^n) \le n$, hence, choosing $a = q$, we bound from above
\[ \EE\sqa{  \Delta(T^n)^q } \le n^q  P( \Delta(T^n) > C \log n) + \bra{ C \log n}^q \le C+ \bra{ C \log n}^q. \qedhere\]
\end{proof}
\end{lemma}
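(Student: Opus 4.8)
The plan is to combine the ``empty region'' property of the bipartite MST from Lemma~\ref{lem:empty-cone} with the uniform counting bounds of Lemma~\ref{lem:cubes-filled} and the Hausdorff-distance estimate of Proposition~\ref{prop:max-min-bipartite}. Fix $a>0$, let $\lambda = \lambda(d,a,\alpha_R)$ be a large constant to be chosen, and introduce two scales of order $\log(n)/n$, namely $v_1 \approx (\lambda/8)^d \log(n)/n$ and a suitably smaller multiple $v_2 \approx ((\lambda/8-C_1)/\sqrt d)^d\log(n)/n$, the latter being the volume produced by Lemma~\ref{lem:empty-cone}. The first step is to exhibit a ``good event'' $E$ of probability at least $1-C/n^a$ on which (i) $\dist(R^n,B^n)\le C_1(\log(n)/n)^{1/d}$; (ii) $N^*_{R^n}(v_1), N^*_{B^n}(v_1)\le \lambda^d\log n$; and (iii) $(N_{R^n})_*(v_2), (N_{B^n})_*(v_2)\ge 1$. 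Item (i) is exactly Proposition~\ref{prop:max-min-bipartite}; items (ii) and (iii) follow from \eqref{eq:upper-uniform-cubes} and \eqref{eq:lower-uniform-cubes} applied separately to the red and to the blue point clouds, with $t$ of order $n/n_R$ in \eqref{eq:upper-uniform-cubes} and of order $n/(n_R v_2\log n)$ in \eqref{eq:lower-uniform-cubes}, provided $\lambda$ is large enough that the relevant values of the exponent $F$ exceed $a+1$.

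On $E$ I claim that $\Delta(T^n)\le \lambda^d\log n$. Suppose not; by the symmetry between the two colours, assume some $X_i\in R^n$ has more than $\lambda^d\log n$ neighbours in $T^n$, all of which are necessarily blue. Since $N^*_{B^n}(v_1)\le \lambda^d\log n$, these neighbours cannot all lie in the cube of volume $v_1$ centred at $X_i$, so there is a neighbour $Y_j$ with $|X_i-Y_j|\gtrsim \lambda(\log(n)/n)^{1/d} > C_1(\log(n)/n)^{1/d}\ge \dist(R^n,B^n)$, the strict inequality holding once $\lambda$ exceeds a fixed multiple of $C_1$. Applying Lemma~\ref{lem:empty-cone} with $r=X_i$, $b=Y_j$, $\delta=|X_i-Y_j|$ yields a cube $Q\subseteq[0,1]^d$ with $Q\cap R^n=\emptyset$ and $|Q|\ge \bra{(\delta-\dist(R^n,B^n))/(2\sqrt d)}^d\ge v_2$, contradicting $(N_{R^n})_*(v_2)\ge 1$. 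This gives $P(\Delta(T^n)>C\log n)\le C/n^a$ with $C=\lambda^d$. The moment bound \eqref{eq:moment-delta-tn} then follows from the trivial estimate $\Delta(T^n)\le n$: splitting the expectation at the level $C\log n$ and choosing $a=q$ gives $\EE\sqa{\Delta(T^n)^q}\le n^q\,P(\Delta(T^n)>C\log n)+(C\log n)^q\le C+(C\log n)^q$.

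The bulk of the work, and the only genuine difficulty, is calibrating $C_1$, $\lambda$ and the scales $v_1$, $v_2$ so that all requirements hold at once: $v_1$ must be small enough that $N^*_{B^n}(v_1)\le \lambda^d\log n$ (and likewise for red) with the required $n^{-a}$ decay, yet a cube of volume $v_1$ must have side exceeding the typical Hausdorff distance $C_1(\log(n)/n)^{1/d}$; and the cube extracted from Lemma~\ref{lem:empty-cone} must still have volume $v_2\gtrsim \log(n)/n$ large enough that $P\bra{(N_{R^n})_*(v_2)=0}$ beats $n^{-a}$. Each of these forces $\lambda$ to be large in terms of $a$, $d$ and $\alpha_R$ through the Cram\'er-type exponents of Lemma~\ref{lem:cubes-filled}; once this bookkeeping is arranged, the remainder is a routine union bound.
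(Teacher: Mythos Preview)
Your proposal is correct and follows essentially the same route as the paper: the same good event built from Proposition~\ref{prop:max-min-bipartite} and the two-sided cube-counting bounds of Lemma~\ref{lem:cubes-filled}, the same contradiction via Lemma~\ref{lem:empty-cone} (a high-degree vertex forces a long MST edge, which in turn forces an empty cube of volume $\gtrsim \log(n)/n$), and the same splitting argument for the moment bound. You even state the Hausdorff-distance condition with the correct inequality $\le$, whereas the paper's displayed event $E$ has a typo ($>$).
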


\section{Proof of Theorem~\ref{thm:main-bipartite-euclidean}}\label{sec:avram-berstimas}

We first extend \cite[Theorem 1]{avram_minimum_1992} to the bipartite case. Let $G =(V, E, w)$ be a random weighted graph, i.e., $(w_e)_{e \in E}$ are random variables. To simplify, we assume throughout this section that $G$ is the complete graph over $V =\cur{1,\ldots, m}$ for some $m \ge 1$, but allow weights $w(e) \in [0,\infty]$. Recall that connection between nodes is meant only along paths consisting of edges with finite weight: we assume in particular that $G$ is a.s.\ connected. 
The number of connected components of $G(z)$ can be written as
\begin{equation}\label{eq:CGz}
 C_{G(z)}= \sum_{k=1}^m \sum_{i=1}^{m}\frac{X_{i,k,G(z)}}{k},
 \end{equation}
where the random variable $X_{i,k,G(z)} \in \cur{0,1}$ indicates whether $i \in V$ belongs to a component of $G(z)$ having exactly $k$ elements. Similarly, for the number of connected components having at least $k$ nodes,
\[  C_{k,G(z)}= \sum_{\ell=k}^m \sum_{i=1}^{m}\frac{X_{i,\ell,G(z)}}{\ell},\]

To estimate the expectation of $X_{i,k,G(z)}$, in \cite{avram_minimum_1992} it is assumed that $\bra{w_{i,j}}_{i,j \in V}$ are exchangeable random variables. To extend the validity of their results to the bipartite case we relax this condition by requiring that the joint law of the weights is invariant with respect to the symmetries of an underlying graph model (such as a complete bipartite graph). Let us give the following general definition. 

\begin{definition}
On a random weighted graph $G = (V, E, w)$, nodes $i, j \in V$ are said equivalent in law if there exists a bijection $\pi: V \to V$ such that $\pi(i) = j$ and $\bra{w_{k,\ell}}_{k,\ell\in V}$ have the same joint law as $\bra{w_{\pi(k),\pi(\ell)}}_{k,\ell\in V}$.
\end{definition}

Clearly, this defines an equivalence relation, which is relevant for our purposes since, if $i, j \in V$ are equivalent in law, then for every $k$, $z \ge 0$ the random variables $X_{i,k,G(z)}$, $X_{j,k,G(z)}$ have the same law. Therefore, when computing the expectation $\EE\sqa{C_{G(z)}}$ using \eqref{eq:CGz}, we are reduced to a summation upon $k$ and the equivalence classes. If the weights are exchangeable, then there is only one equivalence class, but this is also the case  a random Euclidean bipartite graph with $V = R\cup B$ and $|R| = |B|$. 
To deal with bipartite graphs with $|R| \neq |B|$ we consider the case of two (non empty) equivalence classes $R$ and $B$. We introduce the functions  
\[  P_{k, R, G(z)}= \EE\sqa{ X_{i,k,G(z)}} \quad \text{for $i \in R$, and }\quad   P_{k, B,G(z)}= \EE\sqa{ X_{j,k,G(z)}} \quad \text{for $j \in B$,}\]
i.e., the probability that a given node in $R$ (respectively in $B$) belongs to a connected component of $G(z)$ with exactly $k$ elements.
Taking the expectation in \eqref{eq:ab-general} and \eqref{eq:CGz}, we deduce that
\begin{equation}\label{eq:ee-ab-general} \EE\sqa{ \C(G) } = \int_0^\infty \bra{ \sum_{k=1}^{|V|} \frac{1}{k} \bra{ |R|  P_{k, R, G(z)} + |B|  P_{k,B, G(z)}} -1  }d z .\end{equation}




Consider now a sequence of random graphs $\bra{G^n}_{n=1}^\infty = \bra{(V^n, E^n, w^n)}_{n=1}^\infty$, each with two equivalence classes $V^n = R^n \cup B^n$, and write, for brevity,
\[ C^n(z)= C_{G^n(z)}, \quad C^n_{k}(z) = C_{k,G^n(z)},\]
\[ P_{k,R}^n(z) = P_{k,  R, G^n(z)} \quad \text{and} \quad P_{k,B}^n(z) = P_{k, B, G^n(z)}.\]
 For a  (pseudo-dimension) parameter  $d >1$ we introduce the following assumptions:
\begin{enumerate}[a)]
\item \label{ab-hp-1}For any $k\ge 1$, $y>0$,
\[ \lim_{n \to \infty } P_{k,R}^n\bra{ (y/n)^{1/d} } = f_{k,R}(y), \quad \text{and} \quad \lim_{n \to \infty } P_{k,B}^n\bra{ (y/n)^{1/d} } = f_{k,B}(y),\]
where convergence is pointwise and dominated in the following sense: there exists a function
$\ell_{k}$ such that, for every $y>0$,
\[ \sup_{n} P_{k,R}^n\bra{ (y/n)^{1/d} }+ P_{k,B}^n\bra{ (y/n)^{1/d} } \le \ell_k(y)\]
and 
\[ \int_{0}^{\infty} \ell_{k}(y)y^{\frac{1}{d}-1}dy < \infty.\]
%
%

\item \label{ab-hp-2} It holds 
\[
 \lim_{k \to \infty} \limsup_{n\to \infty
 } \abs{ \frac{1}{n^{1-\frac{1}{d}}} \int_{0}^{\infty} \bra{ \EE\sqa{C_{k}^n(z)}-1} dz} = 0.\]
\end{enumerate}


The following result extends \cite[Theorem 1]{avram_minimum_1992} to the bipartite case.
\begin{theorem}\label{theoBert}
Let $d >1$ and $\bra{G^n}_{n=1}^{\infty}$ be a sequence of random graphs, each with two equivalence classes $V^n = R^n \cup B^n$, satisfying assumptions \ref{ab-hp-1}), \ref{ab-hp-2}), and
\[ \lim_{n \to \infty} \frac{|R^n|}{n} = \alpha_R\in (0,1) \quad\text{and} \quad \lim_{n \to \infty} \frac{|B^n|}{n} = \alpha_B  = 1-\alpha_R.\]
Then, 
\[
\lim_{n \to \infty} \frac{\EE\sqa{\C(G^n)}}{n^{1-1/d}} = \frac{1}{d}\sum_{k=1}^{\infty} \frac{1}{k}\int_{0}^{\infty}\bra{ \alpha_Rf_{k,R}(y)+ \alpha_B f_{k,B}(y)}y^{\frac{1}{d}-1}dy.
\]
\end{theorem}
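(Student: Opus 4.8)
Following the strategy of \cite[Theorem~1]{avram_minimum_1992}, the plan is to take expectations in \eqref{eq:ab-general}, truncate the resulting sum over component sizes at a level $K$, pass to the limit $n\to\infty$ in the finitely many low-order terms using assumption \ref{ab-hp-1}), control the high-order remainder uniformly in $n$ using assumption \ref{ab-hp-2}), and finally interchange the two limits by a soft sandwiching argument.

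Concretely, I would start from the exact formula \eqref{eq:ee-ab-general}. By \eqref{eq:CGz} one has, for every $K\ge1$, $\EE[C^n_{K+1}(z)]=\sum_{k\ge K+1}\tfrac1k\bra{|R^n|P^n_{k,R}(z)+|B^n|P^n_{k,B}(z)}$, so splitting the sum in \eqref{eq:ee-ab-general} at $k=K$ and integrating in $z$ yields the decomposition $\tfrac{\EE[\C(G^n)]}{n^{1-1/d}}=S_{K,n}+T_{K,n}$, where
\[ S_{K,n}:=\frac{1}{n^{1-1/d}}\sum_{k=1}^{K}\frac1k\int_0^\infty\bra{|R^n|P^n_{k,R}(z)+|B^n|P^n_{k,B}(z)}\,dz,\qquad T_{K,n}:=\frac{1}{n^{1-1/d}}\int_0^\infty\bra{\EE[C^n_{K+1}(z)]-1}\,dz. \]
For the finite sum $S_{K,n}$ I would use the substitution $z=(y/n)^{1/d}$, under which $dz=\tfrac1d\,n^{-1/d}y^{1/d-1}\,dy$, turning each summand into $\tfrac{|R^n|}{n}\cdot\tfrac1d\int_0^\infty P^n_{k,R}\bra{(y/n)^{1/d}}y^{1/d-1}\,dy$ plus the analogous $B$-term; by assumption \ref{ab-hp-1}) the integrand converges pointwise to $f_{k,R}(y)\,y^{1/d-1}$ and is dominated by the $y^{1/d-1}\,dy$-integrable function $\ell_k$, so (using also $|R^n|/n\to\alpha_R$ and $|B^n|/n\to\alpha_B$) dominated convergence gives, for every fixed $K$,
\[ \lim_{n\to\infty}S_{K,n}=S_K:=\frac1d\sum_{k=1}^{K}\frac1k\int_0^\infty\bra{\alpha_R f_{k,R}(y)+\alpha_B f_{k,B}(y)}\,y^{1/d-1}\,dy; \]
since $f_{k,R}+f_{k,B}\le\ell_k$, each $S_K$ is finite and $(S_K)_K$ is non-decreasing.

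For the remainder, assumption \ref{ab-hp-2}) applied with $k=K+1$ is precisely the statement that $\tau_K:=\limsup_{n\to\infty}|T_{K,n}|\to0$ as $K\to\infty$. Combining the above, one gets $S_K-\tau_K\le\liminf_n\tfrac{\EE[\C(G^n)]}{n^{1-1/d}}\le\limsup_n\tfrac{\EE[\C(G^n)]}{n^{1-1/d}}\le S_K+\tau_K$ for every $K$. Picking one $K_0$ with $\tau_{K_0}<\infty$ bounds the $\limsup$; since $S_K\uparrow$ this, together with $\liminf_n(\cdot)\ge S_K-\tau_K$ for large $K$, forces $S_\infty:=\lim_KS_K$ to be finite, and then letting $K\to\infty$ in the sandwich yields $\lim_n\tfrac{\EE[\C(G^n)]}{n^{1-1/d}}=S_\infty$, which is exactly the asserted series. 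A few routine points accompany this: each $\int_0^\infty P^n_{k,R}(z)\,dz$ is finite by the domination in \ref{ab-hp-1}), so $S_{K,n}$ is well defined; the passage \eqref{eq:ab-general}$\to$\eqref{eq:ee-ab-general} and the change of variables only involve nonnegative integrands, so Tonelli applies; and $\EE[\C(G^n)]<\infty$ for large $n$ follows a posteriori from the boundedness obtained above (or directly, in the Euclidean application, from a.s.\ finiteness of $\C(G^n)$ since $G^n$ is a.s.\ connected with bounded weights).

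Given assumptions \ref{ab-hp-1}) and \ref{ab-hp-2}), the argument is entirely soft; the one point that needs genuine care is not to presume a priori that the limiting series converges, its convergence being delivered by \ref{ab-hp-2}) rather than by \ref{ab-hp-1}). I would expect the real difficulty to lie not in this theorem but in the subsequent verification of \ref{ab-hp-1}) and especially \ref{ab-hp-2}) for the random Euclidean bipartite model, presumably through the component-merging/localization estimate of Lemma~\ref{lem:prim} (together with the degree and Hausdorff-distance bounds of Section~\ref{sec:main-degree}), and that is outside the scope of the present statement.
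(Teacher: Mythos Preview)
Your proof is correct and follows essentially the same route as the paper's: split the expectation \eqref{eq:ee-ab-general} into a truncated sum over $k\le K$ and the remainder $\int_0^\infty(\EE[C^n_{K+1}(z)]-1)\,dz$, handle the former by the substitution $z=(y/n)^{1/d}$ and dominated convergence via assumption \ref{ab-hp-1}), and dispose of the latter using assumption \ref{ab-hp-2}). If anything, your write-up is slightly more careful than the paper's, which is terse about the interchange of the limits $n\to\infty$ and $K\to\infty$; your explicit sandwich $S_K-\tau_K\le\liminf\le\limsup\le S_K+\tau_K$ and the observation that finiteness of $S_\infty$ is \emph{forced} by \ref{ab-hp-2}) rather than assumed make the logic cleaner.
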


\begin{proof} 
By \eqref{eq:ab-general},  for any $n \ge 1$, $k \ge 1$, we decompose
\[\begin{split}
 \frac{\EE\sqa{\C(G^n)}}{n^{1-1/d}} & = \frac{1}{n^{1-1/d}} \int_{0}^{\infty}\bra{\sum_{k=1}^{|V^n|}\frac{1}{k} \bra{ |R^n| P_{k, R}^n(z) +  |B^n| P_{k, B}^n(z)}-1}dz\\
& =   n^{1/d}  \int_{0}^{\infty} \bra{\sum_{\ell=1}^{k-1}\frac{1}{\ell} \bra{ \frac{|R^n|}{n} P_{\ell, R}^n(z) +  \frac{|B^n|}{n} P_{\ell, B}^n(z)}-\frac{1}{n}}dz  \\
& \quad + \frac{1}{n^{1-1/d}} \int_{0}^{\infty} \bra{ \EE\sqa{C_{k}^n(z)}-1} dz.
\end{split}\]
Assumption \ref{ab-hp-2}) gives that in the limit $n \to \infty$, $k \to \infty$ the second term gives no contribution. Hence, it is sufficient to let $n \to \infty$ and then $k\to \infty$ in first term. Actually, since $n^{1-1/d} \to \infty$, we only need to prove that
\[\begin{split} & \lim_{k \to \infty} \lim_{n\to \infty} n^{1/d}  \int_{0}^{\infty} \sum_{\ell=1}^{k-1}\frac{1}{\ell} \bra{ \frac{|R^n|}{n} P_{\ell, R}^n(z) +  \frac{|B^n|}{n} P_{\ell, B}^n(z)}dz \\
& \quad = \frac{1}{d}\sum_{k=1}^{\infty} \frac{1}{k}\int_{0}^{\infty}\bra{ \alpha_Rf_{k,R}(y)+ \alpha_B f_{k,B}(y)}y^{1/d-1}dy,\end{split}\]
which follows by dominated convergence, because of assumption \ref{ab-hp-2}), after the change of variables $z = (y/n)^{1/d}$. 
\end{proof}
\renewcommand{\T}{\mathbb{T}}

We now apply the above theorem to the bipartite MST problem on the $d$-dimensional flat torus $\T^d = \R^d/\mathbb{Z}^d$, endowed with the flat distance 
\[ \dist_{\T^d}(x,y) := \inf_{z \in\mathbb{Z}^d} |x-y+z|.\] 

\begin{theorem}\label{formula-torus}
Let $d \ge 1$, $n\ge 1$, $R^n = \cur{X_i}_{i=1}^{n_R}$, $B^n = \cur{Y_i}_{i=1}^{n_B}$ be (jointly) i.i.d.~uniformly distributed on $\T^d$ with $n_R+n_B = n$ and
\[ \lim_{n \to \infty} \frac { n_R }{n} = \alpha_R, \quad \lim_{n \to \infty} \frac { n_B}{n} = \alpha_B = 1-\alpha_R.\]
Then, for every $p \in (0,d)$,
\[ \lim_{n \to \infty} \frac{ \EE\sqa{ \C^p(R^n,B^n)}}{ n^{1-p/d} }= \beta_{bMST}(d,p),\]
with $\beta_{bMST}(d,p)$ as in Theorem~\ref{thm:main-bipartite-euclidean}.


\begin{proof}
We apply Theorem~\ref{theoBert} to the random bipartite graph $G^n$ over $V^n=R^n\cup B^n$, and $w^n(X_i, Y_j) = \dist_{\T^d}(X_i, Y_j)^p$ -- that can be naturally identified with a graph over $\cur{1,\ldots, n}$ (recall that we allow for infinite weights). We show separately that assumptions \ref{ab-hp-1}) and \ref{ab-hp-2}) hold with $d/p>1$ instead of $d$.


We introduce some notation: for $k_R$, $k_B \in \mathbb{N}$, $z\ge 0$, let 
\[ \Theta_{\T^d}(k_R, k_B, z) \subseteq (\T^d)^{k_R} \times (\T^d)^{k_B}\]
 denote the set of (ordered) points $( \bra{r_i}_{i=1}^{k_R} , \bra{b_j}_{j=1}^{k_B})$ such that, in the associated bipartite graph with weights  $( \dist_{\T^d}(r_i,b_j)^p)_{i,j}$, the subgraph with all edges having weight less than $z$ is connected (or equivalently, there exists a bipartite spanning tree having all edges with weight less than $z$). For a set $A \subseteq \T^d$, $z\ge 0$, write 
 \[ D_{\T^d}(A,z) = \cur{x \in \T^d\, : \, \dist_{\T^d}(A,x)^p  \le z}.\]




Recall that by definition $P_{k,n,R}(z)$ is the probability that a fixed vertex in $R$, say $X_1$, belongs to a component of the subgraph $G^n(z)$ having exactly $k$ nodes. We disintegrate upon the nodes in $R$ and in $B$ belonging to such component. Clearly, only their numbers are relevant, not the precise labels (except for $X_1$ that is fixed). Therefore, we compute the probability $I_R(k_R, k_B, z)$ that $X_1$ belongs to a component with $k_R$ nodes $\cur{X_i}_{i=1}^{k_R} \subseteq R$ and $k_B$ nodes $\cur{Y_j}_{j=1}^{k_B} \subseteq B$, with $k_R+k_B = k$, which is precisely described as follows:
\begin{enumerate}[i)]
\item  the set $( \cur{X_i}_{i=1}^{k_R}, \cur{Y_j}_{j=1}^{k_B})$  belongs to $\Theta_{\T^d}(k_R, k_B, z)$,
\item \label{e-1}$\dist_{\T^d}( \cur{Y_j}_{j=1}^{k_B}, X_i ) >z$, i.e. $X_i \notin D_{\T^d} ( \cur{Y_j}_{j=1}^{n_B}, z)$, for every $i > k_R$,
\item \label{e-2} $\dist_{\T^d} (\cur{X_i}_{i=1}^{k_R},  Y_j) >z$, i.e. $Y_j \notin D_{\T^d} \bra{\cur{X_i}_{i=1}^{k_R}, z}$, for every $j > k_B$.
\end{enumerate}
Conditioning upon $\cur{X_i}_{i=1}^{k_R}$, $\cur{Y_j}_{j=1}^{k_B}$ and using independence for the events \ref{e-1}) and \ref{e-2}), we obtain the following expression for the probability:
\begin{equation}\begin{split} \label{eq:I-torus} & I_R(k_R, k_B, z) =\\
&  \int_{\Theta_{\T^d}(k_R, k_B, z)}  \bra{ 1 - |D_{\T^d}(\cur{b_j}_{j=1}^{k_B}, z)|}^{n_R - k_R}\bra{ 1 - |D_{\T^d} (\cur{r_i}_{i=1}^{k_R}, z)|}^{n_B - k_B} d r d b, \end{split} \end{equation}
where $d r d b$ stands for integration performed with respect to the variables $\cur{r_i}_{i=1}^{k_R}$ and $\cur{b_j}_{j=1}^{k_B}$. Summing upon all the ${n_R \choose k_R-1} {n_B \choose k_B}$ different choices of labellings (recall that $X_1$ is kept fixed) and upon $k_R \ge 1$, $k_B$ with $k_R+k_B = k$ gives
\begin{equation}\label{eq:pkr} P_{k,R}^n(z)= \sum_{\substack{ k_R+k_B = k \\ k_R \ge 1}} {n_R \choose k_R-1} {n_B \choose k_B} I(k_R, k_B, z) .\end{equation}
We now replace integration in \eqref{eq:I-torus} from $(\T^{d})^{k}$ to $(\R^d)^k$. This is possible provided that $z$ is small enough, so that only the local structure is relevant.
We first notice that, by invariance with respect to translations, we can always fix one variable, say $r_1 = 0$. We thus integrate upon the configurations $\cur{r_i}_{i=2}^{k_R}$, $\cur{b_j}_{j=1}^{k_B}$ such that adding $0$ to the set $\cur{r_i}_{i=2}^{k_R}$ yields a bipartite graph that contains a spanning tree $T$ with edge weights smaller than $z$. 
Now, if $z \le 1/(4k)$, it follows that such tree is contained in a ball of center $0 \in \T^d$ and radius $1/4$, hence it can be isometrically lifted to a tree on $(-1/2,1/2)^d \subseteq \R^d$. Similarly, both $D(\cur{b_j}_{j=1}^{k_B}, z)$, $D(\cur{r_i}_{i=1}^{k_R},z)$  are then contained in a ball of center $0$ and radius $1/2$, hence their volumes computed on $\T^d$ coincide with those of their lift on $(-1/2, 1/2)^d \subseteq \R^d$. To parallel the notation, we therefore the sets $\Theta_{\R^d}(k_R, k_B, z) \subseteq (\R^d)^{k_R} \times (\R^d)^{k_B}$, analogous to $\Theta_{\T^d}(k_R, k_B, z)$ -- notice that 
\[ \Theta_{\R^d}(k_R, k_B, 1) = \Theta(k_R, k_B)\]
defined in \eqref{eq:def-theta} --  and write
\[ D_{\R^d}(A,z) = \cur{x \in \R^d\, : \, \dist(A,x)^p \le z},\]
for $A \subseteq \R^d$ -- notice that $D(A,1) = D(A)$ defined in \eqref{eq:def-da}. For $z \le 1/(4k)$, we have therefore 
\[\begin{split} &  I_R(k_R, k_B, z) =\\
&  \int_{\Theta_{\R^d}(k_R, k_B, z)}  \bra{ 1 - |D_{\R^d}(\cur{b_j}_{j=1}^{k_B}, z)|}^{n_R - k_R}\bra{ 1 - |D_{\R^d} (\cur{r_i}_{i=1}^{k_R}, z)|}^{n_B - k_B} \delta_0(r_1) d r d b,\end{split}\]
where here $drdb$ denotes Lebesgue integration with respect to the remaining $k-1$ variables in $\R^d$. 

Since for every $A \subseteq \R^d$, $z>0$,
\[ D(A, z) = z^{1/p} D( z^{-1/p} A),\]
a change of variable in the integration $r_i z^{-1/p} \mapsto r_i$, $b_i z^{-1/p} \mapsto b_i$ yields
\[\begin{split}  I_R(k_R, k_B, z) =  z^{(k-1)/p} & \int_{\Theta(k_R, k_B)}  \bra{ 1 - z^{d/p} |D(\cur{b_j}_{j=1}^{k_B})|}^{n_R- k_R} \cdot \\
& \quad \cdot \bra{ 1 - z^{d/p} |D(\cur{r_i}_{i=1}^{k_R})|}^{n_B - k_B} \delta_0(r_1) d r d b,\end{split}\]
(notice the exponent $k-1$ instead of $k$ because of the different integration for $r_1$).
We now let $z:=\bra{y/n}^{p/d}$, so that
\[ \begin{split} I_R(k_R, k_B, \bra{y/n}^{p/d}) =  \frac{y^{(k-1)/d}}{n^{k-1}} &  \int_{\Theta(k_R, k_B)}  \bra{ 1 - \frac{y}{n} |D(\cur{b_j}_{j=1}^{n_B})|}^{n_R - k_R} \cdot \\
& \quad \cdot \bra{ 1 - \frac{y}{n} |D(\cur{r_i}_{i=1}^{k_R})|}^{n_B - k_B} \delta_0(r_1) d r d b,\end{split}\]
Since $\Theta(k_R, k_B)$ has finite measure (with respect to $\delta_0(r_1)drdb$), by dominated convergence it follows that
\[\begin{split}
 \lim_{ n \to \infty} &   \int_{\Theta(k_R, k_B)}  \bra{ 1 - \frac{y}{n} |D(\cur{b_j}_{j=1}^{k_B})|}^{n_R - k_R} \bra{ 1 - \frac{y}{n} |D(\cur{r_i}_{i=1}^{k_B})|}^{n_B - k_B} \delta_0(r_1) d r d b\\
& = \int_{\Theta(k_R, k_B)}  \exp\bra{ -y \bra{ \alpha_R |D(\cur{b_j}_{j=1}^{k_B})| + \alpha_B  |D(\cur{r_i}_{i=1}^{k_R})|}} \delta_0(r_1) d r d b \\
& =: \mathcal{I}_R(k_R, k_B, \alpha_R, y).
\end{split}\]
Moreover,
\begin{equation}\label{eq:poisson} \lim_{ n\to \infty} {n_R \choose k_R-1} {n_B \choose k_B} \frac{1}{n^{k-1}} = \frac{ \alpha_R^{k_R-1}}{(k_R-1)!}\frac{ \alpha_B^{k_B}}{k_B!},\end{equation}
so that
\[ \lim_{n \to \infty} P_{k,R}^n( (y/n)^{p/d} ) = \sum_{\substack{ k_R+k_B = k \\ k_R \ge 1}} \frac{ \alpha_R^{k_R-1}}{(k_R-1)!}\frac{ \alpha_B^{k_B}}{k_B!} y^{(k-1)/d} \mathcal{I}_R(k_R, k_B, \alpha_R, y).\]

%
%
%
%
%

To show that convergence is dominated, in view of \eqref{eq:pkr} and the limit \eqref{eq:poisson} it is sufficient to dominate each term 
\[ n^{k-1} I_R(k_R, k_R, (y/n)^{p/d} ),\]
If $A \subseteq \T^d$ is not empty,  then $|D(A, z)| \ge  \omega_d z^{d/p}$ (assuming that $\omega_d z^{d/p} \le 1$). Therefore, if also $k_B \ge 1$, we write
\[ \begin{split}
 I_R(k_R, k_B, z ) & = \int_{\Theta_{\T^d}(k_R, k_B, z)}  ( 1 - |D(\cur{b_j}_{j=1}^{k_B}, z)|)^{n_R - k_R}\bra{ 1 - |D(\cur{r_i}_{i=1}^{k_R}, z)|}^{n_B - k_B} d r d b \\
& \le \int_{\Theta_{\T^d}(k_R, k_B, z)} (1-\omega_d z^{d/p})^{n - k}_+d r d b \\
& \le \exp\bra{ - (n-k) z^{d/p} } | \Theta_{\T^d}(k_R, k_B, z) | \\
& \le k^{k-2} z^{d/p} \exp\bra{ - (n-k) z^{d/p} }.
\end{split}\]
where the bound $| \Theta_{\T^d}(n_R, n_B, z) | \le k^{k-2} z^{d/p}$ follows since every tree (even not necessarily bipartite) with edge weights smaller than $z$ can be iteratively obtained by choosing points $x_{i+1} \in D(\cur{x_j}_{j =1}^i, z)$. 
Substituting $z = (y/n)^{p/d}$ gives the required domination. Notice that, if $k_B = 0$ and $k_R \ge 2$, there is nothing to prove, since $\Theta_{\T^d}(k_R, k_B, z)$ is empty. For the case $k_B=0$, $k_R=1$ we argue similarly obtaining 
\[I_R(1,0, z) \le \exp\bra{ - (n-1) z^{d/p} },\]
which is sufficient to conclude. Arguing similarly for $P_{k,B}^n$ gives
\[ \lim_{n \to \infty} P_{k,B}^n( (y/n)^{p/d} ) = \sum_{\substack{ k_R+k_B = k \\ k_B \ge 1}} \frac{ \alpha_R^{k_R}}{k_R!}\frac{ \alpha_B^{k_B-1}}{(k_B-1)!} y^{(k-1)/d} \mathcal{I}_B(k_R, k_B, \alpha_R, y),\]
with similar definitions. Thus, the validity of assumption \ref{ab-hp-1}) is established.

We notice here that, exchanging the order of integration,
\[ \begin{split} & \int_0^\infty  y^{(k-1)/d} \mathcal{I}_R(k_B, k_R, \alpha_R, y) y^{1/d -1} d y \\
&  = \int_{\Theta(k_R, k_B)} \int_0^\infty  \exp\bra{ -y \bra{ \alpha_R |D(\cur{b_j}_{j=1}^{k_B})| + \alpha_B  D(\cur{r_i}_{i=1}^{k_R})|}} y^{k/d-1} d y \delta_0(r_1) d r d b \\
& = \Gamma(k/d) \int_{\Theta(k_R, k_B)} \bra{ \alpha_R |D(\cur{b_j}_{j=1}^{k_B})| + \alpha_B  | D(\cur{r_i}_{i=1}^{k_R})|}^{-k/d} \delta_0(r_1) d r d b,
\end{split}\]
which, after some manipulations, taking into account also the term with $\mathcal{I}_B$, gives the claimed expression for $\beta_{bMST}(p,d)$.

We next prove that assumption \ref{ab-hp-2}) holds. A lower bound is straightforward, since the maximum weight of the edges is uniformly bounded (by some constant $M= M(d,p) >0$, e.g. $M = d^{p/2}$), it follows that $C_k^n(z)=1$ if $z \ge M$ and $n$ is sufficiently large (recall that we must let first $n \to \infty$ and then $k \to \infty$, so we can assume $n \ge k$).

It follows that
\[
\frac{1}{n^{1-\frac{1}{d}}}\int_{0}^{\infty}\bra{ \EE\sqa{C_{k}^n(z)}-1} dz \geq
-\frac{M}{n^{1-\frac{1}{d}}} \to 0\quad \text{ as $n \to \infty$.}
\]

To obtain an upper bound we use Lemma~\ref{lem:prim} on each $G^n$ -- we can assume $k\ge 2$ and $n \ge k$. Given the sets $C_i$, for $i= 1\ldots, m$ with $m \le n/k$, we choose elements $r_i \in C_i$  such that $b_i:=\n_{G^n}(r_i) \in C_i$ and $\n_{G^n}(b_i) = r_i$. Without loss of generality, we can assume that $r_i \in R^n$, $b_i \in B^n$. We consider the induced subgraph of $G^n_k\subseteq G_k$ obtained by restriction on the nodes $R^n_k := \cur{r_i}_{i=1}^m$, $B^n_k := \cur{b_i}_{i=1}^m$.  With the notation of Lemma~\ref{lem:prim}, we have  
\[ w(i,j) \le \dist_{\T^d}(r_i, b_j)^p,\]
hence
\[\frac{1}{n^{1-\frac{1}{d}}}\int_{0}^{\infty}\bra{ \EE\sqa{C_{k}^n(z)}-1} dz \le \C(G^n_k).\]
We then use Lemma~\ref{lemma:boundbi} (in fact applied on the metric space $\T^d$) to obtain that, for some constant $C = C(p)>0$, 
\[ \C(G^n_k) \le C\bra{ \C(R^n_k) + \sum_{i=1}^m \dist_{\T^d}(r_i, b_i)^p}, \]
where only one summation appears since $\n_{G^n_k} (b_i) = \n_G(b_i) = r_i$. Bounding from above the distance on $\T^d$ with the Euclidean distance, and using Remark \eqref{rem:general-euclidean}, we have the inequality, for some constant $C = C(d,p)>0$,
\[ \C(R^n_k) + \sum_{i=1}^m \dist_{\T^d}(r_i, b_i)^p \le C m^{1-p/d} + \sum_{i=1}^m \min_{j=1,\ldots, n}|r_i - Y_j|^p,\]
where we also used that $b_i = \n_{G^n}(r_i)$. 
We finally apply \eqref{eq:min-moment-p} with $x = r_i$ and the i.i.d. uniform random variables $(Y_j)_{j=1}^n$ to conclude that, again for some further constant $C = C(d,p)>0$, 
\[ \EE\sqa{ \C(R^n_k, B^n_k)} \le C \bra{   m^{1-p/d} + m n^{-p/d}} \le C \bra{ \bra{ \frac{n}{k}}^{1-p/d} + \frac{n}{k} n^{-p/d}}.\]
Dividing by $n^{1-p/d}$ and letting first $n\to \infty$ and then $k \to \infty$ gives the thesis.\qedhere

\end{proof}
\end{theorem}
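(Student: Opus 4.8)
The plan is to apply the bipartite Avram--Bertsimas theorem, Theorem~\ref{theoBert}, with pseudo-dimension $d/p > 1$ in place of $d$, to the sequence of random weighted graphs $G^n$ on the vertex set $V^n = R^n \cup B^n$ with weights $w^n(X_i,Y_j) = \dist_{\T^d}(X_i,Y_j)^p$ between differently coloured points and $+\infty$ between points of the same colour, so that $\C(G^n) = \C^p(R^n,B^n)$ and $G^n$ is a.s.\ connected for $n$ large. The colour classes $R^n$, $B^n$ are the equivalence classes in law: within each colour the points are jointly i.i.d., so transpositions are admissible bijections, while no admissible bijection mixes the colours when $n_R\ne n_B$ since it must preserve the bipartition. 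Everything thus reduces to verifying assumptions \ref{ab-hp-1}) and \ref{ab-hp-2}), after which Theorem~\ref{theoBert} yields a limit that a Fubini computation rewrites as \eqref{eq:bip-ab}; I expect the domination in \ref{ab-hp-1}) to be the one delicate point.

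For \ref{ab-hp-1}) I would compute $P^n_{k,R}(z)$, the probability that a fixed red vertex $X_1$ lies in a component of $G^n(z)$ with exactly $k$ vertices, by disintegrating over the numbers $k_R \ge 1$ of red and $k_B \ge 0$ of blue vertices of that component, $k_R+k_B=k$. Conditioning on their positions, the event asks that (i) they lie in $\Theta_{\T^d}(k_R,k_B,z)$, i.e.\ induce a bipartite graph connected using only edges of weight $< z$; (ii) no further red point lies in $D_{\T^d}(\cur{b_j}_{j=1}^{k_B},z)$; (iii) no further blue point lies in $D_{\T^d}(\cur{r_i}_{i=1}^{k_R},z)$. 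Independence of (ii) and (iii) produces the explicit integral \eqref{eq:I-torus}, and summing over the $\binom{n_R}{k_R-1}\binom{n_B}{k_B}$ labellings (with $X_1$ fixed) gives \eqref{eq:pkr}. Setting $z = (y/n)^{p/d}$: once $z$ is small enough (say $z \le 1/(4k)$), the relevant configurations and the sets $D_{\T^d}(\cdot,z)$ fit inside a ball of radius $1/2$ and lift isometrically to $\R^d$, and rescaling the free coordinates by $z^{-1/p}$ brings the domain to $\Theta(k_R,k_B) = \Theta_{\R^d}(k_R,k_B,1)$ of \eqref{eq:def-theta}, while the factors $(1-\tfrac yn|D(\cdot)|)^{n_R-k_R}$ and $(1-\tfrac yn|D(\cdot)|)^{n_B-k_B}$ converge to $\exp(-y\alpha_R|D(\cur{b_j}_{j=1}^{k_B})|)$ and $\exp(-y\alpha_B|D(\cur{r_i}_{i=1}^{k_R})|)$, and $\binom{n_R}{k_R-1}\binom{n_B}{k_B}/n^{k-1}$ converges to $\alpha_R^{k_R-1}/(k_R-1)!\cdot\alpha_B^{k_B}/k_B!$ as in \eqref{eq:poisson}. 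By dominated convergence this identifies $f_{k,R}$ (and symmetrically $f_{k,B}$).

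The delicate point is the domination: one must bound $n^{k-1}I_R(k_R,k_B,(y/n)^{p/d})$ uniformly (for $n$ large) by a function of $y$ integrable against $y^{p/d-1}$. For this I would use that $|D_{\T^d}(A,z)| \ge \omega_d z^{d/p}$ for nonempty $A$, so that when $k_R,k_B \ge 1$ the two product factors together are at most $\exp(-(n-k)z^{d/p})$, while $|\Theta_{\T^d}(k_R,k_B,z)| \le k^{k-2}(\omega_d z^{d/p})^{k-1}$: pick one of the $\le k^{k-2}$ spanning-tree structures (Cayley's formula) and root it at $r_1$, so that each non-root point sits in a ball of volume $\omega_d z^{d/p}$ about its parent. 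Substituting $z = (y/n)^{p/d}$ gives $n^{k-1}I_R(k_R,k_B,(y/n)^{p/d}) \le k^{k-2}\omega_d^{k-1}\,y^{k-1}\exp(-y(1-k/n))$, which for $n \ge 2k$ is dominated by the integrable $k^{k-2}\omega_d^{k-1}\,y^{k-1}e^{-y/2}$; summing the $\le k$ pairs $(k_R,k_B)$ bounds $P^n_{k,R}$, and the degenerate cases $k_B = 0$ are handled by hand ($\Theta_{\T^d}$ is empty if $k_R \ge 2$, and $I_R(1,0,z) \le e^{-(n-1)z^{d/p}}$). The symmetric argument settles $P^n_{k,B}$, so \ref{ab-hp-1}) holds.

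For \ref{ab-hp-2}) I would fix $k \ge 2$ and send $n \to \infty$ first, then $k \to \infty$. The lower bound is immediate: all finite weights are $\le M := d^{p/2}$, so $C^n_k(z) = 1$ for $z \ge M$ and $\tfrac{1}{n^{1-p/d}}\int_0^\infty(\EE[C^n_k(z)]-1)\,dz \ge -M\,n^{-(1-p/d)} \to 0$. For the upper bound, Lemma~\ref{lem:prim} supplies a partition $V^n = \bigcup_{i=1}^m C_i$ with $m \le n/k$, each $C_i$ containing a mutual nearest-neighbour pair $r_i,b_i$, and $\int_0^\infty(C^n_k(z)-1)\,dz \le \C(G^n_k)$; restricting $G^n_k$ to $\cur{r_i}$, $\cur{b_i}$ and bounding the merge-weights by $\dist_{\T^d}(r_i,b_j)^p$, Lemma~\ref{lemma:boundbi} gives $\C(G^n_k) \le C(\C^p(\cur{r_i}) + \sum_i\dist_{\T^d}(r_i,b_i)^p)$ --- a single sum, since $b_i = \n(r_i)$ --- and then Remark~\ref{rem:general-euclidean} together with $b_i = \n_{G^n}(r_i)$ bounds this by $C\,m^{1-p/d} + \sum_i\min_{j\le n}|r_i - Y_j|^p$. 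Taking expectations and using \eqref{eq:min-moment-p} gives $\EE[\C(G^n_k)] \le C((n/k)^{1-p/d} + (n/k)\,n^{-p/d})$, so dividing by $n^{1-p/d}$ and letting $n \to \infty$ leaves $C\,k^{-(1-p/d)} \to 0$. Hence \ref{ab-hp-2}) holds, Theorem~\ref{theoBert} applies, and a final Fubini step --- integrating $y$ first against $\exp(-y(\alpha_R|D(\cur{b_j}_{j=1}^{k_B})| + \alpha_B|D(\cur{r_i}_{i=1}^{k_R})|))$ to produce the Gamma factor, then collecting the red and blue contributions with their Poisson weights $\alpha_R^{k_R}/k_R!\cdot\alpha_B^{k_B}/k_B!$ and reindexing by $(k_R,k_B)$ --- rearranges the series into exactly \eqref{eq:bip-ab}, namely $\beta_{bMST}(d,p)$.
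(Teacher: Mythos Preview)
Your proposal is correct and follows essentially the same route as the paper's proof: apply Theorem~\ref{theoBert} with pseudo-dimension $d/p$, verify assumption \ref{ab-hp-1}) via the disintegration \eqref{eq:pkr}, the torus-to-$\R^d$ lift, rescaling, and the domination based on $|D_{\T^d}(A,z)|\ge \omega_d z^{d/p}$ together with the Cayley-type bound on $|\Theta_{\T^d}(k_R,k_B,z)|$, then verify assumption \ref{ab-hp-2}) via Lemma~\ref{lem:prim}, Lemma~\ref{lemma:boundbi}, Remark~\ref{rem:general-euclidean} and \eqref{eq:min-moment-p}; the final Fubini/Gamma computation is also the same. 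If anything, your bound $|\Theta_{\T^d}(k_R,k_B,z)|\le k^{k-2}(\omega_d z^{d/p})^{k-1}$ is stated more carefully than the paper's displayed version.
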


To transfer the result from the torus $\T^d$ to the cube $[0,1]^d$, we use the fact that points in $[0,1]^d$ can be projected to $\T^d$, and $\dist_{\T^d}(x,y) \le |x-y|$, so that, for any $p>0$, $R$, $B \subseteq [0,1]^d$,
\[ \C^p(R,B |\T^d) \le \C^p(R,B),\]
for any set of points $R$, $B \subseteq [0,1]^d$, where $\C^p(R,B |\T^d)$ denotes the MST cost functional on $\T^d$. Letting $p \to \infty$ yields also $ \C^\infty(R,B |\T^d) \le \C^\infty(R,B)$. A converse inequality is the following one.

\begin{lemma}\label{lem:upper-cube-torus-fat-boundary}
If $\delta \in (0,1/2)$ is such that $\C^\infty(R,B) \le \delta$, then, for every $p>0$,
\[ \C^p(R,B) \le \C^p(R,B |\T^d) +\C^p(R_\delta, B_\delta),\]
where $R_\delta = R\setminus [ \delta, 1-  \delta]^d$, $B_\delta =  B\setminus [ \delta, 1-  \delta]^d$.
\begin{proof}
Indeed, let $T$ be a MST for $R$, $B$ projected on $\T^d$. The assumption gives that for every $\cur{r,b} \in T$, since $|r-b|\le \delta$, it must be $\dist_{\T^d}(r,b) = |r-b|$ if $r \in R\setminus R_\delta$ or  $b \in B \setminus B_\delta$. Therefore, the only obstacle to bound $\C^p(R,B)$ from above by $\C^p(R,B|\T^d)$ is due to edges $\cur{r,b} \in T$ with $r\in R_\delta$ and $b \in B_\delta$, for which $\dist_{\T^d}(r,b)$ may be much smaller than $|r-b|$. However, removing all these edges and adding all the edges of a bipartite Euclidean MST over $R_\delta$, $B_\delta$ yields a connected graph and the desired upper bound. 
\end{proof}
\end{lemma}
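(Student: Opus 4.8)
The plan is to build, out of a torus-optimal bipartite spanning tree, a connected spanning subgraph of the \emph{Euclidean} complete bipartite graph on $(R,B)$ whose total $p$-cost is at most the claimed right-hand side, and then use that $\C^p(R,B)$ is an infimum over such subgraphs. By Remark~\ref{rem:p-dependence-mst}, on the torus there is a bipartite spanning tree $T$ of $R\cup B$ that is a minimum spanning tree for the weight $\dist_{\T^d}(\cdot,\cdot)^p$ and is simultaneously a minimum bottleneck spanning tree for $\dist_{\T^d}$. Since $\dist_{\T^d}\le|\cdot|$ gives $\C^\infty(R,B\,|\,\T^d)\le\C^\infty(R,B)\le\delta$, every edge $\{r,b\}\in T$ satisfies $\dist_{\T^d}(r,b)\le\delta<1/2$. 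The geometric heart of the proof is the following: if $\{r,b\}\in T$ is \emph{wrapping}, i.e.\ $\dist_{\T^d}(r,b)<|r-b|$, then choosing a coordinate $i$ realizing the wrapping one has $|r_i-b_i|>1/2$, while the torus distance in that coordinate, $1-|r_i-b_i|$, is $\le\dist_{\T^d}(r,b)\le\delta$; hence $|r_i-b_i|\ge 1-\delta$, which forces $\min(r_i,b_i)\le\delta$ and $\max(r_i,b_i)\ge 1-\delta$. Thus both $r$ and $b$ lie outside $(\delta,1-\delta)^d$, i.e.\ $r\in R_\delta$ and $b\in B_\delta$ (the exceptional configurations with a coordinate exactly equal to $\delta$ or $1-\delta$ form a null set and are irrelevant in the random application). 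Equivalently: if $r\in R\setminus R_\delta$ or $b\in B\setminus B_\delta$, then $\dist_{\T^d}(r,b)=|r-b|$.

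Now the surgery. Let $T_{\mathrm w}\subseteq T$ be the set of wrapping edges. If $T_{\mathrm w}=\emptyset$, then $T$ is already a Euclidean bipartite spanning tree of cost $\C^p(R,B\,|\,\T^d)$ and we are done. Otherwise $R_\delta,B_\delta\neq\emptyset$ by the observation above; let $T'$ be a Euclidean bipartite minimum spanning tree of $(R_\delta,B_\delta)$, so $T'$ connects all of $R_\delta\cup B_\delta$ and has $p$-cost $\C^p(R_\delta,B_\delta)$. Put $S:=(T\setminus T_{\mathrm w})\cup T'$, a subgraph of the Euclidean complete bipartite graph on $(R,B)$. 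Deleting the $q:=|T_{\mathrm w}|$ wrapping edges splits $T$ into components $K_1,\dots,K_{q+1}$; contracting each $K_\ell$ turns $T_{\mathrm w}$ into a spanning tree on $\{K_1,\dots,K_{q+1}\}$, so every $K_\ell$ is incident to some wrapping edge, one of whose endpoints therefore lies in $K_\ell\cap(R_\delta\cup B_\delta)$. Since $T'$ joins any two vertices of $R_\delta\cup B_\delta$, each wrapping edge $\{r,b\}$ (with $r\in K_i$, $b\in K_j$) is replaced in $S$ by a $T'$-path linking $K_i$ to $K_j$; as the contracted graph is connected, $S$ is connected. Finally $|e|=\dist_{\T^d}(e)$ on every $e\in T\setminus T_{\mathrm w}$, whence
\begin{align*}
\C^p(R,B) &\le \sum_{e\in S}|e|^p \le \sum_{e\in T\setminus T_{\mathrm w}}|e|^p+\sum_{e\in T'}|e|^p \\
&= \sum_{e\in T\setminus T_{\mathrm w}}\dist_{\T^d}(e)^p+\C^p(R_\delta,B_\delta) \le \C^p(R,B\,|\,\T^d)+\C^p(R_\delta,B_\delta),
\end{align*}
using $\sum_{e\in T}\dist_{\T^d}(e)^p=\C^p(R,B\,|\,\T^d)$.

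The step I expect to be the crux is the connectivity of $S$: it couples the tree structure of $T$ (deleting $q$ edges yields exactly $q+1$ components glued along a tree, so each is incident to a wrapping edge) with the key geometric fact that \emph{both} endpoints of any wrapping edge sit in the collar $R_\delta\cup B_\delta$ — precisely what makes re-attaching a single spanning tree of $R_\delta\cup B_\delta$ enough to restore connectedness. The coordinate-wise computation behind that fact, and the final cost bookkeeping, are routine.
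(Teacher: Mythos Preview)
Your argument is correct and follows exactly the same surgery idea as the paper: take a torus MST, identify the ``wrapping'' edges, observe that both endpoints of any such edge must lie in the boundary collar $R_\delta\cup B_\delta$, and replace those edges by a Euclidean bipartite MST on $(R_\delta,B_\delta)$. You have simply made explicit the two steps the paper leaves implicit --- the coordinate computation showing that wrapping forces both endpoints into the collar, and the tree-component argument giving connectedness of $S$ --- so there is no substantive difference in approach.
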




We combine the above lemma with the following asymptotic upper bounds.

\begin{lemma}\label{lem:boundexp-bip-any-p}
For $n \ge 1$, let $R^n = \cur{X_i}_{i=1}^{n_R}$, $B^n= \cur{Y_i}_{i=1}^{n_B}$ be (jointly) i.i.d.~uniformly distributed on $[0,1]^d$ with $n_R+n_B = n$ and
\[ \lim_{n \to \infty} \frac{n_R}{n} = \alpha_R \in (0,1), \quad \lim_{n \to \infty} \frac{n_B}{n} = \alpha_B = 1 -\alpha_R.\]
Then, for every $p>0$, there exists a constant $C = C(d,p, \alpha_R)>0$ such that, for $n$ large enough,
\[ \EE\sqa{\C^p\bra{ R^n } }  \le C n^{1-p/d}, \quad \text{and} \quad \EE\sqa{\C^p\bra{ R^n, B^n } }  \le C n^{1-p/d},\]
as well as, for some constant $C=C(d,q, \alpha_R) >0$,
\[ \EE\sqa{  \bra{ \C^\infty\bra{R^n}}^q }\le C\bra{ \frac{\log n}{n}}^{q/d}, \quad \EE\sqa{  \bra{ \C^\infty\bra{R^n, B^n}}^q }\le C\bra{ \frac{\log n}{n}}^{q/d},\]
and finally, for every $a>0$, there exists $C= C(d,a,\alpha_R)>0$, such that for $n$ large enough,
\begin{equation}\label{eq:cinfity-markov} P( \C^\infty\bra{R^n} > C ( \log(n)/n)^{1/d} ) \le \frac{C}{n^a}, \quad  P( \C^\infty\bra{R^n, B^n} > C ( \log(n)/n)^{1/d} ) \le \frac{C}{n^a}.\end{equation}
\begin{proof}
This follows from an application of the space-filling curve technique: consider $\gamma: [0,1] \to [0,1]^d$ such that the push-forward of the uniform measure on $[0,1]^d$ is the uniform measure on $[0,1]^d$ and it is H\"older continuous with exponent $1/d$, i.e., 
\[  C_\gamma := \sup_{s\neq t} \frac{ |\gamma(t)-\gamma(s)|^{1/d}}{|t-s|}  < \infty.\]
While many constructions for $d=2$ are historically well-known, the case of general $d$ is established in detail e.g.\ in  \cite{milne1980peano}. Let then $\bra{Z_i}_{i=1}^{m}$ be i.i.d.\ uniform on $[0,1]$, so that $\bra{\gamma(Z_i)}_{i=1}^{m}$ are i.i.d.\ on $[0,1]^d$. Consider the order statistics
\[ Z_{(1)} = \min_{i=1, \ldots, m} Z_i \le Z_{(2)} \le \ldots \le Z_{(m)} = \max_{i=1, \ldots, m} Z_i\]
and let $T$ be the connected graph on $\cur{\gamma(Z_i)}_{i=1}^{m}$ with edges
\[ \cur{ \cur{\gamma(Z_{(i)}), \gamma(Z_{(i+1)})} \, : \, i=1, \ldots, m-1}.\]
We have
\[ \C^p\bra{\bra{\gamma(Z_i)}_{i=1}^{m}} \le   \sum_{i=1}^{m-1} |\gamma(Z_{(i+1)})- \gamma(Z_{(i)})|^p \le C_\gamma \sum_{i=1}^{m-1} |Z_{(i+1)}- Z_{(i)}|^p.\]
The law of each $Z_{(i+1)} - Z_{(i)}$ is beta $B(1,n)$, so that, for every $q \in \mathbb{N}$,
\[  \EE\sqa{ | Z_{(i+1)} - Z_{(i)} |^q } = \prod_{r=0}^{q-1}\frac{r+1}{m+r+1} \le C(q) m^{-q}.\]
Bounding the $p$-th moment with the $\left\lceil p \right \rceil$-th moment gives  that, for $p>0$, there exists a constant $C(p)$ such that, for every $m$ and $i$,
\[ \EE\sqa{ | Z_{(i+1)} - Z_{(i)} |^p }  \le C(p) m^{-p}\]
The first inequality of the thesis thus follows by summation upon $i= 1\ldots, m-1$ and letting $m=n_R$. For the second inequality, we use Lemma~\ref{lemma:boundbi} and \eqref{eq:min-moment-p}. The remaining inequalities follow analogously, noticing that 
\[ \max_{i=1, \ldots, m-1} | Z_{(i+1)} - Z_{(i)} | = \max_{i=1, \ldots, m} |Z_i - \n(Z_i)| = M^m,\]
with the notation of Proposition~\ref{prop:max-min}.
\end{proof}
\end{lemma}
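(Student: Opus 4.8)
The plan is to produce, on the same probability space, an explicit (random) spanning tree whose total $p$-cost is an upper bound for $\C^p(R^n)$ and whose edge lengths are easy to analyse, by means of a space-filling curve. Fix once and for all a map $\gamma\colon[0,1]\to[0,1]^d$ pushing the uniform measure on $[0,1]$ forward to the uniform measure on $[0,1]^d$ and H\"older continuous of exponent $1/d$, say $\abs{\gamma(t)-\gamma(s)}\le C_\gamma\abs{t-s}^{1/d}$; such a curve is classical for $d=2$ and is built for general $d$ in \cite{milne1980peano}. Then $R^n$ can be realised in law as $\cur{\gamma(Z_i)}_{i=1}^{n_R}$ with $(Z_i)_{i=1}^{n_R}$ i.i.d.\ uniform on $[0,1]$, and joining the points in the order of the order statistics $Z_{(1)}\le\dots\le Z_{(n_R)}$ yields a spanning tree $T$ with
\[ \C^p(R^n)\le\sum_{i=1}^{n_R-1}\abs{\gamma(Z_{(i+1)})-\gamma(Z_{(i)})}^p\le C_\gamma^p\sum_{i=1}^{n_R-1}\abs{Z_{(i+1)}-Z_{(i)}}^{p/d}. \]

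I would then estimate the right-hand side in expectation. Each spacing $Z_{(i+1)}-Z_{(i)}$ has a Beta$(1,n_R)$ law, whose integer moments satisfy $\prod_{r=0}^{q-1}(r+1)/(n_R+r+1)\le C(q)\,n_R^{-q}$; by Jensen's inequality the (possibly fractional) $p/d$-th moment is controlled by a power of the $\lceil p/d\rceil$-th, so $\EE\sqa{\abs{Z_{(i+1)}-Z_{(i)}}^{p/d}}\le C(p,d)\,n_R^{-p/d}$, and summing over the $n_R-1$ edges gives $\EE\sqa{\C^p(R^n)}\le C\,n_R^{1-p/d}\le C\,n^{1-p/d}$. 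For the bipartite functional I would invoke Lemma~\ref{lemma:boundbi}, bounding $\C^p(R^n,B^n)$ by a constant times $\C^p(R^n)+\sum_{r\in R^n}\dist(B^n,r)^p+\sum_{b\in B^n}\dist(R^n,b)^p$: the first term has just been controlled, while each of the $O(n)$ nearest-point distances is the distance from a fixed point to an independent i.i.d.\ uniform sample of size of order $n$, hence has $p$-th moment at most $C\,n^{-p/d}$ by \eqref{eq:min-moment-p}, so the whole expression is again $O(n^{1-p/d})$.

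For the bottleneck functional $\C^\infty$ I would reuse the tree $T$, whose bottleneck cost is $\max_i\abs{\gamma(Z_{(i+1)})-\gamma(Z_{(i)})}\le C_\gamma\bra{\max_i\abs{Z_{(i+1)}-Z_{(i)}}}^{1/d}$, so everything reduces to the size of the largest spacing of $n_R$ uniform points on $[0,1]$. That spacing is of order $\log(n_R)/n_R$: covering $[0,1]$ by $O(1/t)$ intervals of length $t/2$ and requiring each to contain a point gives $P(\text{largest spacing}>t)\le\lceil 2/t\rceil\,(1-t/2)^{n_R}$, hence $P(\text{largest spacing}>C(\log n_R)/n_R)\le C\,n_R^{-a}$ for $C=C(a)$ large enough — which is exactly the estimate of Proposition~\ref{prop:max-min} in the one-dimensional case. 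This gives \eqref{eq:cinfity-markov} for $\C^\infty(R^n)$, and, combined with the deterministic bound $\C^\infty(R^n)\le\sqrt d$ and the layer-cake formula, also $\EE\sqa{\bra{\C^\infty(R^n)}^q}\le C(\log n/n)^{q/d}$. The bipartite bottleneck estimates then follow from the second inequality of Lemma~\ref{lemma:boundbi}, $\C^\infty(R^n,B^n)\le C\bra{\C^\infty(R^n)+\dist(R^n,B^n)}$, combining the bound just obtained for $\C^\infty(R^n)$ with Proposition~\ref{prop:max-min-bipartite} for the Hausdorff distance $\dist(R^n,B^n)$ (and once more the layer-cake formula with $\C^\infty\le\sqrt d$ for the $q$-th moment).

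No single step is a serious obstacle here; the argument is a routine combination of the space-filling-curve device with elementary order-statistics computations. The points needing a little care are: (i) transporting the i.i.d.\ uniform sample on $[0,1]^d$ to an i.i.d.\ uniform sample on $[0,1]$ through the measure-preserving property of $\gamma$, which is what makes the Beta law of the spacings available; (ii) in the bipartite estimates, checking that when Lemma~\ref{lemma:boundbi} is applied the remainder terms really are nearest-point distances to an \emph{independent} sample, so that \eqref{eq:min-moment-p} and Propositions~\ref{prop:max-min}--\ref{prop:max-min-bipartite} apply with sample sizes comparable to $n$; and (iii) choosing the constant $C$ in \eqref{eq:cinfity-markov} large enough, depending on $a$, to absorb the $\lceil 2/t\rceil$ prefactor in the union bound.
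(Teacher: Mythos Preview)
Your proposal is correct and follows essentially the same route as the paper: space-filling curve plus order-statistics spacings for $\C^p(R^n)$, Lemma~\ref{lemma:boundbi} together with \eqref{eq:min-moment-p} for the bipartite cost, and the largest spacing (via Proposition~\ref{prop:max-min} in $d=1$) combined with Lemma~\ref{lemma:boundbi} and Proposition~\ref{prop:max-min-bipartite} for the bottleneck bounds. Your write-up in fact handles two points more carefully than the printed proof --- you have the correct exponent $p/d$ on the spacings (the paper's displayed bound $\sum |Z_{(i+1)}-Z_{(i)}|^p$ is a typo), and you bound the maximal spacing directly rather than identifying it with $M^m$, an equality that need not hold pointwise.
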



The following result entails that the expectation of the bipartite Euclidean minimum spanning tree cost on $\T^d$ and on the cube $[0,1]^d$ are much closer than the rate $n^{1-p/d}$.

\begin{proposition}\label{torus-equals-cube}
For $n \ge 1$, let $R^n = \cur{X_i}_{i=1}^{n_R}$, $B^n= \cur{Y_i}_{i=1}^{n_B}$ be (jointly) i.i.d.~uniformly distributed on $[0,1]^d$ with $n_R+n_B = n$ and
\[ \lim_{n \to \infty} \frac{n_R}{n} = \alpha_R \in (0,1), \quad \lim_{n \to \infty} \frac{n_B}{n} = \alpha_B = 1 -\alpha_R.\]
Then, for every $p>0$, there exists a constant $C = C(p,d)>0$ such that, for $n$ large enough, 
\[ \EE\sqa{ \C^p(R^n, B^n)} -\EE[ \C^p(R^n, B^n | \T^d )] \le C n^{1-(p+1)/d} (\log n)^{(p+1)/d}. \]
In particular, for every $p <d$,
\[ \lim_{n \to\infty} \frac{ \EE\sqa{ \C^p(R^n, B^n)}}{n^{1-p/d}} = \beta_{bMST}(p,d)\]
with $\beta_{bMST}(p,d)$ as in Theorem~\ref{thm:main-bipartite-euclidean}. 
\begin{proof}
Let $C>0$ be as in \eqref{eq:cinfity-markov} with $a>p/d$ and let $\delta = C(\log(n)/n)^{1/d}$. Since both $n_R$ and $n_B$ grow linearly with $n \to \infty$, an application of Lemma~\ref{lem:cubes-filled} as in the proof of Proposition~\ref{prop:max-min} ensures that with probability larger than $1-C/n^a$, for every fixed $a>0$, in particular for $a>p/d$ (and a suitable constant $C>0$) all cubes $Q$ with $|Q| = \delta^d$ have non-empty intersections both with $R^n$ and $B^n$.  
Let $E$ the event where both these conditions occur as well as $\C^\infty(R^n, B^n) \le \delta$. 
In the event $E$, we may apply Lemma~\ref{lem:upper-cube-torus-fat-boundary}, otherwise  we simply bound $\C^p(R^n, B^n) \le n d^{p/2}$ and use the fact that $P(E^c)\le C/n^a$ for $a$ arbitrary large.

  We are then reduced to bound from above
\[  \EE\sqa{ \C^p\bra{R^n_\delta, B^n_\delta } I_E }\] 
 where $R^n_\delta  = R^n \setminus [\delta, 1-\delta]^d$, $B^n_\delta  = B^n \setminus [\delta, 1-\delta]^d$. 
 We decompose  $[0,1]^d \setminus [\delta, 1-\delta]^d$ into $\bra{Q_i}_{i=1}^m$, cubes with side length $\delta$,  with $m \le C \delta^{-(d-1)}$, for some $C = C(d)>0$. We consider a bipartite Euclidean minimum spanning tree $T^n_i$ on $R^n_i := R^n \cap Q_i$, $B^n_i :=  B^n \cap  Q_i$ -- both are non empty if the event $E$ occurs -- and then add $m-1$ edges, each with one node in a cube and an adjacent one, to connect all these trees (again this is possible since $E$ occurs). This construction leads to a bipartite spanning tree on $R^n_\delta$, $B^n_\delta$, so that, in $E$,
 \[ \C^p\bra{R^n_\delta, B^n_\delta } \le \sum_{i=1}^m \C^p\bra{ R^n_i, B^n_i }  + m \delta^{p}.\]
 Taking expectation and rescaling from the cube $Q_i$ to $[0,1]^d$, we bound each term in the sum using Lemma~\ref{lem:boundexp-bip-any-p},  writing $N_R(Q_i) = |R^n_i|$, $N_B(Q_i) = |B^n_i|$, that are independent random variables with binomial laws with common parameters $(n, \delta^d)$. It follows that 
 \[ \begin{split} \EE\sqa{ \C^p\bra{ R^n_i, B^n_i } I_E} & \le \delta^p \EE\sqa{ \C^p\bra{ R^n_i, B^n_i } I_{\cur{ N_R(Q_i)>0, N_B(Q_i)>0}}}\\
 & \le C \delta^p \bra{ \EE\sqa{ N_R(Q_i)^{1-p/d}}  + \EE\sqa{ N_B(Q_i)} \EE\sqa{ N_R(Q_i)^{-p/d} I_{\cur{ N_R(Q_i)>0}}}}\\
 & \le C\delta^p \log n,\end{split}\]
 having used that
\[ \EE\sqa{ N_R(Q_i)^{1-p/d}} \le \EE\sqa{ N_R(Q_i)}^{1-p/d} \le C (\log n)^{1-p/d}\]
and
\[ \EE\sqa{ N_R(Q_i)^{-p/d} I_{\cur{ N_R(Q_i)>0}}} \le 1.\]
It follows that, for some (other) constant $C = C(p,d)>0$,
\[\EE\sqa{ \C^p\bra{ R^n_i, B^n_i } I_E} \le C m \delta^p \log n = C \delta^{p-d+1}\log n,\]
that eventually gives the thesis.
\end{proof}
\end{proposition}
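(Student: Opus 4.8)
The plan is to transfer the torus asymptotics of Theorem~\ref{formula-torus} to the cube $[0,1]^d$ by controlling the gap between $\C^p(R^n,B^n)$ and $\C^p(R^n,B^n\mid\T^d)$ through Lemma~\ref{lem:upper-cube-torus-fat-boundary}, which bounds that gap by the bipartite Euclidean MST cost of the points falling in a boundary shell of width $\delta$, under the sole proviso that $\C^\infty(R^n,B^n)\le\delta$. The right scale is $\delta:=C(\log n/n)^{1/d}$, with $C$ taken from \eqref{eq:cinfity-markov} (and enlarged if necessary) and $a$ taken as large as we wish, so that two things hold off events of probability $\le C/n^a$: by \eqref{eq:cinfity-markov}, $\C^\infty(R^n,B^n)\le\delta$; and, since $n_R$ and $n_B$ grow linearly, by Lemma~\ref{lem:cubes-filled} applied exactly as in the proof of Proposition~\ref{prop:max-min}, every cube $Q\subseteq[0,1]^d$ with $|Q|=\delta^d$ meets both $R^n$ and $B^n$. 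I write $E$ for the intersection of these two good events, so that $P(E^c)\le C/n^a$.

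On $E^c$ I would use the deterministic bound $\C^p(R^n,B^n)\le n\,d^{p/2}$ (the $n$ nodes are spanned and each edge weight is at most $d^{p/2}$); hence the contribution of $E^c$ to $\EE[\C^p(R^n,B^n)]-\EE[\C^p(R^n,B^n\mid\T^d)]$ is at most $n\,d^{p/2}P(E^c)\le C n^{1-a}$, which is negligible for $a$ large. On $E$ we have $\delta\in(0,1/2)$ and $\C^\infty(R^n,B^n)\le\delta$, so Lemma~\ref{lem:upper-cube-torus-fat-boundary} gives $\C^p(R^n,B^n)-\C^p(R^n,B^n\mid\T^d)\le\C^p(R^n_\delta,B^n_\delta)$, where $R^n_\delta=R^n\setminus[\delta,1-\delta]^d$ and $B^n_\delta=B^n\setminus[\delta,1-\delta]^d$. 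Everything thus reduces to proving $\EE[\C^p(R^n_\delta,B^n_\delta)\,I_E]\le C\,\delta^{\,p-d+1}\log n$, which, after inserting $\delta=C(\log n/n)^{1/d}$, is exactly $C\,n^{1-(p+1)/d}(\log n)^{(p+1)/d}$.

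For this last estimate I would cover the shell $[0,1]^d\setminus[\delta,1-\delta]^d$ by $m\le C\delta^{-(d-1)}$ cubes $Q_i$ of side $\delta$ and, on $E$, construct a competitor bipartite spanning tree of $R^n_\delta\cup B^n_\delta$ by taking a bipartite Euclidean MST inside each $Q_i$ (possible since on $E$ each $Q_i$ contains points of both colours) and joining these $m$ trees by $m-1$ further bipartite edges of length $O(\delta)$, each linking two adjacent cubes; this gives $\C^p(R^n_\delta,B^n_\delta)\le\sum_{i=1}^m\C^p(R^n_i,B^n_i)+Cm\delta^p$ on $E$, with $R^n_i=R^n\cap Q_i$ and $B^n_i=B^n\cap Q_i$. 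Taking expectations and using $E\subseteq\{N_R(Q_i)>0,\,N_B(Q_i)>0\}$, where $N_R(Q_i)=|R^n_i|$ and $N_B(Q_i)=|B^n_i|$ are independent binomials with parameters $(n,\delta^d)$ and mean $n\delta^d\asymp\log n$, I would bound each term by $\EE[\C^p(R^n_i,B^n_i)I_{\{N_R(Q_i)>0,N_B(Q_i)>0\}}]$, condition on the pair of counts, rescale $Q_i$ to the unit cube, and apply Lemma~\ref{lemma:boundbi} together with the trivial bound $\C^p(R^n_i)\le d^{p/2}\delta^p N_R(Q_i)$ and the near-neighbour moment bound \eqref{eq:min-moment-p}: on $\{N_R,N_B\ge1\}$ these yield the conditional estimate $\EE[\C^p(R^n_i,B^n_i)\mid N_R,N_B]\le C\delta^p\bra{N_R+N_R N_B^{-p/d}+N_B N_R^{-p/d}}$, and since $N_R,N_B$ are independent with mean $\asymp\log n$ and $N^{-p/d}I_{\{N\ge1\}}\le1$, one gets $\EE[\C^p(R^n_i,B^n_i)I_E]\le C\delta^p\log n$. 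Summing over the $m\le C\delta^{-(d-1)}$ cubes gives $\EE[\C^p(R^n_\delta,B^n_\delta)I_E]\le C\delta^{\,p-d+1}\log n$, and together with the $E^c$ estimate this proves the first assertion. The ``in particular'' statement then follows at once, since Theorem~\ref{formula-torus} gives $\EE[\C^p(R^n,B^n\mid\T^d)]/n^{1-p/d}\to\beta_{bMST}(p,d)$ while $0\le\EE[\C^p(R^n,B^n)]-\EE[\C^p(R^n,B^n\mid\T^d)]\le C n^{1-(p+1)/d}(\log n)^{(p+1)/d}=o(n^{1-p/d})$ precisely when $p<d$.

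The step I expect to be the main obstacle is the estimate $\EE[\C^p(R^n_\delta,B^n_\delta)I_E]\le C\delta^{\,p-d+1}\log n$: one cannot condition on the complicated event $E$ directly, so one must dominate cube by cube by the simple product event $\{N_R(Q_i)>0,N_B(Q_i)>0\}$; and because the occupation numbers $N_R(Q_i),N_B(Q_i)$ are only of order $\log n$ rather than $n$, the genuinely bipartite cross terms cannot simply be absorbed into the non-bipartite MST cost and must be treated separately via Lemma~\ref{lemma:boundbi} and \eqref{eq:min-moment-p} — and it is precisely the $m\asymp\delta^{-(d-1)}$ cubes each carrying an $O(\log n)$ contribution that account for the extra logarithmic factor in the final bound.
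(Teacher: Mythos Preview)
Your proposal is correct and follows essentially the same route as the paper: same choice of $\delta$, same good event $E$, same use of Lemma~\ref{lem:upper-cube-torus-fat-boundary}, same decomposition of the boundary shell into $m\le C\delta^{-(d-1)}$ cubes of side $\delta$, and the same reduction via $E\subseteq\{N_R(Q_i)>0,\,N_B(Q_i)>0\}$. The only difference is cosmetic: in bounding $\C^p(R^n_i,B^n_i)$ inside each small cube the paper invokes Lemma~\ref{lem:boundexp-bip-any-p} (giving a term $\delta^p N_R(Q_i)^{1-p/d}$), whereas you unpack Lemma~\ref{lemma:boundbi} directly and use the cruder trivial bound $\C^p(R^n_i)\le d^{p/2}\delta^p N_R(Q_i)$; since $N_R(Q_i)\asymp\log n$, both yield $\EE[\C^p(R^n_i,B^n_i)I_E]\le C\delta^p\log n$ and the arguments coincide thereafter.
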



We end the proof of Theorem~\ref{thm:main-bipartite-euclidean} with a concentration result to improve from  convergence of expectations to complete convergence.

\begin{proposition}\label{concentration}
Let $d \ge 1$. For $n \ge 1$, let $R^n = \cur{X_i}_{i=1}^{n_R}$, $B^n= \cur{Y_i}_{i=1}^{n_B}$ be (jointly) i.i.d.~uniformly distributed on $[0,1]^d$ with $n_R+n_B = n$ and\[ \lim_{n \to \infty} \frac{n_R}{n} = \alpha_R \in (0,1), \quad \lim_{n \to \infty} \frac{n_B}{n} = \alpha_B = 1 -\alpha_R.\]
Then, for every $p \in (0, d/2)$ if $d \in \cur{1,2}$ or any $p>0$ if $d \ge 3$, complete convergence holds:
\[ \lim_{n \to \infty} \frac{ \C^p(R^n, B^n)- \EE\sqa{ \C^p(R^n, B^n)}}{n^{1-p/d}} = 0.\]

\begin{proof}

Consider the function
\begin{equation*}\label{eq:function-concentration}
\bra{[0,1]^{d}}^{n_R+n_B} \ni  (x,y) = (x_1, \ldots, x_{n_R}, y_1, \ldots, y_{n_B}) \mapsto f(x,y)=  \C^p\bra{\cur{x_i}_{i=1}^{n_R},  \cur{y_j }_{j=1}^{n_B}}.
\end{equation*}
We argue separately for $p< 1$ and $p\ge 1$. In the former case, if $(x,y)$, $(x',y') \in \bra{[0,1]^{d}}^{n_R+n_B}$ differ only on a single coordinate, say for simplicity $x_1 \neq x_1'$, then, letting $T$ denote the Euclidean bipartite minimum spanning tree on $\cur{x_i}_{i=1}^{n_R}$, $\cur{y_j}_{j=1}^{n_B}$,
\[ \begin{split} f(x',y') & \le f(x,y) + \sum_{\cur{y_j, x_1} \in T} | y_j-x_1'|^p - | y_j-x_1|^p\\
& \le f(x,y)  + \sum_{\cur{y_j, x_1} \in T} |x_1-x_1'|^p  \quad \text{(by the triangle inequality, since $p\le 1$)}\\
& \le f(x,y) + \deg_{T}(x_1)  |x_1-x'_1|^p \\
& \le f(x,y) + \Delta(T) d^{p/2},
\end{split}\]
where $\Delta(T)$ denote the maximum degree of $T$. Arguing symmetrically, we obtain
\[ |f(x,y) - f(x',y')| \le  \Delta(T) d^{p/2} + \Delta(T') d^{p/2}.\]
By Lemma~\ref{lem:mcdiarmid-sobolev} with $E_i = [0,1]^d$ and $g_i(x,y) = \Delta(T) d^{p/2}$ (for every $i=1,\ldots, n$)  we obtain, for every $q\ge 2$, that there exists $C = C(q)>0$ such that
\[\begin{split} \EE\sqa{ \abs{ \C^p\bra{R^n, B^n} - \EE\sqa{ \C^p\bra{R^n, B^n}}}^q } & \le C n^{q/2} \EE\sqa{ \bra{ \Delta(T^n) d^{p/2}}^q}\\
& \le  C' n^{q/2} (\log n)^q, 
\end{split}\]
where we used \eqref{eq:moment-delta-tn} and $C'= C(d,q,\alpha_R, \alpha_b)>0$ is a constant. Dividing both sides by $n^{q(1-p/d)}$ and using Markov inequality yields, for every $\epsilon>0$,
\[ P\bra{ \abs{ \C^p\bra{R^n, B^n} - \EE\sqa{ \C^p\bra{R^n, B^n}}}/n^{1-p/d}>\epsilon } \le C' \epsilon^{-q} n^{q(p/d-1/2)} (\log n)^q,\]
that is summable if $p/d-1/2<0$, i.e., $p<d/2$, and $q$ is sufficiently large.

In the case $p \ge 1$, we use the fact that $f$ is Lipschitz (being minimum of Lipschitz functions) with a.e.\ derivative given by 
\[ \nabla_{x_i} f(x,y) =  \sum_{\substack{ j=1,\ldots, n_B \\ \cur{x_i, y_j} \in T}} p |x_i-y_j|^{p-2} (x_i-y_j),\]
\[ \nabla_{y_j} f(x,y) =  \sum_{\substack{ i=1,\ldots, n_R \\ \cur{x_i, y_j} \in T}} p |x_i-y_j|^{p-2} (x_i-y_j),\]
We bound from above, using Cauchy-Schwartz inequality
\[  |\nabla_{x_i} f(x,y)|^2  \le p^2  \Delta(T) \sum_{\substack{ j=1,\ldots, n_B \\ \cur{x_i, y_j} \in T}} |x_i-y_j|^{2(p-1)}.\] It follows that the (Euclidean) norm of the derivative is bounded above by
\[ |\nabla f(x,y)|^2 \le 2 p^2 \Delta(T)  \C^{2(p-1)}(x,y) \le 2 p^2 \Delta(T) \bra{ \C^\infty(x,y)}^{2(p-1)} n,\]
where we used the fact that the minimum spanning tree $T$ does not depend on the choice of $p$, see Remark~\ref{rem:p-dependence-mst}. For every $q>0$ Lemma~\ref{lem:lower-bound-max-deg} and Lemma~\ref{lem:boundexp-bip-any-p} yield that, for some constant $C = C(d,p,q)>0$ and $n$ sufficiently large,
\[\begin{split} \EE\sqa{ \Delta(T^n)^{q/2} \bra{ \C^\infty(R^n,B^n)}^{q(p-1)}} & \le  \EE\sqa{ \Delta(T^n)^{q}}^{1/2} \EE\sqa{\bra{ \C^\infty(R^n,B^n)}^{2q(p-1)}}^{1/2}\\
& \le  C \log(n)^{q/2} \bra{ \frac{ \log n}{n}}^{q(p-1)/d}
\end{split}\]
It follows that (possibly for a larger constant $C$)
\[ \EE\sqa{ |\nabla f( \cur{X_i}_{i=1}^n, \cur{Y_i}_{i=1}) |^q } \le C \log(n)^{q\bra{ 1/2+(p-1)/d}} n^{q(1/2- (p-1)/d)}.\]
Poincaré inequality for the uniform measure on the unit cube, see e.g.\ the argument in \cite[Prop. 2.8]{ledoux2001concentration}, gives that, for some constant $C = C(q)>0$,
\[ \EE\sqa{\abs{ f(  \cur{X_i}_{i=1}^n, \cur{Y_i}_{i=1})  -\EE\sqa{f(  \cur{X_i}_{i=1}^n, \cur{Y_i}_{i=1})}}^q  } \le C(q) \EE\sqa{ |\nabla f( \cur{X_i}_{i=1}^n, \cur{Y_i}_{i=1}) |^q },\]
thus using Markov inequality, for every $\epsilon>0$, we have
\[  P\bra{ \abs{ \C^p\bra{R^n, B^n} -  \EE\sqa{ \C^p\bra{R^n, B^n}}}/n^{1-p/d}>\epsilon }\le C  \frac{ ( \log n)^{q\bra{ 1/2+(p-1)/d}}}{\epsilon^q  n^{q(1/2-1/d)}},\]
which is summable if $q$ is large enough and $d \ge 3$. 
\end{proof}
\end{proposition}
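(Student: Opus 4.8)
The plan is to deduce the stated complete convergence from the summability $\sum_n P\bra{ \abs{\C^p(R^n,B^n) - \EE[\C^p(R^n,B^n)]} > \epsilon\, n^{1-p/d}} < \infty$ for every $\epsilon>0$ (whence almost sure convergence follows by Borel--Cantelli), and this summability will come from estimating sufficiently high moments of $f(x,y) := \C^p\bra{\cur{x_i}_{i=1}^{n_R}, \cur{y_j}_{j=1}^{n_B}}$, regarded as a function of the $n$ independent uniform coordinates, together with Markov's inequality for $q$ large. I expect to treat the regimes $p<1$ and $p\ge1$ separately, since the first is naturally of bounded-differences type while in the second $f$ is Lipschitz with a controllable gradient.

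For $p<1$, if $(x,y)$ and $(x',y')$ differ only in one coordinate, say $x_1$, then only the weights of the bipartite MST edges incident to $x_1$ change; the elementary inequality $\abs{a}^p - \abs{b}^p \le \abs{a-b}^p$ (valid for $p\le1$) together with $\abs{x_1-x_1'}\le\sqrt d$ gives $f(x',y')\le f(x,y)+\Delta(T)\,d^{p/2}$, and symmetrically with the tree $T'$ of $(x',y')$. The one-coordinate increment is thus not a constant but the random quantity $\Delta(T)\,d^{p/2}$, so the classical McDiarmid inequality does not apply directly; instead I would invoke the variant proved in Appendix~\ref{app:mcdiarmid} (Lemma~\ref{lem:mcdiarmid-sobolev}), which allows the bounded-difference terms $g_i$ to be configuration-dependent, obtaining $\EE[\abs{f-\EE f}^q]\le C\,n^{q/2}\,\EE[(\Delta(T^n)\,d^{p/2})^q]$ for every $q\ge2$. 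Combining with the moment bound \eqref{eq:moment-delta-tn} of Lemma~\ref{lem:lower-bound-max-deg}, which gives $\EE[\Delta(T^n)^q]\le C(\log n)^q$, and then Markov, yields a tail bound $\lesssim \epsilon^{-q}\,n^{q(p/d-1/2)}(\log n)^q$; this is summable precisely when $p<d/2$ (taking $q$ large), which in particular covers every admissible $p$ when $d\in\cur{1,2}$.

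For $p\ge1$, being a minimum of smooth functions $f$ is Lipschitz, with a.e.\ gradient $\nabla_{x_i}f=\sum_{j:\,\cur{x_i,y_j}\in T}p\abs{x_i-y_j}^{p-2}(x_i-y_j)$ and symmetrically in the $y_j$. Cauchy--Schwarz gives $\abs{\nabla_{x_i}f}^2\le p^2\,\Delta(T)\sum_{j:\,\cur{x_i,y_j}\in T}\abs{x_i-y_j}^{2(p-1)}$, and summing over all coordinates, using that $T$ has fewer than $n$ edges each of length at most the bottleneck cost $\C^\infty(x,y)$ (independent of $p$ by Remark~\ref{rem:p-dependence-mst}), yields $\abs{\nabla f}^2\le 2p^2\,\Delta(T)\,(\C^\infty(x,y))^{2(p-1)}\,n$. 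Then I would apply the Poincar\'e inequality for the uniform measure on the cube, in the form $\EE[\abs{f-\EE f}^q]\le C(q)\,\EE[\abs{\nabla f}^q]$ (see \cite[Prop.~2.8]{ledoux2001concentration}), split the right-hand side by Cauchy--Schwarz into $\EE[\Delta(T^n)^q]^{1/2}\,\EE[(\C^\infty(R^n,B^n))^{2q(p-1)}]^{1/2}$, and insert Lemma~\ref{lem:lower-bound-max-deg} and Lemma~\ref{lem:boundexp-bip-any-p}; this produces $\EE[\abs{\nabla f}^q]\le C\,(\log n)^{q(1/2+(p-1)/d)}\,n^{q(1/2-(p-1)/d)}$. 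After dividing by $n^{q(1-p/d)}$ the exponent of $n$ in the resulting Markov tail bound is $q(1/d-1/2)$, negative exactly when $d\ge3$, so the bound is summable there for $q$ large.

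The main obstacle, and the reason off-the-shelf concentration does not suffice, is that the bipartite Euclidean MST has maximum degree of order $\log n$ (Theorem~\ref{thm:main-degree}) rather than $O(1)$, so a single coordinate genuinely moves $f$ by $\sim\log n$; this forces the random-increment McDiarmid inequality in the case $p<1$ and, for $p\ge1$, a careful pairing of the degree tail estimate with the bottleneck moment bound. It is precisely the requirement that $\Delta(T^n)(\C^\infty)^{2(p-1)}$ be small enough after the $n^{1-p/d}$ normalization that produces the dimensional restrictions ($p<d/2$ in low dimension, $d\ge3$ for general $p$).
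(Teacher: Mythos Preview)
Your proposal is correct and follows essentially the same route as the paper's own proof: the same split into $p<1$ (random-increment McDiarmid via Lemma~\ref{lem:mcdiarmid-sobolev} with $g_i=\Delta(T)d^{p/2}$ and the degree moment bound \eqref{eq:moment-delta-tn}) and $p\ge1$ (Lipschitz gradient bound $|\nabla f|^2\le 2p^2\Delta(T)(\C^\infty)^{2(p-1)}n$, Poincar\'e on the cube, Cauchy--Schwarz on the moments, then Lemmas~\ref{lem:lower-bound-max-deg} and~\ref{lem:boundexp-bip-any-p}), with the same resulting exponents and dimensional restrictions.
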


\appendix

\section{A concentration inequality in $L^q$.}\label{app:mcdiarmid}

McDiarmid inequality \cite{mcdiarmid1989method} is a simple but effective concentration inequality often used in random combinatorial optimization problems.  An interpretation is that the oscillations of a function of many independent random variables are bounded by its Lipschitz norm (with respect to a Hamming-type distance). The usual proof relies concentration inequalities for discrete time exponential martingales. In this section we show an analogous result where we replace the Lipschitz condition with a ``Sobolev'' one and use Burkholder-Gundy inequalities instead.

\begin{lemma}\label{lem:mcdiarmid-sobolev}
Let $\bra{(E_i, \mathcal{E}_i)}_{i=1}^n$ be measurable spaces, set $E = \prod_{i=1}^n E_i$ and let
\[ f: E \to \R, \quad g_i: E \to [0,\infty] \quad \text{for $i=1,\ldots, n$,}\]
be such that, for every $i \in \cur{1,\ldots, n}$, for every $x$, $x'\in E$ with
\[ x= (x_1,\ldots, x_i, \ldots, x_n) \quad  x'= (x_1, \ldots, x'_i, \ldots, x_n)\]
then
\[  \abs{ f(x) - f(x') } \le g_i(x) +g_i(x').\]
For every $q \ge 2$, there exists $C= C(q)>0$ such that, if $X = (X_i)_{i=1}^n$ are independent random variables, with $X_i: \Omega \to E_i$, then 
\[ \EE\sqa{  | f(X) - \EE\sqa{f(X)}|^q } \le C(q) n^{q/2-1} \sum_{i=1}^n \EE\sqa{ g_i(X)^q}.\]
\begin{proof}
Write $\EE_{i}$ for the conditional expectation with respect to the variables $\bra{X_j}_{j\le i}$. In particular, $\EE_{0} = \EE$ and $\EE_{n}$ is the identity operator. We write $f(X)- \EE\sqa{f(X)}$ as a sum of martingale differences
 \[ f(X) - \EE\sqa{f(X)} = \sum_{i=1}^n \EE_{i} \sqa{f(X)}-\EE_{{i-1}} \sqa{f(X)}.\]
Burkholder-Gundy inequality \cite{burkholder_extrapolation_1970} gives, for some constant $C=C(q) \ge 0$,
\begin{equation}\label{eq:burkholder}  \EE\sqa{  | f(X) - \EE\sqa{f(X)}|^q } \le C(q) \EE\sqa{ \bra{ \sum_{i=1}^n \abs{ \EE_{ i} \sqa{f(X)}-\EE_{ {i-1}} \sqa{f(X)}}^2 }^{q/2}}.\end{equation}
To simplify notation, we introduce a copy of the independent variables $(X_i')_{i=1}^n$ defined on a different space $\Omega'$, and write $\EE'$ for expectation with respect to such variables, so that, because of independence, for every $i=0,\ldots, n$,
\[  \EE_{ i} \sqa{f(X)} = \EE'\sqa{ f(X_1, \ldots, X_i, X'_{i+1}, \ldots X'_n)}.\]%
\[ \begin{split} & \abs{  \EE_{ i} \sqa{f(X)} - \EE_{ i-1} \sqa{f(X)}}\\
&  \quad  = \abs{ \EE'\sqa{ f(X_1, \ldots, X_i, X'_{i+1}, \ldots X'_n)} - \EE'\sqa{ f(X_1, \ldots, X_{i-1}, X'_i,  \ldots X'_n)}} \\
& \quad \le \EE'\sqa{ \abs{ f(X_1, \ldots, X_i, X'_{i+1}, \ldots X'_n) -  f(X_1, \ldots, X_{i-1}, X'_i,  \ldots X'_n)}}\\
& \quad \le \EE'\sqa{ g_i (X_1,\ldots, X_i, X'_{i+1}, \ldots, X'_n) + g_i (X_1,\ldots, X_{i-1}, X'_i,  \ldots, X'_n)}\\
& \quad = \EE_{ i} \sqa{g_i} +\EE_{ {i-1}} \sqa{g_i}. 
\end{split}
\]
Using these inequalities in \eqref{eq:burkholder} yields
\[ \begin{split}  \EE\sqa{  | f(X) - \EE\sqa{f(X)}|^q }  & \le C(q) \EE\sqa{ \bra{ \sum_{i=1}^n  \bra{\EE_{ i} \sqa{g_i} +\EE_{ {i-1}} \sqa{g_i} }^2 }^{q/2}}\\
& \le C(q) n^{q/2-1} \sum_{i=1}^n \EE\sqa{ \bra{\EE_{ i} \sqa{g_i} +\EE_{ {i-1}} \sqa{g_i} }^q}\\
& \le \tilde{C}(q)  n^{q/2-1} \sum_{i=1}^n \EE\sqa{ \EE_{ i} \sqa{g_i}^q+\EE_{ {i-1}} \sqa{g_i}^q }\\
& \le 2 \tilde{C}(q) n^{q/2-1}  \sum_{i=1}^n\EE\sqa{ g_i^q},
\end{split}\]
hence the thesis.
\end{proof}
\end{lemma}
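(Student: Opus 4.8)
The plan is to run the classical Doob martingale argument behind McDiarmid's inequality, but to replace the exponential/Azuma step with a Burkholder--Gundy $L^q$ estimate, which is precisely the device that lets the ``Sobolev''-type hypothesis $\abs{f(x)-f(x')}\le g_i(x)+g_i(x')$ be used instead of a uniform Lipschitz bound. Write $\EE_i$ for the conditional expectation given $(X_1,\dots,X_i)$, so $\EE_0=\EE$ and $\EE_n$ is the identity, and decompose
\[ f(X)-\EE\sqa{f(X)} = \sum_{i=1}^n \bra{\EE_i\sqa{f(X)}-\EE_{i-1}\sqa{f(X)}}, \]
a sum of martingale increments. By Burkholder's inequality there is $C(q)>0$, independent of $n$, with
\[ \EE\sqa{\abs{f(X)-\EE\sqa{f(X)}}^q}\le C(q)\,\EE\sqa{\bra{\sum_{i=1}^n \abs{\EE_i\sqa{f(X)}-\EE_{i-1}\sqa{f(X)}}^2}^{q/2}}. \]

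Next I would bound each increment pointwise by $\EE_i\sqa{g_i(X)}+\EE_{i-1}\sqa{g_i(X)}$. For this, introduce an independent copy $(X_i')_{i=1}^n$ of the family and use independence to represent $\EE_i\sqa{f(X)}=\EE'\sqa{f(X_1,\dots,X_i,X'_{i+1},\dots,X'_n)}$ and $\EE_{i-1}\sqa{f(X)}=\EE'\sqa{f(X_1,\dots,X_{i-1},X'_i,X'_{i+1},\dots,X'_n)}$, where $\EE'$ integrates the primed variables. The two arguments of $f$ differ only in the $i$-th coordinate, so applying the hypothesis inside $\EE'$ and using Jensen gives the bound; the same independence identity that represents $\EE_i[f]$ also represents $\EE_i[g_i(X)]$ and $\EE_{i-1}[g_i(X)]$, so the two error terms reassemble into $\EE_i\sqa{g_i(X)}+\EE_{i-1}\sqa{g_i(X)}$.

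Finally I would combine the pieces: the power-mean inequality $\bra{\sum_{i=1}^n a_i^2}^{q/2}\le n^{q/2-1}\sum_{i=1}^n a_i^q$ (valid since $q\ge 2$), then $(\EE_i[g_i]+\EE_{i-1}[g_i])^q\le 2^{q-1}(\EE_i[g_i]^q+\EE_{i-1}[g_i]^q)$, and then Jensen for conditional expectations, $\EE\sqa{\EE_i\sqa{g_i(X)}^q}\le \EE\sqa{g_i(X)^q}$, which together yield
\[ \EE\sqa{\abs{f(X)-\EE\sqa{f(X)}}^q}\le \tilde C(q)\,n^{q/2-1}\sum_{i=1}^n\EE\sqa{g_i(X)^q}, \]
as claimed. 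The one genuinely delicate point — the step I expect to be the main obstacle — is the coupling that bounds the martingale increments: one has to set up the swap of the $i$-th coordinate so that the hypothesis applies cleanly despite $g_i$ depending on \emph{all} coordinates, and check that the resulting bound is exactly $\EE_i[g_i]+\EE_{i-1}[g_i]$ rather than something larger. Everything else (the Burkholder constant being $n$-free, the elementary convexity inequalities) is routine bookkeeping.
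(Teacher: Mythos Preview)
Your proposal is correct and matches the paper's proof essentially step for step: the same Doob martingale decomposition, the same Burkholder--Gundy inequality, the same independent-copy representation of $\EE_i[f(X)]$ and $\EE_{i-1}[f(X)]$ to bound the $i$-th increment by $\EE_i[g_i]+\EE_{i-1}[g_i]$, and the same chain of convexity inequalities (power-mean, $(a+b)^q\le 2^{q-1}(a^q+b^q)$, conditional Jensen) to finish. The ``delicate point'' you flag is handled exactly as you describe, and the paper treats it no differently.
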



\printbibliography

 \end{document}